\newcommand{\vect}[1]{\boldsymbol{#1}}
\newcommand{\mat}[1]{\boldsymbol{#1}}
\newcommand{\wt}[1]{\widetilde{#1}}
\renewcommand{\eqref}[1]{Eq.~(\ref{#1})}  
\DeclareMathOperator{\sign}{\mathrm{sign} \,}
\DeclareMathOperator{\degr}{\mathrm{deg}}
\DeclareMathOperator{\diag}{\mathrm{diag}}
\DeclareMathOperator{\intr}{\mathrm{Int}}
\newtheorem{remark}{Remark}
\newtheorem{theorem}{Theorem}
\newtheorem{lemma}{Lemma}
\newtheorem{definition}{Definition}
\newtheorem{proposition}{Proposition}
\begin{document}

\title{Applications of the Poincar\'e--Hopf Theorem: Epidemic Models and Lotka--Volterra Systems}
%
%
%

\author{\IEEEauthorblockN{Mengbin Ye, \emph{Member, IEEE} $\;$ }
\and
\IEEEauthorblockN{Ji Liu, \emph{Member, IEEE} $\; $ }
\and 
\IEEEauthorblockN{Brian D.O. Anderson, \emph{Life Fellow, IEEE} $\;$\\ }
\and 
\IEEEauthorblockN{Ming Cao, \emph{Senior Member, IEEE $\;$ }
}

\thanks{Ye and Cao were supported in part by the European Research Council (ERC-CoG-771687) and the Netherlands Organization for Scientific Research (NWO-vidi-14134). Anderson was supported by the Australian Research Council under grant \mbox{DP160104500} and DP190100887, and by Data61-CSIRO. 

M. Ye is with the Optus--Curtin Centre of Excellence in Artificial Intelligence, Curtin University, Perth, Australia.

M. Cao is with the Faculty of Science and Engineering, ENTEG, University of Groningen, Groningen 9747 AG, Netherlands. 

B.D.O.~Anderson is with the Research School of Electrical, Energy and Material Engineering, Australian National University, Canberra, Australia, and also with Hangzhou Dianzi University, China, and Data61-CSIRO in Canberra, Australia. 

J.~Liu is with the Department of Electrical and Computer Engineering, Stony Brook University.

\texttt{mengbin.ye@curtin.edu.au, m.cao@rug.nl, brian.anderson@anu.edu.au, ji.liu@stonybrook.edu}.}

}

\maketitle

\begin{abstract} This paper focuses on properties of equilibria and their associated regions of attraction for continuous-time nonlinear dynamical systems. The classical Poincar\'e--Hopf Theorem is used to derive a general result providing a sufficient condition for the existence of a unique equilibrium for the system. The condition involves the Jacobian of the system at possible equilibria, and ensures the system is in fact locally exponentially stable. We show how to apply this result to the deterministic susceptible-infected-susceptible (SIS) networked model, and a nonlinear Lotka--Volterra system. We use the result further to extend the SIS model via the introduction of a broad class of decentralised feedback controllers, which significantly change the system dynamics, rendering existing Lyapunov-based approaches to analysis of the system invalid. Using the Poincar\'e--Hopf based approach, we identify a necessary and sufficient condition under which the controlled SIS system has a unique nonzero equilibrium (a diseased steady-state), and we show using monotone systems theory that this nonzero equilibrium is attractive for all nonzero initial conditions.
A counterpart sufficient condition for the existence of a unique equilibrium for a nonlinear discrete-time dynamical system is also presented.
\end{abstract}

\begin{IEEEkeywords}
complex networks, differential topology, feedback control, monotone systems
\end{IEEEkeywords}

%
\IEEEpeerreviewmaketitle

\section{Introduction}\label{sec:intro}
%
%
%
%

\IEEEPARstart{M}{any} dynamical processes in the natural sciences can be studied as continuous-time systems of the form 
\begin{equation}\label{eq:general_system}
\dot{x} = f(x)
\end{equation}
where $f(\cdot)$ is a suitably smooth nonlinear vector-valued function, and $x = [x_1, \hdots, x_n]^\top$ represents a vector of $n\geq 1$ biological, chemical, or physical variables. In the course of conducting analysis on such models, it is often of interest to characterise the equilibria of \eqref{eq:general_system}, including the number, stability properties and associated regions of attraction. In context, there is usually (but not always) an equilibrium at $x = \vect 0_n$, where $\vect 0_n$ is the $n$-dimensional vector of all zeros, reflecting the situation where the modelled process has ceased completely, and we call it the trivial equilibrium. There is obvious interest to determine if there exist non-trivial equilibria of \eqref{eq:general_system}, and how many. One particular focus may be to determine conditions on $f$ such that \eqref{eq:general_system} has a \textit{unique} non-trivial equilibrium (if in fact any such conditions exist).


Suppose that one has an intuition perhaps obtained from extensive simulations that the particular \eqref{eq:general_system} system of interest has a unique non-trivial equilibrium, call it $x^*$. Then, \textit{the existence and uniqueness} of $x^*$ might be proved by analysis using algebraic calculations involving the particular $f(\cdot)$ of interest. If $f(\cdot)$ is highly nonlinear, or $n$ is large (e.g. \eqref{eq:general_system} is modelling a complex networked system), a proof of the uniqueness of $x^*$ reliant on the algebraic form of the specific $f$ may be extremely complicated. Some systems admit Lyapunov functions that simultaneously establish that $x^*$ is unique and that it is globally attractive~\cite{shuai2013epidemic_lyapunov}; this approach was also applied to several classes of coupled systems of differential equations over networks~\cite{li2010global_network}. However, such an approach is not applicable for systems, including those in the natural sciences, that exhibit limit cycles or chaos~\cite{takeuchi1996global,smale1976differential}.

Moreover, one may wish to modify some \eqref{eq:general_system} system by introducing additional nonlinearities, and obtain a new system $\dot{x} = \bar f(x)$. For example, and as we shall do in this paper, one may insert a feedback control $u(x)$ to ensure the closed-loop system $\dot{x} = f(x) + u(x)$ achieves some control objective. Alternatively, $f(\cdot)$ may have been obtained by making idealised assumptions of the process being modelled, and one wishes to relax or change these assumptions to better reflect the real world, resulting in a new system. Suppose that one were again interested in determining whether $\dot{x} = \bar f(x)$ had a unique non-trivial equilibrium, call it $\bar x^*$. A logical approach would be to extend the analysis method for the unique equilibrium $x^*$ for \eqref{eq:general_system} to consider $\bar{f}(\cdot)$. However, approaches relying on algebraic calculations using the specific $f(\cdot)$ may not be general enough to guarantee successful adaptation for the various modified $\bar{f}(x)$. Moreover, a Lyapunov function that works for \eqref{eq:general_system} may not work with $\dot{x} = \bar f(x)$, and finding a new Lyapunov function may prove challenging. 

Motivated by the above observations, this paper seeks to identify sufficient conditions for a general system of the form of  \eqref{eq:general_system} to have a unique equilibrium, involving as few calculations of the specific $f(\cdot)$ as possible. Once the existence of a unique equilibrium has been established, Lyapunov or other dynamical systems theory tools (as will be the case in this paper) can be used to identify regions of convergence.

\subsection{Contributions of This Paper}

There are several contributions of this paper, which we now detail. First, we use the classical Poincar\'e--Hopf Theorem \cite{milnor1997topology} from differential topology to derive a sufficient condition that simultaneously establishes the existence \textit{and} uniqueness of the equilibrium for a general nonlinear system \eqref{eq:general_system}, and that the equilibrium is locally exponentially stable. One can consider our result to be a specialisation of the Poincar\'e--Hopf Theorem. No conclusions are drawn on the existence or nonexistence of limit cycles or chaotic behaviour, though additional tools described later in the paper can establish such conclusions. Some existing works have used the Poincar\'e--Hopf Theorem to count equilibria, but typically focus on a specific system of interest within a specific applications domain (including sometimes static as opposed to dynamical systems) \cite{belabbas2013_formationstable,moghadas2004_PHepidemics,simsek2007generalized_PH,tang2007_PHcongestion,christensen2017PH_potentialgames,henshaw2015_SVIR_PoincareHopf,konovalov2010price_PH,rijk1983equilibrium,varian1975ph_unique}. We then apply the result to three example systems from the natural sciences. While these example systems are all positive systems, i.e. $x_i(t) \geq 0$ for all $i = 1, \hdots, n$ and $t \geq 0$, our result can be applied to many general nonlinear systems, with no restriction on the signs of the states.

Key to our approach is to check whether the Jacobian of $f(\cdot)$ in \eqref{eq:general_system} at every possible equilibrium is stable, though no a priori knowledge is needed that an equilibrium even exists. While computation of the Jacobian does require some knowledge of the algebraic form of $f(\cdot)$, we have found that in applying our approach to established models of biological systems, the level of complexity in calculations based on the specific algebraic form of $f(\cdot)$ is significantly reduced. 

The first example system we study is the deterministic Susceptible-Infected-Susceptible (SIS) network model for an epidemic spreading process. There is a well known necessary and sufficient condition for the SIS model to have a unique non-trivial \textit{endemic} equilibrium (which corresponds to the disease being present in the network) in addition to the trivial \textit{healthy} equilibrium (which corresponds to a disease-free network) \cite{lajmanovich1976SISnetwork,fall2007SIS_model,khanafer2016SIS_positivesystems,mei2017epidemics_review,vanMeighem2009_virus}. 
We show how the existence and uniqueness of this endemic equilibrium, based on this known condition, can be easily established using our aforementioned specialisation of the Poincar\'e--Hopf Theorem.

Next, we introduce decentralised feedback controllers into the SIS network model as our second example, with the objective of globally stabilising the controlled SIS network to the healthy equilibrium. The equations for the controlled system are no longer quadratic, as they were for the uncontrolled system and existing approaches, including those based on Lyapunov theory, cannot be extended to consider the controlled system.  
In contrast, we show that the Poincar\'e--Hopf based approach admits a direct and rather straightforward extension from the uncontrolled SIS system to the controlled SIS system. This allows us to prove that the controlled system has a unique endemic equilibrium, which is locally exponentially stable, if and only if the uncontrolled system has a unique endemic equilibrium.
We then appeal to results from monotone systems theory \cite{smith1988monotone_survey,smith2008monotone_book} to prove that the unique endemic equilibrium is in fact asymptotically stable for all feasible nonzero initial conditions. 
Our analysis covers a broad class of controllers, significantly extending a special case in \cite{liu2019bivirus}.

Last, we apply the specialisation of the Poincar\'e--Hopf Theorem to generalised nonlinear Lotka--Volterra systems first studied in \cite{goh1978sector}, which are popular for modelling the interaction of populations of biological species \cite{takeuchi1996global}. We use the Poincar\'e--Hopf approach to relax the sufficient condition of \cite{goh1978sector} for ensuring the existence of a unique  non-trivial equilibrium (and establish that it is locally exponentially stable). Limit cycles and chaotic behaviour, arising in many Lotka--Volterra systems, are not ruled out. Taking the same condition as in \cite{goh1978sector}, we then recover the global convergence result of \cite{goh1978sector} but with a simplified argument.
	
Naturally, one may also wish to consider nonlinear discrete-time  systems $x(k) = G(x(k)), \,k = 0, 1, \hdots$. It turns out that there is a counterpart condition for establishing existence and uniqueness of the equilibrium, which was first reported in \cite{anderson2017_lefschetzECC_arxiv}, and is established using the Lefschetz--Hopf Theorem \cite{hirsch2012differential}. In this paper, we recall the discrete-time result of \cite{anderson2017_lefschetzECC_arxiv} and its application to the DeGroot--Friedkin model of a social network \cite{jia2015opinion_SIAM}, and compare it against the result we derived for \eqref{eq:general_system}.
	
A preliminary version of this paper has been accepted in the 21st IFAC World Congress \cite{ye2020_IFAC_impossible}, covering limited results on the controlled SIS network model. This paper provides more material on the Poincar\'e--Hopf Theorem specialisation and its motivations, development of monotone systems theory, results on generalised Lotka--Volterra systems, and discussion of the discrete-time counterpart.

The rest of the paper is structured as follows. In Section~\ref{section:background_problem}, we provide relevant mathematical notation and preliminaries, and an explicit motivating example with the network SIS model. Section~\ref{sec:PH_general} introduces the Poincar\'e--Hopf Theorem, and the specialisation for application to general nonlinear systems. This specialisation is applied to the network SIS model in Section~\ref{sec:epidemics}, and Lotka--Volterra models in Section~\ref{sec:other}. The discrete-time result is covered in Section~\ref{sec:DT}, and conclusions are given in Section~\ref{sec:conclusion}.

\section{Background and Preliminaries}\label{section:background_problem}

\subsection{Notation}\label{ssec:notation}
To begin, we establish some mathematical notation. The $n$-column vector of all ones and zeros is given by $\vect{1}_n$ and $\vect{0}_n$, respectively. The $n\times n$ identity and $n\times m$ zero matrices are given by $I_n$ and $\mat 0_{n\times m}$, respectively. For a vector $a$ and matrix $A$, we denote the $i^{th}$ entry of $a$ and $(i,j)^{th}$ entry of $A$ as $a_i$ and $a_{ij}$, respectively. For any two vectors $a, b\in\mathbb{R}^n$, we write $a\geq  b$ and $a > b$ if $a_{i}\geq b_{i}$ and $a_{i}> b_{i}$, respectively, for all $i\in \{1, \hdots, n\}$. 
A real matrix $A \in \mathbb{R}^{n\times m}$ is said to be nonnegative or positive if $A \geq \mat 0_{n\times m}$ or $A > \mat 0_{n\times m}$, respectively. 

For a real square matrix $M$ with spectrum $\sigma(M)$, we use $\rho(M) = \max \left\{|\lambda|\ : \ \lambda\in\sigma(M)\right\}$ and 
$s(M) = \max \left\{{\rm Re}(\lambda)\ : \ \lambda\in\sigma(M)\right\}$ to denote the spectral radius of $M$ and the largest real part among the eigenvalues of $M$, respectively. A matrix $M$ is said to be \textit{Hurwitz} if $s(M) < 0$.

The Euclidean norm is $\Vert \cdot \Vert$, and the $(m-1)$-dimensional sphere embedded in $\mathbb{R}^m$ is denoted by $\mathcal{S}^{m-1}$. For a set $\mathcal{M}$ with boundary, we denote the boundary as $\partial\mathcal{M}$, and the interior $\intr(\mathcal{M}) \triangleq \mathcal{M}\setminus \partial\mathcal{M}$. We define the set 
\begin{equation}\label{eq:Xi}
	\Xi_n = \{x \in \mathbb{R}^n : 0 \leq x_i \leq 1, i \in \{1, \hdots, n\}\},
\end{equation}
and denote by $\mathbb{R}^n_{\geq 0} = \{x \in \mathbb{R}^n : x_i \geq 0\,,\forall\,i=1, \hdots, n\}$ and $\mathbb{R}^n_{> 0}=  \{x \in \mathbb{R}^n: x_i > 0\,,\forall\,i=1, \hdots, n\}$ the positive orthant and the interior of the positive orthant, respectively.

\subsection{Graph Theory}
For a directed graph $\mathcal{G} = (\mathcal{V}, \mathcal{E}, A)$, $\mathcal{V} = \{1, \hdots, n\}$ is the set of vertices (or nodes). The set of directed edges is given by $\mathcal{E}\subseteq \mathcal{V} \times \mathcal{V}$ and the edge $(i, j)$ is an arc that is incoming with respect to $j$ and outgoing with respect to $i$. The  matrix $A$ is defined such that $(i, j) \in \mathcal{E}$ if and only if $a_{ji} \neq 0$. We will sometimes write ``the matrix $A$ associated with $\mathcal{G}$'', or write $\mathcal{G}[A]$ to represent $\mathcal{G} = (\mathcal{V}, \mathcal{E}, A)$. We define the neighbour set of $i$ as $\mathcal{N}_i \triangleq \{j : e_{ji} \in \mathcal{E}\}$.
A directed path is a sequence of edges $(p_1, p_2), (p_2, p_3), ...,$ where $p_i \in \mathcal{V}$ are distinct and $(p_{i}, p_{i+1}) \in \mathcal{E}$. A graph $\mathcal{G}[A]$ is \emph{strongly connected} if and only if there is a path from every node to every other node, which is equivalent to $A$ being \emph{irreducible} \cite{berman1979nonnegative_matrices}.

\subsection{A Motivating Example: The Network SIS Model}\label{ssec:SIS_intro}
To more explicitly motivate the application of the Poincar\'e--Hopf Theorem, we introduce the first of several examples studied in this paper, viz. the network Susceptible-Infected-Susceptible (SIS) model \cite{lajmanovich1976SISnetwork}, which is a fundamental model in the deterministic epidemic modelling literature. To remain concise, we do not discuss the modelling derivations for which details are found in e.g. \cite{lajmanovich1976SISnetwork,nowzari2016epidemics}.

For some disease of interest, it is assumed that each individual is either Infected (I) with the disease, or is Susceptible (S) but not infected, and the individual can transition between the two states. Each individual resides in a population of large and constant size, and there is a metapopulation network of such populations, captured by a graph $\mathcal{G} = (\mathcal{V}, \mathcal{E}, B)$ with $n \geq 2$ nodes, where each node represents a population. Associated with $i \in \mathcal{V}$ is the variable $x_i(t) \in [0, 1]$, which represents the proportion of population $i$ that is Infected (and thus $1-x_i(t)$ represents the proportion of population $i$ that is Susceptible). The SIS dynamics for $x_i(t)$ are given by 
\begin{equation}\label{eq:SIS_contact_node}
\dot{x}_i(t) = - d_i x_i(t) + (1-x_i(t)) \sum_{j \in \mathcal{N}_i} b_{ij} x_j(t),
\end{equation}
where $d_i > 0$ is the recovery rate of node $i$, and for a node $j$ that is a neighbour of node $i$, i.e. $j\in \mathcal{N}_i$, $b_{ij} > 0$ is the infection rate from node $j$ to node $i$. If $j \notin \mathcal{N}_i$, then $b_{ij} = 0$. Defining $x = [x_1, \hdots, x_n]^\top \in \mathbb{R}^n$, one obtains 
\begin{equation}\label{eq:SIS_contact_network}
\dot{x}(t) = (- D  + B -X(t)B)x(t),
\end{equation}
with $X(t) = \diag(x_1(t), \hdots, x_n(t))$, $D = \diag(d_1, \hdots, d_n)$ being diagonal matrices. The matrix $B \geq \mat 0_{n\times n}$ is associated with the graph $\mathcal{G}$. With $\Xi_n$ defined in \eqref{eq:Xi}, and under the intuitively reasonable assumption that $x(0)\in \Xi_n$, one can prove that $x(t) \in \Xi_n$ for all $t \geq 0$, which means the dynamics in \eqref{eq:SIS_contact_network} are well defined and $x(t)$ retains its important physical meaning for all $t \geq 0$. Thus,  $\Xi_n$ is considered as the set of feasible initial conditions for \eqref{eq:SIS_contact_network}.

Obviously, $\vect 0_n$ is an equilibrium of \eqref{eq:SIS_contact_network}, termed the \textit{healthy} (or trivial) equilibrium. Any other equilibrium $x^* \in \Xi_n\setminus \vect 0_n$ is an \textit{endemic} equilibrium, as the disease persists in at least one node. The following result completely characterises the equilibria and the limiting behaviour of \eqref{eq:SIS_contact_network}.
\begin{proposition}[\hspace{-0.3pt}\cite{lajmanovich1976SISnetwork,fall2007SIS_model}]\label{prop:SIS_network_convergence}
Consider \eqref{eq:SIS_contact_network}, and suppose that $\mathcal{G}[B]$ is strongly connected. With $\Xi_n$ defined in \eqref{eq:Xi}, the following hold
\begin{enumerate}
	\item If $s(-D + B) \leq 0$, then $x =\vect 0_n$ is the unique equilibrium of \eqref{eq:SIS_contact_network}, and $\lim_{t\to\infty} x(t) = \vect 0_n$ for all $x(0) \in \Xi_n$.
	\item If $s(-D + B) > 0$, then in addition to the equilibrium $x =\vect 0_n$, there is a unique endemic equilibrium $x^* \in \intr(\Xi_n)$. Moreover, $\lim_{t\to\infty} x(t) = x^*$ for all $x(0) \in \Xi_n\setminus \vect 0_n$.
\end{enumerate}
\end{proposition}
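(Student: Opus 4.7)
My plan is to split the result into its two cases and handle both using a combination of Perron--Frobenius analysis of the Metzler matrix $-D + B$, a simple linear Lyapunov function, and (for Part~2) the cooperative/monotone structure of the flow, with the uniqueness-plus-local-stability conclusion deferred to the Poincar\'e--Hopf specialisation that the rest of the paper develops.

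First I would verify that $\Xi_n$ is forward invariant: on the face $\{x_i = 0\}$ we have $\dot x_i = \sum_{j \in \mathcal{N}_i} b_{ij} x_j \geq 0$, and on $\{x_i = 1\}$ we have $\dot x_i = -d_i < 0$, so no trajectory initialised in $\Xi_n$ can escape. The Jacobian of \eqref{eq:SIS_contact_network} is $J(x) = -D + (I - X) B - \diag(Bx)$, whose off-diagonal $(i,j)$-entry is $(1 - x_i) b_{ij} \geq 0$ on $\Xi_n$, so the flow is cooperative; strong connectivity of $\mathcal{G}[B]$ upgrades this to strong monotonicity on $\intr(\Xi_n)$.

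For Part~1 with $s(-D + B) < 0$, let $v^\top > 0$ be a left Perron eigenvector of $-D + B$ and set $V(x) = v^\top x$. Then
\begin{equation*}
\dot V \;=\; s(-D+B)\, v^\top x \;-\; v^\top X B x \;\leq\; s(-D+B)\, V,
\end{equation*}
so $V(x(t)) \to 0$ exponentially and, since $v > 0$, $x(t) \to \vect 0_n$. When $s(-D+B) = 0$, the same $V$ is non-increasing and $\dot V = -v^\top X B x$; LaSalle's invariance principle combined with irreducibility of $B$ pins the $\omega$-limit set of any trajectory at $\{\vect 0_n\}$, and a separate Perron--Frobenius argument applied to the nonnegative matrix $D^{-1} B$ rules out any endemic equilibrium in this regime (otherwise $D x^* \leq B x^*$ would force $s(-D + B) > 0$).

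For Part~2 with $s(-D+B) > 0$, let $w > 0$ be a right Perron eigenvector of $-D + B$. Then $f(\varepsilon w) = \varepsilon\, s(-D+B)\, w - \varepsilon^2 \diag(w) B w > \vect 0_n$ for small $\varepsilon > 0$, so $\varepsilon w$ is a strict subsolution; the order interval $[\varepsilon w, \vect 1_n]$ is forward invariant, and Brouwer applied to the time-$T$ map delivers an endemic equilibrium $x^* \in \intr(\Xi_n)$. Uniqueness and local exponential stability of $x^*$ are exactly what the forthcoming Poincar\'e--Hopf specialisation is designed to certify; the only hypothesis to verify is that $J(x^*)$ is Hurwitz, and this follows cheaply from the equilibrium identity $(I - X^*) B x^* = D x^*$, which gives $J(x^*)\, x^* = -X^* B x^* < \vect 0_n$, so the Metzler matrix $J(x^*)$ admits a strictly positive vector sent strictly negative, making $-J(x^*)$ a nonsingular M-matrix. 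The main obstacle is the \emph{global} convergence claim on $\Xi_n \setminus \{\vect 0_n\}$, which no local spectral information alone can deliver: here I would exploit strong monotonicity to push an arbitrary nonzero initial condition into $\intr(\Xi_n)$ after a short time, sandwich the resulting state between $\varepsilon w$ (from below) and $(1-\delta) \vect 1_n$ (from above), and use the fact that the trajectories from these two sandwich points are respectively monotone non-decreasing and monotone non-increasing in the componentwise order and bounded in $\Xi_n$, hence convergent to equilibria by the monotone convergence theorem; since $\vect 0_n$ is unstable under $s(-D+B) > 0$ and $x^*$ is the only other equilibrium, both bracketing trajectories (and therefore the sandwiched one) must converge to $x^*$.
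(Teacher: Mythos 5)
Your proposal is correct, and it is worth noting that the paper does not actually prove this proposition in situ --- it is imported from \cite{lajmanovich1976SISnetwork,fall2007SIS_model} --- so the relevant comparison is with the paper's later re-derivation of its key parts: Theorem~\ref{thm:SIS_ph_unique} (Poincar\'e--Hopf for existence, uniqueness and local exponential stability of $x^*$) and Theorems~\ref{thm:healthy}--\ref{thm:impossibility} (the $s(-D+B)\leq 0$ case and global convergence, for the controlled system which subsumes the uncontrolled one with $h_i\equiv 0$). You use the same two pillars but implement them differently in three places. For Part~1 you use the left-Perron-eigenvector Lyapunov function $V=v^\top x$ plus LaSalle, where the paper's Theorem~\ref{thm:healthy} instead compares against the linear system $\dot y=(-D+B)y$; both are standard and the Lyapunov route has the mild advantage of giving an explicit decay rate when $s<0$. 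For the Hurwitz hypothesis of Theorem~\ref{thm:unique}, your one-line computation $df_{x^*}x^*=-X^*Bx^*<\vect 0_n$ combined with Lemma~\ref{lem:metzler} is genuinely slicker than the paper's route through singular irreducible $M$-matrices and Lemma~\ref{lem:M_matrix_singular_irr}, and reaches the same conclusion. For global convergence you use the classical sub-/supersolution sandwich $\varepsilon w\leq x(t_0)\leq(1-\delta)\vect 1_n$ with monotone bracketing trajectories, whereas the paper invokes Proposition~\ref{prop:conv} (generic convergence for irreducible monotone systems plus exclusion of limit cycles given a unique equilibrium); your sandwich is more elementary and self-contained, while the paper's route generalises more readily when the invariant set is not an order interval. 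The only spots that deserve one more line are (i) the fact that any nonzero equilibrium is strictly positive (needed both to get $X^*Bx^*\neq\vect 0_n$ when ruling out endemic equilibria at $s=0$, and to make the strict inequality $J(x^*)x^*<\vect 0_n$ strict in every component), which follows from strong connectivity exactly as in the paper's Lemma~\ref{lem:positive_invariance}, and (ii) the uniform choice of $\varepsilon,\delta$ so that the pushed-in state is actually bracketed; neither is a real gap.
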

A key conclusion of Proposition~\ref{prop:SIS_network_convergence} is that there is an endemic equilibrium $x^* \in \intr(\Xi_n)$ if and only if $s(-D+B) > 0$, and then in fact $x^*$ is the unique endemic equilibrium. In some approaches, the uniqueness of $x^*$ is first proved before construction of a Lyapunov function to establish convergence to $x^*$~\cite{fall2007SIS_model,lajmanovich1976SISnetwork,mei2017epidemics_review,khanafer2016SIS_positivesystems}. Other works construct a different Lyapunov function that can prove uniqueness of $x^*$ and convergence simultaneously~\cite{shuai2013epidemic_lyapunov}. In Section~\ref{sec:epidemics}, we will illustrate the effectiveness of the proposed approach of this paper, which relies on the Poincar\'e--Hopf Theorem. Importantly, we show in Section~\ref{ssec:feedback} how our method can be easily extended to study a modified version of \eqref{eq:SIS_contact_node} that incorporates decentralised feedback control, while there are no obvious paths to extend the aforementioned Lyapunov-based methods.
%

\section{Application of the Poincar\'e-Hopf Theorem For A Class of Nonlinear Systems}\label{sec:PH_general}


Before introducing the main result of this section, which is one of the key novel contributions of this paper, we first detail the notion of a \textit{tangent cone}~\cite{blanchini1999set_invariance}, and define what is meant by a vector field ``pointing inward'' to a set $\mathcal{M}$, and introduce the relevant aspects of differential topology.
	
Let the distance between a point $x\in \mathbb{R}^n$ and a compact set $\mathcal{Q} \subset \mathbb{R}^n$ be defined as
\begin{equation}
	\text{dist}(x, \mathcal{Q}) = \inf_{y\in\mathcal{Q}} \Vert x - y \Vert.
\end{equation} 
The tangent cone to $\mathcal{Q}$ at $x$ is the set
\begin{equation}\label{eq:tangent_cone}
	\mathcal{Z}_{\mathcal{Q}}(x) = \left\{ z \in \mathbb{R}^n : \lim_{\hspace{3pt}h\to 0}  \inf \frac{\text{dist}(x + hz, \mathcal{Q})}{h} = 0\right\}.
\end{equation}
If $\mathcal{Q}$ has a boundary $\partial\mathcal Q$, the one-sided limit $\lim_{h\to 0^+}$ is used in \eqref{eq:tangent_cone} for $x\in \partial Q$, see~\cite[Appendix D]{ye2019_PH_submit}. This paper will consider \eqref{eq:general_system} on an $m$-dimensional compact manifold $\mathcal{M} \subset \mathbb{R}^n$, with $m \leq n$. It is important to distinguish the dimension of $\mathcal{M}$, viz. $m$, from the dimension of the ambient space $\mathbb{R}^n$ in which $\mathcal{M}$ is embedded, viz. $n$. A natural choice is to embed $\mathcal{M}$ into a space of the dimension of $f$ in \eqref{eq:general_system}, and which we will assume henceforth. For example, if $\mathcal{M}$ is a ball, then clearly $m = n$. If, however, $\mathcal{M}$ is the $(n-1)$-sphere embedded in $\mathbb{R}^n$, then $m = n - 1< n$.

We now relate $\mathcal{Z}_{\mathcal{M}}(x)$ to the tangent space of $\mathcal{M}$ at $x$, denoted by $T_{x}\mathcal{M} \subset \mathbb{R}^m$. For all $y \notin \mathcal{M}$, one has $\mathcal{Z}_{\mathcal{M}}(y) = \emptyset$. However, $\mathcal{Z}_{\mathcal{M}}(x) = T_{x}\mathcal{M} \subset \mathbb{R}^m$ for all $x \in \intr(\mathcal{M})$. That is, the tangent cone at any point $x$ in the interior of $\mathcal{M}$ is equal to the tangent space of $\mathcal{M}$ at the same point, being a Euclidean space with the same dimension as $\mathcal{M}$. If $\mathcal{M}$ has a boundary, then $\mathcal{Z}_{\mathcal{M}}(x) \subset T_{x}\mathcal{M}$ for all $x \in \partial\mathcal{M}$. More specifically, the tangent cone on a boundary point is a subset of the tangent space, comprised of vectors whose directions ``point inward'' to $\mathcal{M}$ (as detailed below). These conclusions are intuitive from a geometric viewpoint, and proved (with additional details) in Appendix~\ref{app:tangent_cone}.
Armed with this knowledge, we now recall the following classical result, and a definition that will be useful in the sequel.
\begin{proposition}[\hspace{-0.3pt}{Nagumo's Theorem \cite[Theorem 3.1]{blanchini1999set_invariance}}]\label{prop:nagumo}
	Consider the system in \eqref{eq:general_system}, and suppose that it has a globally unique solution for every initial condition. Let $\mathcal{M} \subset \mathbb{R}^n$ be an $m$-dimensional compact and smooth manifold, with $m\leq n$. Then, $\mathcal{M}$ is positively invariant for the system if and only if $f(x) \in \mathcal{Z}_{\mathcal{M}}(x)$ for all $x\in\mathcal{M}$.
\end{proposition}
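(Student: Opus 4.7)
The plan is to prove the two implications separately, noting that Nagumo's theorem is a classical result and the task is really to sketch the geometric idea rather than invent anything new.

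For necessity, I would start from the assumption that $\mathcal{M}$ is positively invariant. Fix $x \in \mathcal{M}$, and let $\phi(t,x)$ denote the unique solution of \eqref{eq:general_system} with $\phi(0,x)=x$. Smoothness of $f$ gives the Taylor expansion $\phi(h,x) = x + h f(x) + o(h)$ as $h \to 0^+$, and positive invariance guarantees $\phi(h,x) \in \mathcal{M}$ for all sufficiently small $h>0$. Therefore
\begin{equation*}
\frac{\operatorname{dist}(x + hf(x),\mathcal{M})}{h} \;\leq\; \frac{\Vert x + hf(x) - \phi(h,x)\Vert}{h} \;=\; \frac{o(h)}{h} \;\xrightarrow[h\to 0^+]{}\; 0,
\end{equation*}
which, together with $\operatorname{dist}(\cdot,\mathcal{M})\geq 0$, shows $f(x)\in\mathcal{Z}_{\mathcal{M}}(x)$ as required.

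For sufficiency, I would take $x_0 \in \mathcal{M}$ arbitrary and define $g(t) = \operatorname{dist}(\phi(t,x_0),\mathcal{M})$. Because $\phi(\cdot,x_0)$ is Lipschitz on compact time intervals (the vector field $f$ is smooth and $\mathcal{M}$ is compact) and the distance function to $\mathcal{M}$ is $1$-Lipschitz, $g$ is locally Lipschitz in $t$, with $g(0)=0$ and $g(t)\geq 0$. The goal is then to show the Dini upper derivative satisfies $D^+ g(t) \leq 0$ whenever $g(t)=0$, from which one concludes $g\equiv 0$ and hence positive invariance. At a time $t$ with $\phi(t,x_0)\in\mathcal{M}$, choose a metric projection and exploit the tangent cone hypothesis at that point: the condition $f(\phi(t,x_0))\in\mathcal{Z}_{\mathcal{M}}(\phi(t,x_0))$ implies that for small $h>0$ the displaced point $\phi(t,x_0)+hf(\phi(t,x_0))$ lies within distance $o(h)$ of $\mathcal{M}$, and replacing this displaced point by the true flow $\phi(t+h,x_0)$ only changes it by another $o(h)$, so $g(t+h)=o(h)$, giving $D^+ g(t)\leq 0$.

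The main obstacle is the sufficiency direction, and within it the subtlety is handling points where the trajectory might, a priori, sit arbitrarily close to $\partial \mathcal{M}$: the tangent cone is only defined on $\mathcal{M}$ itself, not on a neighbourhood, so one cannot simply differentiate $g$ via a chain rule on the exterior. The projection argument above circumvents this, but it requires that at every $t$ with $g(t)=0$ the hypothesis on $\mathcal{Z}_{\mathcal{M}}$ can be applied uniformly in a small time window. Because $\mathcal{M}$ is compact and smooth and $f$ is smooth on an open neighbourhood, $f$ is uniformly continuous on a tube around $\mathcal{M}$, which supplies the needed uniformity. I would lean on the properties of $\mathcal{Z}_{\mathcal{M}}$ collected in Appendix~\ref{app:tangent_cone} of the paper, in particular the identification of $\mathcal{Z}_{\mathcal{M}}(x)$ with $T_x\mathcal{M}$ in the interior (where the invariance condition is automatic from standard ODE theory once one works in local coordinates of $\mathcal{M}$) and the boundary characterisation of $\mathcal{Z}_{\mathcal{M}}(x)$ as the inward-pointing cone, which is the only case where the tangent cone condition has bite. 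Since this is a restatement of \cite[Theorem 3.1]{blanchini1999set_invariance}, a full proof would merely cite that reference rather than reprove it.
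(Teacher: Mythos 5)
The paper does not prove this proposition at all: it is imported verbatim as Nagumo's Theorem with a pointer to \cite[Theorem 3.1]{blanchini1999set_invariance}, so your closing remark that a full proof would simply cite that reference is exactly what the paper does. Your necessity argument --- Taylor-expanding the flow, using positive invariance to place $\phi(h,x)$ in $\mathcal{M}$, and bounding $\mathrm{dist}(x+hf(x),\mathcal{M})$ by $\Vert x+hf(x)-\phi(h,x)\Vert=o(h)$ --- is the standard, correct one.

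The sufficiency sketch, however, contains a genuine logical gap. You propose to show $D^+g(t)\le 0$ whenever $g(t)=0$ and to conclude $g\equiv 0$ from this alone. That inference is false: $g(t)=t^2$ satisfies $g(0)=0$, $g\ge 0$, and $D^+g\le 0$ at the only time where $g$ vanishes, yet $g\not\equiv 0$. The classical proof instead establishes a differential inequality valid at \emph{all} times, including those with $g(t)>0$: for $y=\phi(t,x_0)$ in a tube around $\mathcal{M}$, pick a closest point $z\in\mathcal{M}$, use the tangent-cone condition at $z$ to place $z+hf(z)$ within $o(h)$ of $\mathcal{M}$, and use a Lipschitz constant $L$ of $f$ on the tube to get $\Vert (y+hf(y))-(z+hf(z))\Vert\le(1+Lh)\,g(t)$; combining these yields $g(t+h)\le(1+Lh)\,g(t)+o(h)$, hence $D^+g\le Lg$ everywhere, and a Gr\"onwall comparison then forces $g\equiv 0$. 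Your metric-projection idea is the right ingredient, but it must be deployed at points where $g(t)$ is small and positive, and the conclusion must be the linear differential inequality $D^+g\le Lg$ rather than $D^+g\le 0$ on the zero set of $g$.
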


Let $f : U \to \mathbb{R}^n$ be a vector-valued function, where $U \subseteq \mathbb{R}^n$. On a manifold $\mathcal{M}$ of appropriate dimension, a vector field can be represented by $f$, as the mapping $f : \mathcal{M} \to T\mathcal{M}$, where $\mathcal{M} \subseteq U$ and $T\mathcal{M} \subseteq \mathbb{R}^n$.
\begin{definition}[Pointing inward]\label{def:inward}
	Consider a vector field defined by $f : \mathcal{M} \to T\mathcal{M}$, where $\mathcal{M} \subset \mathbb{R}^n$ is an $m$-dimensional compact manifold with boundary $\partial \mathcal{M}$, and $m \leq n$. The vector field is said to point inward to $\mathcal{M}$ at a point $x \in \mathcal{M}$ if
	\begin{equation}\label{eq:point_inward}
		f(x) \in \mathcal{Z}_{\mathcal{M}}(x)\setminus \partial \mathcal{Z}_{\mathcal{M}}(x).
	\end{equation}
\end{definition}
A vector field defined by a vector-valued function $g$ is said to point outward if $f = -g$ satisfies \eqref{eq:point_inward}. Since the vector field is represented by $f$ on $\mathcal{M}$, the phrases ``the vector field points inward'' and ``$f$ points inward'' will in this paper connote the same thing, as defined in Definition~\ref{def:inward}.

\subsection{Algebraic and Differential Topology}\label{ssec:diff_top}

We now introduce some definitions and concepts from topology, and then recall the Poincar\'e-Hopf Theorem. To stay focused on applications to existing models, we do not provide extensive details, which can be found~\cite{milnor1997topology,guillemin2010differential}. 

Consider a smooth map $f : X \to Y$, where $X$ and $Y$ are manifolds. Then, associated with $f$ at a point $x \in X$ is a linear derivative mapping $df_x : T_xX \to T_{f(x)}Y$, where $T_xX$ and $T_{f(x)}Y$ are the tangent space of $X$ at $x \in X$ and $Y$ at $y = f(x) \in Y$, respectively. If the manifold $X$ locally at $x$ looks like $\mathbb{R}^m$, then $df_x$ is simply the Jacobian of $f$ evaluated at $x$ in a local coordinate basis. Suppose that $X$ and $Y$ are of the same dimension. A point $x\in X$ is called a regular point if $df_x$ is nonsingular, and a point $y\in Y$ is called a regular value if $f^{-1}(y)$ contains only regular points. 

Suppose further that $X$ and $Y$ are manifolds of the same dimension without boundary, with $X$ compact and $Y$ connected. The (Brouwer) degree of $f$ at a regular value $y \in Y$ is \cite{milnor1997topology}
\begin{equation}\label{eq:deg_definition}
\degr(f,y) = \sum_{x\in f^{-1}(y)} \sign \det(df_x).
\end{equation}
Here, $\det(df_x)$ is the determinant of $df_x$, and $\sign \det(df_x) = \pm 1$ is simply the sign of the determinant of $df_x$ (note that $y$ being a regular value implies $df_x$ is nonsingular). Notice that $\sign \det(df_x)$ is $+1$ or $-1$ according as $df_x$ preserves or reverses orientation. Remarkably, $\degr(f,y)$ is independent of the choice of regular value $y$ \cite[Theorem A]{milnor1997topology}, and we can thus write the left hand side of \eqref{eq:deg_definition} simply as $\degr(f)$. 

A point $x \in X$ is said to be a zero of $f$ if $f(x) = \vect 0$, and we say that a zero $x$ is isolated if there exists an open ball around $x$ which contains no other zeros. A zero $x$ with nonsingular $df_x$ is said to be nondegenerate, and nonsingularity of $df_x$ is a sufficient condition for $x$ to be isolated. For an isolated zero $x$ of $f$, pick a closed ball $\mathcal{D}$ centred at $x$ such that $x$ is the only zero of $f$ in $\mathcal{D}$. The index of $x$, denoted $\text{ind}_x(f)$, is defined to be the degree of the map
\begin{align*}
u & : \partial\mathcal{D} \to \mathcal{S}^{m-1} \\
z & \mapsto \frac{f(z)}{\Vert f(z)\Vert}.
\end{align*}
If $x$ is a nondegenerate zero, then $\degr(u) = \sign \det(df_x)$ \cite[Lemma 4]{milnor1997topology}. 

Last, for a topological space $X$, we introduce the Euler characteristic $\chi(X)$ \cite{guillemin2010differential,milnor1997topology}, an integer number associated\footnote{While the Euler characteristic can be extended to noncompact $X$, this paper will only consider the Euler characteristic for compact $X$.} with $X$. A key property is that distortion or bending of $X$ (specifically a homotopy) leaves the number invariant. Euler characteristics are known for a great many topological spaces. 

While variations of the Poincar\'e--Hopf Theorem exist, with subtle differences, we now state one which will be sufficient for our purposes.
\begin{proposition}[The Poincar\'e-Hopf Theorem\cite{milnor1997topology}]\label{thm:ph}
	Consider a smooth vector field on a compact $m$-dimensional manifold $\mathcal{M}$, defined by the map $f : \mathcal{M} \to T\mathcal{M}$. If $\mathcal{M}$ has a boundary $\partial\mathcal{M}$, then $f$ must point outward at every point on $\partial\mathcal{M}$. Suppose that every zero $x_i \in \mathcal{M}$ of $f$ is nondegenerate. Then,
	\begin{equation}\label{eq:poincare_hopf_sum}
	\sum_{i} \mathrm{ind}_{x_i}(f) = \sum_i \sign \det(df_{x_i}) = \chi(\mathcal{M}),
	\end{equation}
	where $\chi(\mathcal{M})$ is the Euler characteristic of $\mathcal{M}$.
\end{proposition}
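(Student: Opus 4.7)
The plan is to prove \eqref{eq:poincare_hopf_sum} in two stages: first show that the left-hand side equals the degree of a certain boundary map that is independent of the choice of $f$, then compute that degree for a convenient gradient field using Morse theory. Nondegeneracy of each zero $x_i$ together with the inverse function theorem implies every $x_i$ is isolated, so compactness of $\mathcal{M}$ yields only finitely many zeros in the sum.

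For the first stage, pick disjoint closed coordinate balls $\mathcal{D}_i \subset \intr(\mathcal{M})$ around each zero $x_i$, and set $\mathcal{N} = \mathcal{M}\setminus \bigcup_i \intr(\mathcal{D}_i)$. The normalized field $u = f/\Vert f \Vert$ is then a smooth map $\mathcal{N} \to \mathcal{S}^{m-1}$. The standard fact that the degree of the restriction to $\partial \mathcal{N}$ of any smooth map $\mathcal{N} \to \mathcal{S}^{m-1}$ vanishes (Stokes' theorem applied to the pullback of the volume form), combined with orientation bookkeeping on $\partial\mathcal{N} = \partial\mathcal{M} \sqcup \bigsqcup_i \partial \mathcal{D}_i$, yields
\begin{equation}
\sum_i \mathrm{ind}_{x_i}(f) \;=\; \degr\big(u|_{\partial\mathcal{M}}\big).
\end{equation}
Since $f$ points outward on $\partial\mathcal{M}$ by hypothesis, the restriction $u|_{\partial\mathcal{M}}$ is homotopic through non-vanishing boundary fields to the outward unit normal of $\partial\mathcal{M}$, so the right-hand side depends only on $\mathcal{M}$, not on the particular $f$.

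For the second stage, specialise to $f = \nabla g$ where $g : \mathcal{M} \to \mathbb{R}$ is a Morse function that is constant on $\partial\mathcal{M}$, having $\partial\mathcal{M}$ as a regular maximal level set with $\nabla g$ pointing outward there; existence of such a $g$ is standard. At a critical point $p$ of Morse index $k$, the Hessian has $k$ negative and $m-k$ positive eigenvalues, so $\sign\det(df_p) = (-1)^k$. Morse theory furnishes a CW decomposition of $\mathcal{M}$ with exactly one $k$-cell per critical point of Morse index $k$, and the alternating sum of these cell counts is $\chi(\mathcal{M})$ by the Euler--Poincar\'e formula. Hence $\sum_i \sign\det(df_{x_i}) = \chi(\mathcal{M})$ for this particular $f$; combined with the independence result from the first stage, the identity then holds for every admissible $f$.

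The hard part will be the orientation bookkeeping in the first stage: each $\partial\mathcal{D}_i$ appears in $\partial\mathcal{N}$ with the orientation opposite to the one used in the definition of $\mathrm{ind}_{x_i}(f)$, so justifying the sign in the boundary-degree identity requires careful attention to the conventions tying $\mathcal{D}_i$'s outward normal to $\mathcal{N}$'s inward normal. A secondary technicality is the existence of an adapted Morse function with the prescribed boundary behaviour, which can be arranged either by slightly extending $\mathcal{M}$ past $\partial\mathcal{M}$ and choosing a Morse function on the extension, or by directly invoking the standard theory of Morse functions on manifolds with boundary.
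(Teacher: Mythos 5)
The paper does not prove this proposition---it is stated as a classical result and cited directly to Milnor---so there is no in-paper argument to compare against; your proposal must stand on its own. Your overall route (show the index sum is an invariant of $\mathcal{M}$ via a boundary-degree argument, then evaluate it on the gradient of a Morse function) is the classical one, and your second stage is sound: at a nondegenerate critical point of Morse index $k$ the linearisation of $\nabla g$ has the same signature as the Hessian, so $\sign\det(df_p)=(-1)^k$, and the Euler--Poincar\'e formula for the Morse CW decomposition gives $\chi(\mathcal{M})$.

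The genuine gap is at the very first step of stage one. You declare that $u=f/\Vert f\Vert$ is ``a smooth map $\mathcal{N}\to\mathcal{S}^{m-1}$,'' but $f(x)$ lives in $T_x\mathcal{M}$, which varies with $x$; $u$ is a section of the unit tangent sphere bundle, not a map into a fixed sphere. Turning it into such a map requires a global trivialisation of $T\mathcal{M}$ over $\mathcal{N}=\mathcal{M}\setminus\bigcup_i\intr(\mathcal{D}_i)$, which does not exist in general (e.g.\ $\mathbb{CP}^2$ minus an open ball, whose tangent bundle is nontrivial because $p_1\neq 0$). Without it, neither the Stokes-theorem lemma ``a map of $\mathcal{N}$ into $\mathcal{S}^{m-1}$ has boundary degree zero'' nor the quantity $\degr(u|_{\partial\mathcal{M}})$ is even defined, so the independence-of-$f$ conclusion does not follow as written. (Your argument is perfectly fine for the case the paper actually uses, namely a full-dimensional compact domain $\mathcal{M}\subset\mathbb{R}^m$ such as the smoothed hypercube $\tilde{\mathcal{M}}_\epsilon$, where the ambient coordinates provide the trivialisation; it also tacitly assumes orientability, which the general statement does not.) The standard repair is Milnor's: embed $\mathcal{M}$ in some $\mathbb{R}^k$, pass to a closed tubular neighbourhood $N_\epsilon$, extend $f$ to a vector field on $N_\epsilon$ with the same zeros and indices that points outward on $\partial N_\epsilon$, and only then run your Stokes/degree argument on the full-dimensional domain $N_\epsilon\subset\mathbb{R}^k$, where $f/\Vert f\Vert$ genuinely maps to $\mathcal{S}^{k-1}$. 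With that insertion, the two stages combine as you intend.
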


\subsection{Uniqueness of Equilibrium for General Nonlinear Systems}\label{ssec:general_ph}

A specialisation of the Poincar\'e--Hopf Theorem will  now be presented, which will be applied to different established dynamical models in Sections~\ref{sec:epidemics} and \ref{sec:other}.

We focus on the system \eqref{eq:general_system} on contractible manifolds. A manifold $\mathcal{M}$ is contractible if it is homotopy equivalent to a single point, or roughly speaking, $\mathcal{M}$ can be continuously deformed and shrunk into a single point. Any compact and convex subset of $\mathbb{R}^n$ is contractible, e.g. $\Xi_n$ in \eqref{eq:Xi}. A contractible manifold $\mathcal{M}$ has Euler characteristic $\chi(\mathcal{M}) = 1$. The following is one of the paper's main novel contributions.

\begin{theorem}[Unique Equilibrium]\label{thm:unique}
	Consider the autonomous system 
	\begin{equation}\label{eq:system}
	\dot{x} = f(x)
	\end{equation}
	where $f$ is smooth, and $x \in \mathbb{R}^n$. Let $\mathcal{M} \subset \mathbb{R}^n$, be an $m$-dimensional compact, contractible, and smooth manifold with boundary $\partial\mathcal{M}$, and with $m\leq n$. Suppose that $\mathcal{M}$ is positively invariant for \eqref{eq:system} and furthermore, $f$ points inward\footnote{Note that this implies that $\mathcal{M}$ cannot have equilibria on its boundaries.} to $\mathcal{M}$ at every point $x\in \partial\mathcal{M}$. If $df_{\bar x}$ is Hurwitz for every $\bar x \in \mathcal{M}$ satisfying $f(\bar x) = 0$, then \eqref{eq:system} has a unique equilibrium $x^* \in \intr(\mathcal{M})$. Moreover, $x^*$ is locally exponentially stable.
\end{theorem}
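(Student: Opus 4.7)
The plan is to apply Proposition~\ref{thm:ph} to the time-reversed vector field $g := -f$ on $\mathcal{M}$. Because Proposition~\ref{thm:ph} requires the vector field to point outward on $\partial\mathcal{M}$ whereas $f$ points inward, this reversal is essential; zeros of $g$ coincide with equilibria of \eqref{eq:system}, so counting the one is counting the other. Moreover, since $f$ points strictly inward, $f \neq 0$ everywhere on $\partial\mathcal{M}$, so all equilibria necessarily lie in $\intr(\mathcal{M})$.

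To invoke Proposition~\ref{thm:ph}, I first verify nondegeneracy. At any equilibrium $\bar{x}$, the Hurwitz hypothesis makes $df_{\bar{x}}$ nonsingular, hence $dg_{\bar{x}} = -df_{\bar{x}}$ is nonsingular as well. Every zero of $g$ is therefore nondegenerate and isolated, and compactness of $\mathcal{M}$ then ensures only finitely many equilibria. Next I compute each index. The key observation is that a Hurwitz matrix $B$ on an $m$-dimensional space satisfies $\sign \det(B) = (-1)^m$: real eigenvalues are all negative, complex eigenvalues occur in conjugate pairs with product $\lambda \bar\lambda = |\lambda|^2 > 0$, and the number of real (negative) eigenvalues has the same parity as $m$. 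Consequently,
\begin{equation*}
  \sign \det(dg_{\bar{x}}) \;=\; \sign \det(-df_{\bar{x}}) \;=\; (-1)^m \cdot (-1)^m \;=\; +1
\end{equation*}
at every equilibrium $\bar{x}$.

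Since $\mathcal{M}$ is contractible, $\chi(\mathcal{M}) = 1$, and Proposition~\ref{thm:ph} applied to $g$ yields
\begin{equation*}
  \sum_{\bar{x}_i : f(\bar{x}_i) = 0} \sign \det(dg_{\bar{x}_i}) \;=\; \chi(\mathcal{M}) \;=\; 1.
\end{equation*}
Because every summand is $+1$, the sum must contain exactly one term, which simultaneously establishes existence and uniqueness of the equilibrium $x^* \in \intr(\mathcal{M})$. Local exponential stability of $x^*$ then follows from the Hurwitz property of $df_{x^*}$ via Lyapunov's indirect method applied in local coordinates on $\mathcal{M}$.

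The main obstacle is the sign bookkeeping: one must track that the reversal $f \mapsto -f$ introduces a factor $(-1)^m$ in the determinant, and that this factor is precisely cancelled by the $(-1)^m$ sign of the determinant of a Hurwitz matrix on an $m$-dimensional space, yielding the $+1$ needed to force the Poincar\'e--Hopf sum to equal $\chi(\mathcal{M}) = 1$. A secondary care point is that ``$df_{\bar{x}}$ Hurwitz'' must be interpreted intrinsically, as the derivative of $f$ viewed as a vector field on the $m$-dimensional manifold $\mathcal{M}$ (possibly with $m<n$), since only this intrinsic object has the right dimension for both the index formula and the linearization argument giving local exponential stability.
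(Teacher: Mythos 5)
Your proposal is correct and follows essentially the same route as the paper's proof: reverse the vector field to $g=-f$ so that it points outward, apply the Poincar\'e--Hopf Theorem on the contractible manifold with $\chi(\mathcal{M})=1$, observe that the Hurwitz hypothesis forces every index to equal $+1$ (the paper gets $\sign\det(dg_{\bar x})=+1$ directly from all eigenvalues of $-df_{\bar x}$ having positive real part, whereas you route through the identity $\sign\det(df_{\bar x})=(-1)^m$ and cancel with the $(-1)^m$ from negation --- the same fact), and conclude existence and uniqueness simultaneously from the sum being $1$, with local exponential stability from linearization. Your closing remark about interpreting $df_{\bar x}$ intrinsically on the $m$-dimensional manifold corresponds to the paper's preliminary step of using Nagumo's theorem and the tangent-cone identification to verify that $f$ genuinely defines a vector field $\mathcal{M}\to T\mathcal{M}$.
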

\begin{proof}
	The bulk of the proof focuses on establishing the properties of \eqref{eq:system} which will allow the existence and uniqueness of the equilibrium $x^*$ to be concluded from application of the Poincar\'e-Hopf Theorem, viz. Proposition~\ref{thm:ph}.
	
	To begin, we need to connect the language of Proposition~\ref{thm:ph} to that of Theorem~\ref{thm:unique}. First, recall the identification of the tangent cone for $\mathcal{M}$ immediately above Proposition~\ref{prop:nagumo}. The theorem assumptions, in conjunction with Definition~\ref{def:inward} and Proposition~\ref{prop:nagumo}, imply that  for all $x\in \mathcal{M}$, $f(x)$ is in the tangent space $T_x\mathcal{M}$ of $\mathcal{M}$. In other words, $f : \mathcal{M} \to T\mathcal{M}$ defines a smooth vector field on $\mathcal{M}$, as required for Proposition~\ref{thm:ph}. Thus, one can consider the system \eqref{eq:system} in $\mathcal{M}$, or $f$ as representing a smooth vector field on $\mathcal{M}$, and conceptually we are discussing the same thing. 
	
	Note that $\bar x$ is a zero of $f$ if and only if $\bar x$ is a zero of $-f$. In other words, the possibly empty set of zeros of $f$ and $-f$ are the same (at this stage, we have not established the existence of any zero $\bar x \in \mathcal{M}$). Denote $g = -f$ as vector-valued function representing the ``negative'' vector field, i.e. at any $x$, $f(x)$ and $g(x)$ point in the opposite direction.
	
	For any square matrix $A$ the product of its eigenvalues is equal to $\det(A)$. Suppose that $df_{\bar x}$ is Hurwitz for some $\bar x\in\mathcal{M}$. Then, all eigenvalues of $dg_{\bar x} = - df_{\bar x}$ have positive real part, and one has $\det(dg_{\bar x})  > 0$. For any $\bar x\in\mathcal{M}$ satisfying $f(\bar x) = 0$ and $df_{\bar x}$ is Hurwitz, we therefore have $\sign \det(dg_{\bar x}) = +1$, and $dg_{\bar x}$ is orientation preserving.
	
	We are now ready to apply Proposition~\ref{thm:ph} to the vector field $g = -f$ on the manifold $\mathcal{M}$. We know that \textit{if $\bar x$ is a zero of $g$} (and if it exists), then it is nondegenerate by hypothesis and thus $\sign \det(dg_{\bar x}) = \pm 1$.
	Now, the hypothesis that $f$ points inwards at every $x \in \partial\mathcal{M}$ is equivalent to having the vector field $g$ point outwards at every $x\in \partial\mathcal{M}$. Then, \eqref{eq:poincare_hopf_sum} yields
	\begin{align}\label{eq:ph_sum}
	\sum_{i}\sign \det(dg_{\bar x_i}) & = \chi(\mathcal{M}) = 1, 
	\end{align}  
	since $\mathcal{M}$ is contractible. 
	Because $\sign \det(dg_{\bar x_i}) = \pm 1$, there must be at least one zero of $g$ contributing to the left-hand side of \eqref{eq:ph_sum}: we have established the existence of at least one isolated zero $\bar x_1 \in \mathcal{M}$. The hypothesis that $df_{\bar x_i}$ is Hurwitz implies that  $\sign \det(dg_{\bar x_i}) = +1$ for every $\bar{x}_i$, as established in the preceding paragraph. This immediately proves the uniqueness of $\bar{x}_1 = x^*$. Recalling that the set of zeros of $f$ and $g = -f$ are the same establishes the theorem claim. Since $df_{\bar x}$ is Hurwitz, the Linearization Theorem \cite[Theorem 5.41]{sastry1999nonlinearbook} establishes the local exponential stability of $x^*$. Note that the analysis also tells us that $x^* \in \intr(\mathcal{M})$. 
\end{proof}

Since the Poincar\'{e}--Hopf Theorem does not restrict the manifold in consideration to be in the positive orthant of $\mathbb{R}^n$, Theorem~\ref{thm:unique} does not impose that the state of \eqref{eq:system} satisfy $x(t) \geq \vect 0_n$. Three system models in the natural sciences are subsequently considered, which are such that $x(t) \in \mathbb{R}^n_{\geq 0}$. Similarly, other works discussed in the Introduction have applied the Poincar\'{e}--Hopf Theorem to specific systems (as opposed to general systems in Theorem~\ref{thm:unique}), and several systems are not restricted to $\mathbb{R}^n_{\geq 0}$, e.g. \cite{belabbas2013_formationstable,christensen2017PH_potentialgames}. Identifying a suitable manifold $\mathcal{M}$ requires some knowledge of the specific system. For instance, if \eqref{eq:system} has a trivial equilibrium, i.e. at the origin $\vect 0_n$, and one is interested using Theorem~\ref{thm:unique} to study a non-trivial equilibrium, then $\mathcal{M}$ cannot contain the origin. 


\begin{remark}
	Note that the wording chosen in the second to last sentence of the theorem statement is deliberate. For general nonlinear $f$, it may not even be easy to establish the existence of an equilibrium $\bar x \in \mathcal{M}$, let alone whether $\bar x $ is unique. Nonetheless, one does not require knowing the existence or otherwise of $\bar x$ to evaluate $df_{x}$. Then, one can obtain an expression for $df_{\bar x}$ (and perhaps determine whether it is Hurwitz) by leveraging the equality $f(\bar x) = 0$, even if existence of such a $\bar x$ has not been established.
\end{remark}

\begin{remark}
In \cite[Lemma 4.1]{lajmanovich1976SISnetwork}, it is shown by an application of Brouwer's Fixed-Point Theorem \cite{khamsi2011metric_space_book} that if the compact and convex set $\mathcal{M}$ is positively invariant for the system \eqref{eq:system} and $f$ is Lipschitz in $\mathcal{M}$, then there exists at least one equilibrium $\bar x \in \mathcal{M}$. However, unlike Theorem~\ref{thm:unique}, the uniqueness of $\bar x$ or any stability properties cannot be concluded. Moreover, Theorem~\ref{thm:unique} relaxes the requirement that $\mathcal{M}$ be convex, since a great number of contractible manifolds are nonconvex. For example, if there exists a $x_0 \in \mathcal{M}$ such that for all $x \in \mathcal{M}$ and $t\in [0,1]$, the point $tx_0 + (1-t)x \in \mathcal{M}$, then $\mathcal{M}$ is contractible; such an $\mathcal{M}$ is sometimes called a star domain.
\end{remark}

\section{Deterministic Network Models of Epidemics}\label{sec:epidemics}
In this section, as a first illustration, we apply Theorem~\ref{thm:unique} to the familiar deterministic SIS network model introduced in Section~\ref{ssec:SIS_intro}. We require some additional notation and existing linear algebra results. 

A \textit{Metzler} matrix is a matrix which has off-diagonal entries that are all nonnegative \cite{berman1979nonnegative_matrices}. 
A matrix $ A \in \mathbb{R}^{n\times n}$ with all off-diagonal entries nonpositive is called an $M$-matrix if it can be written as $ A = s I_n - B$, with $s > 0$, $ B \geq \mat 0_{n\times n}$ and $s \geq \rho(B)$ \cite{berman1979nonnegative_matrices}. The following results on Metzler matrices and $M$-matrices will prove useful for later analysis\footnote{Such matrices are related to ``compartmental matrices'' studied in models of chemical reaction systems, ecosystems, etc.~\cite{jacquez1993qualitative_compartmental}.}.

\begin{lemma}\label{lem:metzler}
	Let $A$ be an irreducible Metzler matrix. Then, $s(A)$ is a simple eigenvalue of $A$ and there exists a unique (up to scalar multiple) vector $x > \vect 0_n$ such that $Ax = s(A) x$. Let $z \geq \vect 0_n$ be a given non-zero vector. If $Az \leq \lambda z$ for some scalar $\lambda$, then $s(A) \leq \lambda$, with equality if and only if $Az = \lambda z$. If $Az \geq \lambda z$ and $Az \neq \lambda z$, for some scalar $\lambda$, then $s(A) > \lambda$.
\end{lemma}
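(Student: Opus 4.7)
The plan is to reduce the statements about an irreducible Metzler matrix $A$ to the classical Perron--Frobenius theorem for irreducible nonnegative matrices by a spectral shift. Choose any scalar $c > 0$ large enough that $B \triangleq A + cI_n \geq \mat 0_{n\times n}$; this is possible because only the diagonal entries of $A$ can be negative. The off-diagonal pattern of $B$ matches that of $A$, so $B$ inherits irreducibility from $A$. Moreover, $\sigma(B) = \{\mu + c : \mu \in \sigma(A)\}$, from which $\rho(B) = s(A) + c$, and eigenvectors of $B$ corresponding to $\rho(B)$ are precisely eigenvectors of $A$ corresponding to $s(A)$.

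First, I would apply the Perron--Frobenius theorem for irreducible nonnegative matrices to $B$: this gives that $\rho(B)$ is a simple eigenvalue of $B$ with an essentially unique positive eigenvector $x > \vect 0_n$ satisfying $Bx = \rho(B) x$. Translating back via the shift, $s(A)$ is a simple eigenvalue of $A$ and $x$ is the corresponding (essentially unique) positive eigenvector, establishing the first conclusion.

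For the second conclusion, I would observe that $Az \leq \lambda z$ is equivalent to $Bz \leq (\lambda + c) z$. The classical Collatz--Wielandt-type inequality for irreducible nonnegative matrices (see e.g.\ \cite{berman1979nonnegative_matrices}) then implies $\rho(B) \leq \lambda + c$, i.e., $s(A) \leq \lambda$, with equality if and only if $Bz = (\lambda + c) z$, i.e., $Az = \lambda z$. The third conclusion follows by the same shift applied to the corresponding strict form of the Collatz--Wielandt inequality: if $Bz \geq (\lambda + c) z$ with $Bz \neq (\lambda + c) z$ and $z \geq \vect 0_n$ nonzero, then $\rho(B) > \lambda + c$, which gives $s(A) > \lambda$.

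I do not expect a real obstacle here, since the entire argument is a bookkeeping exercise that transports the Perron--Frobenius theorem from the nonnegative matrix $B$ back to the Metzler matrix $A$. The only point requiring some care is verifying that $B$ remains irreducible after the shift and that the correspondence between eigenpairs of $A$ and $B$ preserves positivity of eigenvectors, both of which are immediate from the definitions.
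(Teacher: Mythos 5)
Your proposal is correct and follows essentially the same route as the paper, which likewise invokes the Perron--Frobenius theorem for irreducible nonnegative matrices and the Collatz--Wielandt-type comparison results from \cite{berman1979nonnegative_matrices}; the diagonal shift $B = A + cI_n$ you spell out is exactly the standard bookkeeping implicit in that citation. No gaps.
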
 
The first half of the lemma is a direct consequence of the Perron--Frobenius Theorem for nonnegative matrices \cite[Theorem 2.1.4]{berman1979nonnegative_matrices}. The second part can be obtained from a straightforward application of \cite[Theorem 2.1.11]{berman1979nonnegative_matrices}.

\begin{lemma}[{\cite[Theorem 6.4.6]{berman1979nonnegative_matrices}}]\label{lem:M_matrix_singular}
	Let $R \in \mathbb{R}^{n\times n}$ have all off-diagonal entries nonpositive. Then, the following statements are equivalent
	\begin{enumerate}
		\item $R$ is an $M$-matrix
		\item The eigenvalues of $R$ have nonnegative real parts. 
	\end{enumerate}
\end{lemma}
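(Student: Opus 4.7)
The plan is to prove the two implications separately, leaning on the Perron--Frobenius theory for nonnegative matrices as the only nontrivial ingredient.

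For $(1) \Rightarrow (2)$, I would start from the definition, writing $R = sI_n - B$ with $B \geq \mat 0_{n\times n}$, $s > 0$, and $s \geq \rho(B)$. The spectrum of $R$ is then $\{s - \mu : \mu \in \sigma(B)\}$, so for any eigenvalue $\lambda = s - \mu$ of $R$,
\begin{equation*}
{\rm Re}(\lambda) \;=\; s - {\rm Re}(\mu) \;\geq\; s - |\mu| \;\geq\; s - \rho(B) \;\geq\; 0,
\end{equation*}
where the crucial middle inequality is the standard Perron--Frobenius bound $|\mu| \leq \rho(B)$ for every eigenvalue of $B \geq \mat 0_{n\times n}$. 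This direction is essentially just substitution.

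For $(2) \Rightarrow (1)$, I would attempt to construct the required decomposition directly. Choose any scalar $s$ with $s > \max\{0, \max_i R_{ii}\}$, and set $B := sI_n - R$. By construction, the diagonal entries of $B$ are positive and the off-diagonal entries are $-R_{ij} \geq 0$, so $B \geq \mat 0_{n\times n}$ with $s > 0$. The remaining and central burden is to verify $\rho(B) \leq s$. Applying the Perron--Frobenius Theorem to $B$, the spectral radius $\rho(B)$ is actually attained by a \emph{real} nonnegative eigenvalue $\mu_0$ of $B$, which is of the form $\mu_0 = s - \lambda_0$ for some real eigenvalue $\lambda_0$ of $R$. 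For any other eigenvalue $\lambda = a + bi$ of $R$,
\begin{equation*}
\rho(B) \;\geq\; |s - \lambda| \;=\; \sqrt{(s-a)^2 + b^2} \;\geq\; s - a,
\end{equation*}
which rearranges to $\lambda_0 \leq a = {\rm Re}(\lambda)$. Thus $\lambda_0$ is the minimiser of ${\rm Re}(\lambda)$ over $\sigma(R)$, and hypothesis $(2)$, applied to $\lambda_0$ itself, forces $\lambda_0 \geq 0$; hence $\rho(B) = s - \lambda_0 \leq s$, completing the $M$-matrix decomposition.

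The main obstacle I anticipate is precisely the step in the reverse direction that identifies the Perron eigenvalue $\rho(B)$ of the nonnegative matrix $B$ with a \emph{real} eigenvalue $\lambda_0$ of $R$ (rather than merely a bound involving moduli of complex eigenvalues). This is what allows the hypothesis ``${\rm Re}(\lambda) \geq 0$ for all $\lambda$'' to be invoked as the scalar inequality ``$\lambda_0 \geq 0$'' without loss. As a sanity check, the argument also reveals the pleasant by-product that a matrix with nonpositive off-diagonal entries whose spectrum lies in the closed right half-plane cannot admit nonzero purely imaginary eigenvalues, since such an eigenvalue would force $|s - \lambda| > s$ and break the decomposition.
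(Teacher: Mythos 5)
Your proof is correct. Note, however, that the paper does not prove this lemma at all: it is imported verbatim as \cite[Theorem 6.4.6]{berman1979nonnegative_matrices}, so there is no in-paper argument to compare against. What you have supplied is a clean, self-contained derivation from the Perron--Frobenius theorem for (possibly reducible) nonnegative matrices. The forward direction is, as you say, mere substitution. In the reverse direction you correctly identify and resolve the only delicate point: after setting $B = sI_n - R$ with $s > \max\{0,\max_i R_{ii}\}$, one needs $\rho(B)\leq s$, and the hypothesis only controls \emph{real parts} of $\sigma(R)$ while $\rho(B)$ is a modulus. Your observation that $\rho(B)$ is itself an eigenvalue of the nonnegative matrix $B$, hence equals $s-\lambda_0$ for a \emph{real} eigenvalue $\lambda_0$ of $R$ that minimises ${\rm Re}(\lambda)$ over $\sigma(R)$, is exactly the right bridge, and the chain $\rho(B)\geq |s-\lambda|\geq s-{\rm Re}(\lambda)$ is valid for every $\lambda\in\sigma(R)$. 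Your closing remark---that a matrix with nonpositive off-diagonal entries and spectrum in the closed right half-plane has no nonzero purely imaginary eigenvalues, since $\sigma(R)$ must lie in the closed disk of radius $\rho(B)\leq s$ centred at $s$---is also correct and is a standard consequence of this localisation. No gaps.
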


\begin{lemma}[{\cite[Theorem 4.31]{qu2009cooperative_book}}]\label{lem:M_matrix_singular_irr}
	Suppose that $R$ is a singular irreducible $M$-matrix. If $Q$ is a nonnegative diagonal matrix with at least one positive diagonal element, then the eigenvalues of $R + Q$ have strictly positive real parts. 
\end{lemma}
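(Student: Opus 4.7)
My plan is to reduce the claim to showing that $-(R+Q)$ is Hurwitz and then to exploit the Perron--Frobenius structure of the associated irreducible Metzler matrices via Lemma~\ref{lem:metzler}. Since $Q$ is diagonal, the matrix $R+Q$ retains nonpositive off-diagonal entries, so showing $-(R+Q)$ is Hurwitz is equivalent to showing that every eigenvalue of $R+Q$ has strictly positive real part, which is exactly what the lemma asserts.

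First, I would extract a Perron eigenvector of $-R$. Because $R$ is a singular irreducible $M$-matrix, $-R$ is an irreducible Metzler matrix, and Lemma~\ref{lem:M_matrix_singular} together with singularity forces $s(-R) = 0$. Lemma~\ref{lem:metzler} then produces a vector $x > \vect 0_n$, unique up to positive scalar, satisfying $-Rx = s(-R) x = \vect 0_n$, i.e.\ $Rx = \vect 0_n$. Next, I would turn attention to $-R - Q$, which remains irreducible and Metzler since a diagonal perturbation does not change the off-diagonal sign pattern. Evaluating on the Perron vector gives $(-R-Q)x = -Qx$. Because $Q \geq \mat 0_{n\times n}$ is diagonal with at least one strictly positive diagonal entry and $x > \vect 0_n$ componentwise, the vector $Qx$ is nonnegative and nonzero; hence $(-R-Q)x \leq \vect 0_n$ with strict inequality in at least one component.

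Finally, I would invoke the second half of Lemma~\ref{lem:metzler} on the irreducible Metzler matrix $A = -R - Q$ with $z = x$ and $\lambda = 0$: the inequality $Az \leq \lambda z$ holds but fails equality, so $s(-R-Q) < \lambda = 0$. Translating back to $R+Q$, every eigenvalue has strictly positive real part, as required.

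The step I expect to be most delicate is verifying that the strict inequality version of Lemma~\ref{lem:metzler} really applies, i.e.\ ensuring $Qx \neq \vect 0_n$. This hinges on having $x > \vect 0_n$ \emph{strictly} componentwise rather than merely $x \geq \vect 0_n$, which is guaranteed precisely by the irreducibility of $R$ through the Perron--Frobenius statement of Lemma~\ref{lem:metzler}. Without irreducibility, a null vector of $R$ could have zero components in exactly the positions where $Q$ is positive, and the strict-inequality mechanism would collapse; so the irreducibility hypothesis is essential to the argument rather than cosmetic.
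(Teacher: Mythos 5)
Your proof is correct. Note that the paper itself offers no proof of this lemma---it is imported verbatim as \cite[Theorem 4.31]{qu2009cooperative_book}---so there is no in-paper argument to compare against; what you have written is a clean, self-contained derivation using only the paper's own Lemma~\ref{lem:metzler} and Lemma~\ref{lem:M_matrix_singular}. Each step checks out: $s(-R)=0$ follows from Lemma~\ref{lem:M_matrix_singular} (giving $s(-R)\leq 0$) combined with singularity (giving $0\in\sigma(-R)$); the Perron vector $x>\vect 0_n$ of the irreducible Metzler matrix $-R$ exists by Lemma~\ref{lem:metzler}; $-R-Q$ remains irreducible Metzler since a diagonal perturbation leaves the off-diagonal pattern untouched; and the strict-inequality clause of Lemma~\ref{lem:metzler} applies because $Qx\neq\vect 0_n$, which you correctly trace back to the strict componentwise positivity of $x$ and hence to irreducibility. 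This is essentially the standard Perron--Frobenius argument one would find in the cited reference, and your closing remark about where the argument would break without irreducibility is accurate.
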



\subsection{A Unique Endemic Equilibrium for the Network SIS Model}\label{ssec:SIS_ph}

To begin, notice from Proposition~\ref{prop:SIS_network_convergence} that the matrix $-D+B$ uniquely determines the equilibria, and the convergence behaviour of the SIS network system \eqref{eq:SIS_contact_network}. We are interested in applying Theorem~\ref{thm:unique} for $s(-D+B) > 0$ to prove the system \eqref{eq:SIS_contact_network} has a unique endemic equilibrium $x^* \in \intr(\Xi_n)$. Later, we apply the same tool to prove a more powerful result on decentralised control of the SIS model, providing a further generalisation of Proposition~\ref{prop:SIS_network_convergence}. First, we need to find a contractible manifold $\mathcal{M}$ for the system \eqref{eq:SIS_contact_network} with the property that at all points on the boundary $\partial\mathcal{M}$, 
\begin{equation}\label{eq:vf_SIS}
f(x) = (-D+B-XB)x
\end{equation}
is pointing inward. We now identify one such $\mathcal{M}$. 

Since $B$ is nonnegative, $-D+B$ is a \textit{Metzler} matrix. Let  $\phi \triangleq s(-D + B)$, where $y > \vect 0_n$ satisfies $(-D + B)y = \phi y$ in accord with Lemma~\ref{lem:metzler}. Without loss of generality, assume $\max_i y_i = 1$. For a given $\epsilon \in (0,1)$, define the set
\begin{equation}\label{eq:M_definition}
\mathcal{M}_\epsilon \triangleq \{x : \epsilon y_i \leq x_i\leq 1, \forall i = 1, \hdots, n \}.
\end{equation}
The boundary $\partial\mathcal{M}_\epsilon$, is the union of the faces 
\begin{subequations}\label{eq:faceM}
	\begin{align}
	P_i & = \{x :  x_i = \epsilon y_i, x_j \in [\epsilon y_j, 1]\,\forall j \neq i\}, \label{eq:face1M} \\
	Q_i & = \{x :  x_i = 1, x_j \in [\epsilon y_j, 1]\,\forall j \neq i\}. \label{eq:face2M} 
	\end{align}
\end{subequations}
Note that $\mathcal{M}_\epsilon \subset \Xi_n$ for all $\epsilon \in (0,1)$, where $\Xi_n$ is given in \eqref{eq:Xi}. This manifold, and related manifolds, will be used in our application of Theorem~\ref{thm:unique} below. To this end, we state the following lemma, with the proof given in Appendix~\ref{app:lem_pf}.	

\begin{lemma}\label{lem:positive_invariance}
	Consider the system \eqref{eq:SIS_contact_network}, and suppose that $\mathcal{G}=(\mathcal{V}, \mathcal{E}, B)$ is strongly connected. Suppose further that $\phi \triangleq s(-D + B) > 0$. Then, there exists a sufficiently small $\epsilon > 0$ such that $\mathcal{M}_\epsilon$ in \eqref{eq:M_definition} and $\intr(\mathcal{M}_\epsilon)$ are both positive invariant sets of \eqref{eq:SIS_contact_network}, and 
	\begin{subequations}\label{eq:boundary_point}
		\begin{align}
			-\mathbf{e}_i^\top \dot{x} & < 0 \qquad \forall\; x \in P_i\,, i = 1, \hdots n \label{eq:boundary_point1} \\
			\mathbf{e}_i^\top \dot{x} & < 0 \qquad \forall\; x \in Q_i\,, i = 1, \hdots n \label{eq:boundary_point2}
		\end{align}
	\end{subequations}
	where $\mathbf{e}_i$ is the $i$th canonical unit vector. Moreover, if $x(0) \in \partial\Xi_n\setminus \vect 0_n$, then $x(\bar \kappa) \in \mathcal{M}_{\epsilon}$ for some finite $\bar\kappa > 0$ and if $x \in \Xi_n \setminus \vect 0_n$ is an equilibrium of \eqref{eq:SIS_contact_network}, then $x \in \intr(\mathcal{M}_\epsilon)$.
\end{lemma}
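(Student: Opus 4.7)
The overall plan is to choose a single $\epsilon>0$, driven by the Perron--Frobenius eigenvector $y > \vect 0_n$ and eigenvalue $\phi = s(-D+B) > 0$ of the irreducible Metzler matrix $-D+B$, that simultaneously secures all four stated conclusions. The recurring identity, obtained component-wise from $(-D+B)y = \phi y$, is $\sum_j b_{ij} y_j = (d_i+\phi) y_i$.

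The first step is the boundary calculation. On $Q_i$ one has $x_i=1$ and hence $\dot x_i = -d_i < 0$, giving \eqref{eq:boundary_point2} for any choice of $\epsilon$. On $P_i$ one has $x_i=\epsilon y_i$ and $x_j \geq \epsilon y_j$, and substituting into \eqref{eq:SIS_contact_network} using the identity above yields $\dot x_i \geq \epsilon y_i\bigl[\phi - \epsilon y_i(d_i+\phi)\bigr]$. Defining $\bar\epsilon \triangleq \min_i \phi/\bigl(y_i(d_i+\phi)\bigr)$ and choosing $\epsilon < \bar\epsilon$ makes this strictly positive for every $i$, establishing \eqref{eq:boundary_point1}. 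Positive invariance of $\mathcal{M}_\epsilon$ then follows from Proposition~\ref{prop:nagumo}, and invariance of $\intr(\mathcal{M}_\epsilon)$ is handled by a contradiction argument: were a solution starting in the open interior first to touch $\partial\mathcal{M}_\epsilon$ at time $t^*$, the strict inward sign of the relevant component of $\dot x(t^*)$, together with smoothness of $f$, would force $x(t)\notin\mathcal{M}_\epsilon$ for $t$ slightly less than $t^*$, a contradiction.

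For the remaining claims, first observe that any equilibrium $x^* \in \Xi_n\setminus\vect 0_n$ must satisfy $x^* > \vect 0_n$: if $x^*_i=0$ for some $i$ then $0 = \dot x^*_i = \sum_j b_{ij} x^*_j$ forces $x^*_j = 0$ on every in-neighbour of $i$, and strong connectivity of $\mathcal{G}[B]$ iterates this to all nodes, contradicting $x^*\neq\vect 0_n$. Also $x^*_i<1$, since $x^*_i=1$ would give $\dot x^*_i=-d_i\neq 0$. Applying the $P_i$ calculation at the index $i^*$ realising $\min_i x^*_i/y_i$, and using $\dot x^*_{i^*}=0$ in place of the strict inequality, reveals $\min_i x^*_i/y_i \geq \bar\epsilon$, a bound uniform across every nonzero equilibrium; this places each such $x^*$ in $\intr(\mathcal{M}_\epsilon)$. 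For the finite-time entry claim, a standard cascading propagation based on strong connectivity of $\mathcal{G}[B]$ shows that $x(t)\in\intr(\Xi_n)$ for all sufficiently small $t>0$ whenever $x(0)\in\partial\Xi_n\setminus\vect 0_n$; running the same minimum-ratio comparison with $r(t)=\min_i x_i(t)/y_i$ yields a strict lower bound on the lower Dini derivative of $r$ whenever $r(t)<\bar\epsilon$, so $r(\bar\kappa)\geq \epsilon$ is reached in finite time $\bar\kappa>0$, whence $x(\bar\kappa)\in\mathcal{M}_\epsilon$.

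The main obstacle is coordinating all four requirements through a single threshold; the reason the scheme succeeds is precisely that the same algebraic quantity $\bar\epsilon$ governs the boundary sign calculation, the uniform equilibrium lower bound, and the monotonicity of the minimum ratio $r(t)$. A secondary technical point is the finite-time propagation of positivity through the graph when $x(0)$ has both zero and positive components, but strong connectivity together with the finiteness of $\mathcal{V}$ makes this a routine induction on hop distance.
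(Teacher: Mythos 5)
Your proof is correct, and the first half (the sign computations on $P_i$ and $Q_i$, the appeal to Nagumo's theorem, and the first-touching-time contradiction for the interior) is essentially the paper's argument: the paper performs the same substitution of $(-D+B)y=\phi y$ into $\dot x_i$ on $P_i$, keeping the slack terms $z_j = x_j-\epsilon y_j$ explicit rather than lower-bounding directly, and likewise gets $\dot x_i=-d_i$ on $Q_i$. Where you genuinely diverge is in the last two claims. The paper handles the equilibrium location and the finite-time entry qualitatively: it propagates positivity through the strongly connected graph to empty the zero-index set in finite time, then invokes boundedness of $\partial\Xi_n\setminus\vect 0_n$ to extract a single small $\epsilon$, and concludes only that nonzero equilibria lie in $\intr(\Xi_n)$, leaving the step to $\intr(\mathcal{M}_\epsilon)$ implicit. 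Your minimum-ratio functional $r(t)=\min_i x_i(t)/y_i$ with the explicit threshold $\bar\epsilon=\min_i \phi/\bigl(y_i(d_i+\phi)\bigr)$ buys two things the paper does not make explicit: a \emph{uniform} lower bound $\min_i x_i^*/y_i\geq\bar\epsilon>\epsilon$ valid for every nonzero equilibrium simultaneously (which is exactly what is needed to place them in $\intr(\mathcal{M}_\epsilon)$ for a fixed $\epsilon$), and a clean monotonicity mechanism for the finite-time entry. Two small cautions: your stated Dini-derivative bound is proportional to $r$ rather than uniformly positive, so near $r=0$ it only gives exponential growth away from zero — this still yields a finite $\bar\kappa$, but one that depends on $x(0)$ (a uniform $\bar\kappa$ over all of $\partial\Xi_n\setminus\vect 0_n$ is in fact unattainable since that set accumulates at the equilibrium $\vect 0_n$, so your per-initial-condition reading of the statement is the right one); and in the equilibrium step you should note $x_{i^*}^* = c\,y_{i^*}<1$ so the factor $1-c\,y_{i^*}$ keeps the sign needed for the comparison. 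Neither affects correctness.
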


We now explain the intuition behind Lemma~\ref{lem:positive_invariance}, and refer the reader to the helpful diagram in Fig.~\ref{fig:manifold_SIS} for an illustrative example. The inequalities \eqref{eq:boundary_point} imply that the vector field represented by $f(x)$ in \eqref{eq:vf_SIS} \textit{points inward} at all points on the boundary $\partial\mathcal{M}_\epsilon$. Notice that $\mathcal{M}_\epsilon$ is an $n$-dimensional hypercube, so it is contractible, but not \textit{smooth}. Specifically, $\mathcal{M}_\epsilon$ is not smooth on the edges and corners formed by the intersection of the faces defined in \eqref{eq:faceM}. 

In order to apply Theorem~\ref{thm:unique}, we shall therefore consider the system \eqref{eq:SIS_contact_network} on a manifold $\tilde{\mathcal{M}}_\epsilon$, which is simply $\mathcal{M}_\epsilon$ as defined in \eqref{eq:M_definition}, but with each edge and corner rounded so that $\tilde{\mathcal{M}}_\epsilon$ is a smooth manifold with boundary $\partial\tilde{\mathcal{M}}_\epsilon$. If the corners and edges are rounded by arbitrarily small amounts, then by continuity, $f(x)$ in \eqref{eq:vf_SIS} will also \textit{point inward} at all points on $\partial\tilde{\mathcal{M}}_\epsilon$. Accordingly, Proposition~\ref{prop:nagumo} implies that $\tilde{\mathcal{M}}_\epsilon$ is a positive invariant set of \eqref{eq:SIS_contact_network}. It should be noted that there are numerous ways to define a suitable $\tilde{\mathcal M}_\epsilon$; it is not unique, and most importantly, such a suitable $\tilde{\mathcal M}_\epsilon$ always exists. 

To aid the reader, an example for $n =2$, corresponding to Fig.~\ref{fig:manifold_SIS}, is now provided. With $x = [x_1, x_2]^\top$, we define $\tilde{\mathcal M}_\epsilon = \bigcup_{i=1}^6 \mathcal{A}_i$, where
\begin{align*}
	\mathcal{A}_1 & = \{x : x_1 \in [\epsilon y_1, 1], x_2 \in [\epsilon y_2+r, 1-r]\} \\
	\mathcal{A}_2 & = \{x : x_1 \in [\epsilon y_1 + r, 1-r], x_2 \in [\epsilon y_2, 1]\} \\
	\mathcal{A}_3 & = \{x : (x_1 - (\epsilon y_1 + r))^2 + (x_2 - (\epsilon y_2 + r))^2 = r^2\} \\
	\mathcal{A}_4 & = \{x : (x_1 - (\epsilon y_1 + r))^2 + (x_2 - (1 - r))^2 = r^2\} \\
	\mathcal{A}_5 & = \{x : (x_1 - (1 - r))^2 + (x_2 - (\epsilon y_2 + r))^2 = r^2\} \\
	\mathcal{A}_6 & = \{x : (x_1 - (1 - r))^2 + (x_2 - (1 - r))^2 = r^2\}
\end{align*}
and $0  < r \ll \epsilon$. By taking $r$ sufficiently small, one can preserve the property that $f$ points inward on the boundaries of $\tilde{\mathcal M}_\epsilon$, and $\tilde{\mathcal M}_\epsilon$ is smooth. Geometrically speaking, $\tilde{\mathcal M}_\epsilon$ is the rectangle $\mathcal{M}_\epsilon$ but with each corner replaced by a sector of a circle of radius $r$, and with the sector angle being 90 degrees, precisely as illustrated in Fig.~\ref{fig:manifold_SIS}.

 We are now in a position to illustrate the application of the Poincar\'e--Hopf Theorem, viz. Theorem~\ref{thm:unique}, to the SIS network model. This is done over two theorems, the first being Theorem~\ref{thm:SIS_ph_unique} immediately below. Note that the statement of Theorem~\ref{thm:SIS_ph_unique} does not present new insights, as the results are already known, see Proposition~\ref{prop:SIS_network_convergence}. (In fact, Theorem~\ref{thm:SIS_ph_unique} only provides a local convergence result). Rather, it is the proof technique of Theorem~\ref{thm:SIS_ph_unique}, utilising Theorem~\ref{thm:unique}, that is of interest, and also  is crucial for subsequent extension that identifies an almost global region of attraction.
\begin{figure}
	\centering
	\def\svgwidth{0.8\linewidth}
	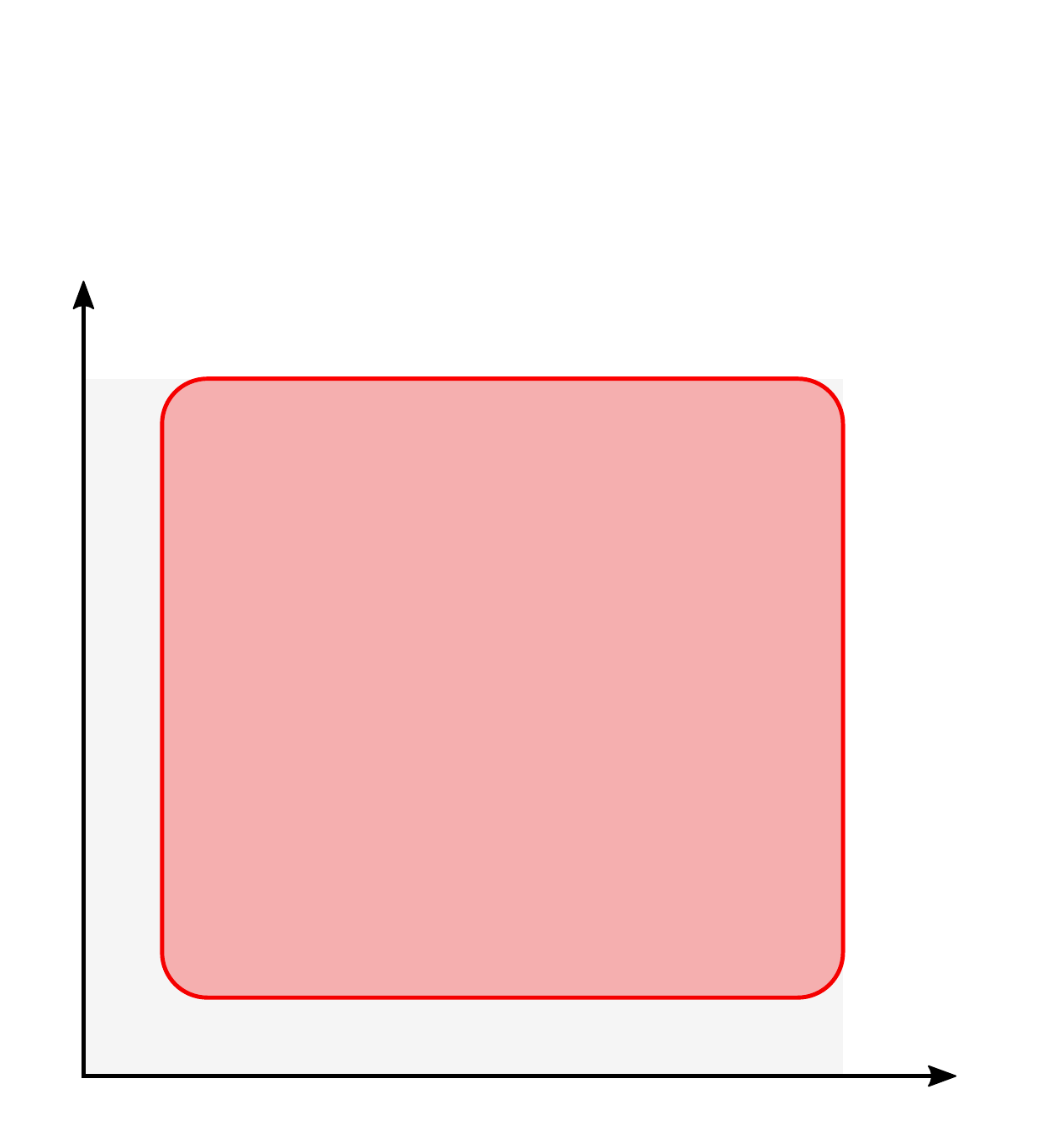
	\caption{An illustration of the compact manifolds $\mathcal{M}_\epsilon$ and $\tilde{\mathcal{M}}_\epsilon$ for \eqref{eq:SIS_contact_network}, with $n = 2$. The cube $\Xi_n$ is in light grey, with dotted black borders, and corners indicated. The dashed red line identifies the boundary of $\mathcal{M}_\epsilon$ (defined in \eqref{eq:M_definition}), and notice the lower corner point of $(\epsilon y_1, \epsilon y_2)$ with exaggerated size (in reality, $\epsilon > 0$ is small). The solid red line identifies $\partial\tilde{\mathcal{M}}_\epsilon$, with the shaded red area being $\intr(\tilde{\mathcal{M}}_\epsilon)$. One can see that $\tilde{\mathcal{M}}_\epsilon$ is simply $\mathcal{M}_\epsilon$ but with the corners rounded so that $\tilde{\mathcal{M}}_\epsilon$ is smooth. The $(1,1)$ corner is magnified to give a clear view. The rounding of corners is exaggerated for clarity; in reality, one only requires an arbitrarily small smoothing of each corner or edge. With reference to \eqref{eq:boundary_point}, black arrows denote canonical unit vectors $\mathbf{e}_i, i = 1, 2$ (with direction), and blue arrows show the vector field $f$ pointing inward at example points on $\partial\tilde{\mathcal{M}}_\epsilon$. }
	\label{fig:manifold_SIS}
\end{figure}

\begin{theorem}\label{thm:SIS_ph_unique}
	Consider the system \eqref{eq:SIS_contact_network}, and suppose that $\mathcal{G} = (\mathcal{V}, \mathcal{E}, B)$ is strongly connected, and $s(-D+B) > 0$. Let $\Xi_n$ be defined in \eqref{eq:Xi}. Then, in addition to the healthy equilibrium $\vect 0_n$, \eqref{eq:SIS_contact_network} has a unique endemic equilibrium $x^*$, satisfying $x^* \in \intr(\Xi_n)$, and $x^*$ is locally exponentially stable.
\end{theorem}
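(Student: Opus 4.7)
The plan is to apply Theorem~\ref{thm:unique} directly on the smoothed manifold $\tilde{\mathcal{M}}_\epsilon$ constructed immediately before the theorem statement. Most of the hypotheses of Theorem~\ref{thm:unique} have already been arranged: by construction $\tilde{\mathcal{M}}_\epsilon$ is compact, smooth, and (being a small deformation of a hyperrectangle) contractible; Lemma~\ref{lem:positive_invariance} together with the continuity argument in the discussion around Fig.~\ref{fig:manifold_SIS} says that for sufficiently small rounding radius, $f$ in \eqref{eq:vf_SIS} points inward at every point of $\partial\tilde{\mathcal{M}}_\epsilon$, so $\tilde{\mathcal{M}}_\epsilon$ is positively invariant. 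Moreover, the final clause of Lemma~\ref{lem:positive_invariance} tells us that every endemic equilibrium of \eqref{eq:SIS_contact_network} lies in $\intr(\mathcal{M}_\epsilon)\subset\intr(\tilde{\mathcal{M}}_\epsilon)$, so working inside $\tilde{\mathcal{M}}_\epsilon$ does not lose any candidate endemic equilibria.

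The remaining, and main, task is to verify that $df_{\bar x}$ is Hurwitz at every $\bar x\in\tilde{\mathcal{M}}_\epsilon$ with $f(\bar x)=0$; this is the step I expect to carry the technical weight. A direct differentiation of \eqref{eq:vf_SIS} gives $df_x = -D + (I-X)B - \diag(Bx)$. Since any such $\bar x$ is an endemic equilibrium, it lies in $\intr(\mathcal{M}_\epsilon)$ and in particular $\bar x>\vect 0_n$, and the equilibrium relation can be rewritten as $D\bar x = (I-\bar X)B\bar x$. Substituting this identity into $(-df_{\bar x})\bar x$ collapses two of the three terms and leaves the clean expression $(-df_{\bar x})\bar x = \diag(B\bar x)\bar x$. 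Because $\mathcal{G}[B]$ is strongly connected and $\bar x > \vect 0_n$, one has $B\bar x > \vect 0_n$, so the right-hand side is strictly positive component-wise. Combined with the fact that $-df_{\bar x}$ has nonpositive off-diagonal entries (the off-diagonal entries are $-(1-\bar x_i)b_{ij}\le 0$), the existence of a positive vector $\bar x$ with $(-df_{\bar x})\bar x>\vect 0_n$ is a standard sufficient condition for $-df_{\bar x}$ to be a nonsingular $M$-matrix; by Lemma~\ref{lem:M_matrix_singular} all its eigenvalues then have strictly positive real parts, so $df_{\bar x}$ is Hurwitz, as required.

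With every hypothesis of Theorem~\ref{thm:unique} confirmed, the theorem yields a unique equilibrium $x^*\in\intr(\tilde{\mathcal{M}}_\epsilon)$ of \eqref{eq:SIS_contact_network} that is locally exponentially stable. Since $\intr(\tilde{\mathcal{M}}_\epsilon)\subset\intr(\Xi_n)$, we have $x^*\in\intr(\Xi_n)$. Finally, to upgrade uniqueness inside $\tilde{\mathcal{M}}_\epsilon$ to uniqueness among all endemic equilibria in $\Xi_n\setminus\vect 0_n$, we appeal once more to the last statement of Lemma~\ref{lem:positive_invariance}: every equilibrium in $\Xi_n\setminus\vect 0_n$ sits inside $\intr(\mathcal{M}_\epsilon)$, which is contained in the region already covered by Theorem~\ref{thm:unique}. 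Thus $x^*$ is the unique endemic equilibrium of \eqref{eq:SIS_contact_network}, completing the proof.
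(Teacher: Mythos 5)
Your proposal is correct and follows the same overall architecture as the paper's proof: work on the smoothed manifold $\tilde{\mathcal{M}}_\epsilon$, invoke Lemma~\ref{lem:positive_invariance} to confine all endemic equilibria to its interior and to get the inward-pointing property, verify the Hurwitz hypothesis of Theorem~\ref{thm:unique} at every candidate equilibrium, and then apply Theorem~\ref{thm:unique}. The one place you diverge is the verification of Hurwitzness. The paper writes $-df_{\bar x} = F(\bar x) + \Delta(\bar x)$, shows via Lemmas~\ref{lem:metzler} and~\ref{lem:M_matrix_singular} that $F(\bar x)$ is a singular irreducible $M$-matrix, and then invokes Lemma~\ref{lem:M_matrix_singular_irr} for the positive-diagonal perturbation. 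You instead exploit the equilibrium identity to exhibit the strictly positive vector $\bar x$ with $(-df_{\bar x})\bar x = \diag(B\bar x)\bar x > \vect 0_n$ and appeal to the positive-vector test for nonsingular $M$-matrices; this is equivalent to condition~\ref{C1} with $D = \diag(\bar x)$, so it is fully covered by Lemma~\ref{lem:matrix_conditions}, and it is arguably more direct. (In fact, the same computation feeds straight into the second half of Lemma~\ref{lem:metzler}: $df_{\bar x}$ is irreducible Metzler and $df_{\bar x}\bar x \le \vect 0_n$ with $df_{\bar x}\bar x \neq \vect 0_n$, so $s(df_{\bar x}) < 0$ immediately, bypassing $M$-matrices altogether.)

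One small imprecision worth fixing: Lemma~\ref{lem:M_matrix_singular} as cited only yields eigenvalues with \emph{nonnegative} real parts for an $M$-matrix, and nonsingularity alone does not rule out purely imaginary eigenvalues. The strict statement you want — that a nonsingular $M$-matrix $sI_n - B$ with $s > \rho(B)$ has all eigenvalues with real part at least $s - \rho(B) > 0$ — is standard and true, but should be justified by that spectral-shift observation (or by routing the argument through Lemma~\ref{lem:metzler} as above) rather than by Lemma~\ref{lem:M_matrix_singular} alone.
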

\begin{proof}
Let $\tilde{\mathcal{M}_{\epsilon}}$ be defined as above Theorem~\ref{thm:SIS_ph_unique}, for some sufficiently small $\epsilon > 0$. Lemma~\ref{lem:positive_invariance} implies that any non-zero equilibrium $\bar x$ of \eqref{eq:SIS_contact_network} must satisfy $\bar x \in \intr(\tilde{\mathcal{M}}_{\epsilon})$ and
\begin{equation}\label{eq:endemic_equib}
\vect 0_n = (-D + (I_n - \bar X) B)\bar x.
\end{equation}
This implies that $I_n - \bar X$ is a positive diagonal matrix, and because $B \geq \vect 0_{n\times n}$ is irreducible, $(I_n - \bar X)B \geq \vect 0_{n\times n}$ is also irreducible. Define for convenience $F(x) \triangleq D - (I_n - X)B$. Clearly, $F(x)\,\forall\, x\in \tilde{\mathcal{M}}_{\epsilon}$ has off-diagonal entries that are all nonpositive, and it follows that $-F(\bar x)$ is a \textit{Metzler} matrix for any equilibrium $\bar x \in \tilde{\mathcal{M}}_{\epsilon}$. Lemma~\ref{lem:metzler} and \eqref{eq:endemic_equib} indicate that $s(-F(\bar x)) = 0$, and we conclude using Lemma~\ref{lem:M_matrix_singular} that $F(\bar x)$ is a singular irreducible $M$-matrix.

The Jacobian of $f(\cdot)$ in \eqref{eq:vf_SIS} at $x \in \tilde{\mathcal{M}}_{\epsilon}$ is given by 
\begin{align}
df_x & = - \left(F(x) + \Delta(x) \right)
\end{align}
where $\Delta(x) = \sum_{i=1}^n \left(\sum_{j=1}^n b_{ij} x_j\right) \mathbf{e}_i\mathbf{e}_i^\top$ is a diagonal matrix. Because $B$ is irreducible, there exists for all $i = 1, \hdots, n$, a $k_i$ such that $b_{ik_i} > 0$, which implies that for all $x\in \tilde{\mathcal{M}}_{\epsilon}$ there holds  $ \sum_{j=1}^n b_{ij} x_j \geq b_{ik_i} x_{k_i} > 0$. In other words, $\Delta(x)$ is a positive diagonal matrix for all $x\in \tilde{\mathcal{M}}_{\epsilon}$. It follows immediately from Lemma~\ref{lem:M_matrix_singular_irr} that $F(\bar x) + \Delta(\bar x)$ is a non-singular $M$-matrix, and all of its eigenvalues have strictly positive real parts. In other words, $df_{\bar x}$ is Hurwitz for all $\bar x \in \tilde{\mathcal{M}}_{\epsilon}$ satisfying \eqref{eq:endemic_equib}. Application of Theorem~\ref{thm:unique} establishes that there is in fact a unique equilibrium $x^* \in \intr(\tilde{\mathcal{M}}_{\epsilon})$, and $x^*$ is locally exponentially stable.  
\end{proof}

Existing approaches for proving uniqueness of the endemic equilibrium centre were briefly mentioned in the discussion below Proposition~\ref{prop:SIS_network_convergence}. In the next subsection, we will modify \eqref{eq:SIS_contact_network} via the introduction of decentralised nonlinear feedback controllers. As a consequence, the existing methods of analysis centred around Lyapunov functions and algebraic computations cannot be directly applied, since the system dynamics are significantly changed. On the other hand, we will show that the analysis method of Theorem~\ref{thm:SIS_ph_unique}, which exploits Theorem~\ref{thm:unique}, can be easily extended to include decentralised feedback control, with virtually no change in the analysis complexity. After presenting our results on the controlled SIS network model in Section~\ref{ssec:feedback} below, we will provide a detailed comparison of the framework proposed in this paper, against existing approaches.

\subsection{Decentralised Feedback Control: Challenges and Benefits}\label{ssec:feedback}


Recall that $s(-D+B) > 0$ implies the system \eqref{eq:SIS_contact_network} will converge to the unique endemic equilibrium $x^* \in \intr(\Xi_n)$ as outlined in Proposition~\ref{prop:SIS_network_convergence}. Given the epidemic context, control strategies for the SIS networked system \eqref{eq:SIS_contact_network} almost always have the objective of eliminating the endemic equilibrium by driving the state $x(t)$ to the healthy equilibrium $\vect 0_n$, or at least reducing the infection level at the endemic equilibrium. We give a brief overview of some existing approaches, and refer the reader to \cite{nowzari2016epidemics} for a detailed survey. 

The diagonal entry $d_i > 0$ of the diagonal matrix $D$ represents the recovery rate of the population $i$, while $b_{ij} > 0$ represents the infection rate from population $j \in \mathcal{N}_i$ to population $i$. A common, centralised approach is to formulate and solve an optimisation problem to minimise (and possibly render negative) the value $s(-D +B)$ by setting constant values for parameters $d_i$ or $b_{ij}$, perhaps with certain ``budget'' constraints \cite{preciado2014epidemic_optimal,watkins2016optimal_virus}. The approach can be made partially decentralised~\cite{wan2008control_virus,torres2016sparse_spreading}. A distributed method was recently proposed, but requires a synchronised stopping time across the network and an additional consensus process to compute a piece of centralised information \cite{mai2018_supress_epidemic}.

In contrast, we suppose that we can \textit{dynamically} control (and in particular increase) the recovery rate at node $i$, using a feedback controller. Specifically, we replace $d_i$ in \eqref{eq:SIS_contact_node} with $\bar d_i(t) = d_i + u_i(t)$, where $d_i > 0$ is the constant \textit{base recovery rate}\footnote{We have assumed that $d_i > 0$ to ensure consistency with \eqref{eq:SIS_contact_node}.} intrinsic to population $i$, and $u_i(t)$ is the injected control input at node $i$. We first give some assumptions on $u_i(t)$, before providing motivation and explanation.

In this paper, we consider the general class of decentralised, local state feedback controllers of the form
\begin{equation}\label{eq:gen_controller}
u_i(t) = h_i(x_i(t)),
\end{equation}
where $h_i : [0,1] \to \mathbb{R}_{\geq 0}$ is bounded, smooth and monotonically nondecreasing, satisfying \mbox{$h_i(0) = 0$}. We are motivated to consider \eqref{eq:gen_controller} for practical reasons. The control effort $u_i(t) = h_i(x_i(t))$ may represent pharmaceutical interventions, drug medication, or additional hospital resources, which allow infected individuals to more rapidly recovery from the disease. For instance, zinc supplements have been reported to decrease the period of infection for the common cold~\cite{hemila2017zinc}. Assuming that $h_i \geq 0$ is nondecreasing in $x_i$ yields an intuitive feedback control strategy: additional resources are introduced into node $i$ to increase (or keep constant) the recovery rate $\bar d_i(t)$ as the infection proportion $x_i(t)$ increases. For population~$i$, \eqref{eq:gen_controller} only requires the local state information $x_i(t)$, which has the advantage of decentralised implementation. This contrasts with many existing approaches, such as those described above which require centralised design or implementation, including information regarding $D$ and $B$. The work \cite{liu2019bivirus} considers $\bar d_i$ of the special form $h_i(x_i) = k_i x_i$ with $k_i >0$ and $d_i = 0$. 

The network dynamics become
\begin{equation}\label{eq:network_SIS_contact_control}
\dot{x}(t) = (- D - H(x(t)) + B - X(t)B)x(t), 
\end{equation}
where $H(x(t)) = \diag(h_1(x_1(t)), \hdots, h_n(x_n(t)))$ is a nonnegative diagonal matrix. Notice the right side of the equation is in general no longer quadratic in $x$. It is straightforward to verify that if $x(0)\in \Xi_n$, then $x(t) \in \Xi_n$ for all $t\geq 0$. In accordance with intuition, the following establishes that when $s(- D + B) \leq 0$, the controlled network system \eqref{eq:network_SIS_contact_control} retains the convergence properties of the uncontrolled system \eqref{eq:SIS_contact_network} as noted earlier in Proposition~\ref{prop:SIS_network_convergence}.
\begin{theorem}\label{thm:healthy}
	Consider the system \eqref{eq:network_SIS_contact_control}, with $\mathcal{G} = (\mathcal{V}, \mathcal{E}, B)$ strongly connected, and $\Xi_n$ defined in \eqref{eq:Xi}. Suppose that $s(-D+B) \leq 0$ and for all $i\in \mathcal{V}$, $h_i : [0,1] \to \mathbb{R}_{\geq 0}$ is bounded, smooth and monotonically nondecreasing, satisfying \mbox{$h_i(0) = 0$}. Then, $\vect 0_n$ is the unique equilibrium of \eqref{eq:network_SIS_contact_control} in $\Xi_n$, and $\lim_{t\to\infty} x(t) = \vect 0_n$ for all $x(0) \in \Xi_n$.
\end{theorem}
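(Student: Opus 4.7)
The plan is to establish the two assertions of Theorem~\ref{thm:healthy} separately: first that $\vect 0_n$ is the unique equilibrium of \eqref{eq:network_SIS_contact_control} in $\Xi_n$, and second that every trajectory starting in $\Xi_n$ converges to it. The Poincar\'e--Hopf machinery of Theorem~\ref{thm:unique} is not directly usable here since at $s(-D+B)=0$ the Jacobian at the origin fails to be Hurwitz; instead I would use Perron--Frobenius / Metzler matrix arguments for uniqueness, and a monotone comparison against the uncontrolled system of Proposition~\ref{prop:SIS_network_convergence} for convergence.

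For uniqueness, suppose $\bar x \in \Xi_n$ is a non-zero equilibrium. The first step is to show $\bar x \in \intr(\Xi_n)$: evaluating the $i$-th component of the equilibrium equation at $\bar x_i = 1$ gives $-d_i - h_i(1) < 0$, ruling out $\bar x_i=1$; and if $\bar x_i = 0$ for some $i$, the $i$-th component reduces to $\sum_{j\in\mathcal{N}_i} b_{ij}\bar x_j = 0$, forcing $\bar x_j=0$ for every neighbour $j$, whence by strong connectivity $\bar x = \vect 0_n$, a contradiction. Thus $\vect 0_n < \bar x < \vect 1_n$. Defining $M(\bar x) \triangleq -D - H(\bar x) + (I_n - \bar X)B$, which is an irreducible Metzler matrix since $I_n - \bar X$ is a positive diagonal matrix, the equilibrium condition reads $M(\bar x)\bar x = \vect 0_n$ with $\bar x > \vect 0_n$, so Lemma~\ref{lem:metzler} forces $s(M(\bar x)) = 0$.

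The main obstacle is to convert this into a contradiction with the hypothesis $s(-D+B)\leq 0$; the key is to work on the left. Applying Lemma~\ref{lem:metzler} to $(-D+B)^\top$, let $v > \vect 0_n$ be a left Perron eigenvector of $-D+B$ with $v^\top(-D+B) = s(-D+B)v^\top \leq \vect 0_n^\top$. An entrywise calculation gives
\begin{equation*}
[v^\top M(\bar x)]_j - [v^\top(-D+B)]_j = -v_j h_j(\bar x_j) - \sum_i v_i \bar x_i b_{ij},
\end{equation*}
which is strictly negative for each $j$ because $\bar x > \vect 0_n$, $v > \vect 0_n$, and each column of $B$ has at least one positive entry by irreducibility. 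Hence $v^\top M(\bar x) < \vect 0_n^\top$ componentwise; multiplying on the right by $\bar x > \vect 0_n$ yields $0 = v^\top M(\bar x)\bar x < 0$, a contradiction. This establishes that $\vect 0_n$ is the unique equilibrium.

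For global convergence, the plan is to compare \eqref{eq:network_SIS_contact_control} with the uncontrolled SIS system $\dot y = (-D + B - YB)y$, $y(0) = x(0) \in \Xi_n$, which satisfies $\lim_{t\to\infty} y(t) = \vect 0_n$ by Proposition~\ref{prop:SIS_network_convergence}. Both vector fields have Metzler Jacobians on $\Xi_n$ (the off-diagonal entries $(1-x_i)b_{ij} \geq 0$ are unchanged by the addition of the diagonal $-H(x)$), so both systems are cooperative, and the controlled vector field is dominated entrywise by the uncontrolled one, since their difference is $-H(x)x \leq \vect 0_n$ on $\Xi_n$. The standard comparison principle for cooperative systems then gives $\vect 0_n \leq x(t) \leq y(t)$ for all $t \geq 0$, and a squeeze argument concludes that $x(t) \to \vect 0_n$. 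This step is largely routine once Proposition~\ref{prop:SIS_network_convergence} is invoked; the technical heart of the proof is the strict-inequality computation in the uniqueness argument above.
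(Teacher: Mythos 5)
Your proof is correct, but both halves take a somewhat different technical route from the paper's. For uniqueness, the paper also first locates any nonzero equilibrium in $\intr(\Xi_n)$ and works with $D+H(x^*)-(I_n-X^*)B$, but it reaches the contradiction through spectral-radius monotonicity (from $s(-D+B)\leq 0$ and $I_n-X^*$ strictly positive diagonal it gets $s((I_n-X^*)B)<s(B)$, then invokes Lemma~\ref{lem:M_matrix_singular_irr} to conclude the matrix is a \emph{nonsingular} irreducible $M$-matrix, which cannot annihilate a positive vector), whereas you run a left Perron eigenvector of $-D+B$ through the equilibrium identity and derive $0=v^\top M(\bar x)\bar x<0$ directly; both are Perron--Frobenius arguments of comparable weight, and your strict-inequality computation is sound (every column of $B$ is nonzero by strong connectivity). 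For convergence, the difference is more substantive: the paper compares against the \emph{linear} system $\dot y=(-D+B)y$, which settles the case $s(-D+B)<0$ immediately but leaves the critical case $s(-D+B)=0$ to a "similarly argued" remark (the linear comparison system does not itself decay to zero there), whereas you compare against the uncontrolled nonlinear SIS system and invoke Proposition~\ref{prop:SIS_network_convergence}, which covers $s(-D+B)\leq 0$ uniformly. Your route buys a cleaner treatment of the marginal case at the price of leaning on the full strength of Proposition~\ref{prop:SIS_network_convergence} and on the quasimonotone comparison principle, which the paper also uses implicitly but never formally states; citing a precise comparison theorem (e.g.\ from \cite{smith2008monotone_book}) would tighten that step.
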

\begin{proof}
	Suppose that $x^*$ is a nonzero equilibrium of \eqref{eq:network_SIS_contact_control}. A simple adjustment to Lemma~\ref{lem:positive_invariance} yields that $\vect 0_n < x^* < \vect 1_n$. If $s(-D+B) \leq 0$, then according to Lemma~\ref{lem:M_matrix_singular}, $D - B$ is an irreducible $M$-matrix. Since $I_n - X^*$ is a strictly positive diagonal matrix, $s((I_n - X^*)B) < s(B)$ according to \cite[Corollary 2.1.5]{berman1979nonnegative_matrices}. Combining this with the fact that $H(x^*)$ is nonnegative diagonal, we can use Lemma~\ref{lem:M_matrix_singular_irr} and the definition of an $M$-matrix at the start of Section~\ref{sec:epidemics} to conclude that $D + H(x^*) -(I_n - X^*)B$ is an irreducible nonsingular $M$-matrix. However, the nonsingularity property contradicts the assumption that $x^* > \vect 0_n$ satisfies $(D + H(x^*) -(I_n - X^*)B)x^* = \vect 0_n$ according to \eqref{eq:network_SIS_contact_control}. Thus, there are no endemic equilibria when $s(-D+B) \leq 0$.
	
	From \eqref{eq:network_SIS_contact_control}, we obtain that $\dot{x} \leq \dot{y} = (-D +B)y$ because $I_n - X(t)$ is a diagonal matrix with diagonal entries in $[0, 1]$, and $H(x(t))$ is nonpositive. If $s(-D+B) < 0$, then $-D + B$ is Hurwitz, and initialising $\dot{y} = (-D +B)y$ with $y(0) = x(0)$  yields $\lim_{t\to\infty} x(t) = \vect 0_n$. Convergence when $s(-D +B) = 0$ can be similarly argued.
\end{proof}

The following theorem identifies the outcome of using \eqref{eq:gen_controller} to control \eqref{eq:network_SIS_contact_control} when $s(-D + B) > 0$.

\begin{theorem}\label{thm:impossibility}
	Consider the system \eqref{eq:network_SIS_contact_control}, with $\mathcal{G} = (\mathcal{V}, \mathcal{E}, B)$ strongly connected, and $\Xi_n$ defined in \eqref{eq:Xi}. Suppose that $s(-D + B) > 0$, and that for all $i\in \mathcal{V}$, $h_i : [0,1] \to \mathbb{R}_{\geq 0}$ is bounded, smooth and monotonically nondecreasing, satisfying \mbox{$h_i(0) = 0$}. Then, 
	\begin{enumerate}
		\item In $\Xi_n$, \eqref{eq:network_SIS_contact_control} has two equilibria: $x = \vect 0_n$, and a unique endemic equilibrium $x^* \in \intr(\Xi_n)$, which is unstable and locally exponentially stable, respectively.
		\item For all $x(0) \in \Xi_n \setminus \vect 0_n$, there holds $\lim_{t\to\infty} x(t) = x^*$ exponentially fast.
	\end{enumerate}
\end{theorem}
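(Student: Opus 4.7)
The plan is to combine Theorem~\ref{thm:unique} (for part 1) with monotone systems theory (for part 2), paralleling the argument of Theorem~\ref{thm:SIS_ph_unique} while carefully tracking how the additional control term $H(x)$ alters the key computations. First, I would dispense with the easy claim about instability of $\vect 0_n$: since $h_i(0)=0$, the linearization of \eqref{eq:network_SIS_contact_control} at the origin is $df_{\vect 0_n} = -D+B$, and the hypothesis $s(-D+B)>0$ yields instability. Next, I would verify that Lemma~\ref{lem:positive_invariance} carries over essentially verbatim: on the upper face $Q_i$ the added term $-h_i(x_i)x_i \le 0$ only strengthens the inward flow, while on the lower face $P_i$ the same term is $O(\epsilon^2)$ by smoothness of $h_i$ and $h_i(0)=0$, so it is dominated by the strictly positive uncontrolled contribution when $\epsilon$ is chosen small enough. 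Hence a smoothed manifold $\tilde{\mathcal M}_\epsilon$ is positively invariant with $f$ pointing inward on $\partial\tilde{\mathcal M}_\epsilon$, and any nonzero equilibrium must lie in $\intr(\tilde{\mathcal M}_\epsilon)$.

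I would then apply Theorem~\ref{thm:unique} on $\tilde{\mathcal M}_\epsilon$. Any equilibrium $\bar x \in \intr(\tilde{\mathcal M}_\epsilon)$ satisfies $F(\bar x)\bar x = \vect 0_n$ with $F(\bar x) \triangleq D + H(\bar x) - (I_n-\bar X)B$. Mimicking the argument in Theorem~\ref{thm:SIS_ph_unique}, $-F(\bar x)$ is irreducible Metzler and $\bar x>\vect 0_n$ is a positive Perron eigenvector for the eigenvalue $0$, so Lemmas~\ref{lem:metzler}--\ref{lem:M_matrix_singular} identify $F(\bar x)$ as a singular irreducible $M$-matrix. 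Differentiating $f$ yields $df_{\bar x} = -F(\bar x) - H'(\bar x)\bar X - \diag(B\bar x)$; the correction $H'(\bar x)\bar X + \diag(B\bar x)$ is a nonnegative diagonal matrix, and it is strictly positive in at least one entry because $B$ irreducible together with $\bar x > \vect 0_n$ gives $B\bar x > \vect 0_n$. Lemma~\ref{lem:M_matrix_singular_irr} then forces $-df_{\bar x}$ to be a nonsingular $M$-matrix, so $df_{\bar x}$ is Hurwitz. Theorem~\ref{thm:unique} delivers a unique $x^*\in\intr(\tilde{\mathcal M}_\epsilon)\subset\intr(\Xi_n)$ that is locally exponentially stable, completing part 1.

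For the global convergence in part 2, I would exploit the cooperative structure of \eqref{eq:network_SIS_contact_control}. Inspection of $df_x$ shows the off-diagonal entries are $(1-x_i)b_{ij}\ge 0$ on $\Xi_n$, so the system is cooperative, and strong connectedness of $\mathcal G[B]$ makes it irreducible, hence strongly order-preserving. I would then set up a squeeze: let $y>\vect 0_n$ be the Perron eigenvector with $(-D+B)y=\phi y$ and consider trajectories $x^-(t)$ starting at $\alpha y$ (with $\alpha>0$ small) and $x^+(t)$ starting at $\vect 1_n$. A direct computation shows $\dot x^-(0)\approx \alpha\phi y > \vect 0_n$ for small $\alpha$, while $\dot x^+(0) = -(D+H(\vect 1_n))\vect 1_n < \vect 0_n$; by a standard monotonicity argument both trajectories are monotone in $t$, bounded, hence converge to equilibria, which must be $x^*$ (since $\vect 0_n$ is excluded by positivity of $x^-$ and irreducibility rules it out for $x^+$). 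For arbitrary $x(0)\in\Xi_n\setminus\vect 0_n$, the invariance analysis gives $x(t_0)\in\intr(\tilde{\mathcal M}_\epsilon)$ for some finite $t_0$, and then $\alpha y \le x(t_0) \le \vect 1_n$ for $\alpha$ small enough; cooperativity preserves the order, producing the squeeze $x^-(t-t_0)\le x(t)\le x^+(t-t_0)$ and hence $x(t)\to x^*$.

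The main obstacle will be upgrading this asymptotic convergence to \emph{exponential} convergence on all of $\Xi_n\setminus\vect 0_n$, since Theorem~\ref{thm:unique} only guarantees local exponential stability at $x^*$. My plan here is to leverage the fact that $-df_{x^*}$ is a nonsingular irreducible $M$-matrix, so its left Perron eigenvector $v>\vect 0_n$ associated with $-\gamma \triangleq s(df_{x^*})<0$ yields a weighted $\ell^\infty$-type function $V(x) = \max_i |x_i-x^*_i|/v_i$ that decays at rate $\gamma$ along the linearization; using monotonicity one can show the envelopes $x^\pm(t)$ satisfy $\|x^\pm(t)-x^*\|\le Ce^{-\gamma t}$ globally (the monotone approach guarantees that $x^\pm$ enter any local exponential basin in finite time, and the pre-basin portion contributes only a bounded multiplicative constant). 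The sandwich then transmits the exponential rate to $x(t)$ itself. Establishing this carefully — in particular bounding the finite transient uniformly — is the delicate technical step, but no new conceptual tools beyond monotone systems theory and the linearization at $x^*$ are required.
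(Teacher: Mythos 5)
Your Part 1 is correct and follows essentially the same path as the paper: carry Lemma~\ref{lem:positive_invariance} over to the controlled dynamics (upper faces strengthened, lower faces preserved because $h_i(\epsilon y_i)\epsilon y_i = O(\epsilon^2)$), write the Jacobian at any putative equilibrium as $df_{\bar x} = -\bigl(F(\bar x)+\Delta(\bar x)+\Gamma(\bar x)\bigr)$ with $F(\bar x)$ a singular irreducible $M$-matrix and the diagonal correction nonnegative with a positive entry, then invoke Lemma~\ref{lem:M_matrix_singular_irr} and Theorem~\ref{thm:unique}; the instability of $\vect 0_n$ via $df_{\vect 0_n} = -D+B$ is also exactly the paper's argument.

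Part 2 is where you take a genuinely different route, and it works. The paper does not build sub- and super-solutions: it applies Proposition~\ref{prop:conv} (its extension of the generic-convergence result, Lemma~\ref{lem:monotone_converge}, proved in Appendix~A by excluding nonattractive limit cycles once the equilibrium in the order-bounded invariant set is unique) to get asymptotic convergence on $\intr(\tilde{\mathcal{M}}_\epsilon)$ in one stroke, and then upgrades to an exponential rate by compactness: every trajectory enters the local exponential basin of $x^*$ within a uniform time $\bar T$, so the transient is absorbed into the multiplicative constant. Your route is the classical Lajmanovich--Yorke-style sandwich: $f(\alpha y)>\vect 0_n$ for small $\alpha$ and $f(\vect 1_n)<\vect 0_n$ yield monotone envelope trajectories that converge to equilibria, identified with $x^*$ by uniqueness, after which order preservation squeezes every other trajectory. (One small imprecision: ``irreducibility'' is not what excludes $\vect 0_n$ as the limit of the upper envelope; the clean reason, already available to you, is that $\mathcal{M}_\epsilon$ is positively invariant, so $x^+(t)\geq \epsilon y>\vect 0_n$ for all $t$.) Your approach is more elementary and self-contained -- it needs neither the generic-convergence theorem nor the limit-cycle exclusion -- at the cost of requiring explicit sub/super-equilibria, which exist here because the vector field points inward at the corners $\epsilon y$ and $\vect 1_n$; the paper's Proposition~\ref{prop:conv} is the more reusable tool, needing only uniqueness of the equilibrium in a convex invariant set. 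Your exponential upgrade (the envelopes enter the local basin in finite time and the sandwich transmits the rate) is sound and is essentially the paper's compactness argument specialised to two trajectories; the weighted $\ell^\infty$ Lyapunov function you sketch is not actually needed. Both proofs go through.
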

\begin{remark}
	Theorem~\ref{thm:impossibility} establishes two key properties of the SIS model under feedback control. Item 1 indicates that a stable healthy state cannot be achieved, and a unique endemic equilibrium $x^*$ that is locally exponentially stable continues to exist; it is impossible for the decentralised feedback control to globally stabilise the system to the healthy equilibrium. Item 2 establishes a large region of attraction of the endemic equilibrium. In the sequel, we will show that  feedback control ``improves'' the limiting behaviour: the controlled system converges to an endemic equilibrium which is closer to the origin than the endemic equilibrium of the uncontrolled system.
\end{remark}
\begin{proof}
	The proof consists of two parts. In \textit{Part 1}, we establish the existence and uniqueness of the endemic equilibrium $x^* \in \intr(\Xi_n)$, and the local stability properties of $x^*$ and $\vect 0_n$. In \textit{Part 2}, we establish the convergence to $x^*$.
	
	\textit{Part 1:} Under the theorem hypothesis, $H(x(t))$ is a nonnegative diagonal matrix. It can be shown that if $s(-D + B) > 0$, then Lemma~\ref{lem:positive_invariance} continues to hold when replacing \eqref{eq:SIS_contact_network} with \eqref{eq:network_SIS_contact_control}. Only simply adjustments to the proof of Lemma~\ref{lem:positive_invariance} are needed, which we omit for brevity. To summarise, there exists a sufficiently small $\epsilon > 0$ such that $\mathcal{M}_{\epsilon}$ in \eqref{eq:M_definition} and  $\intr(\mathcal{M}_{\epsilon})$ are both positive invariant sets of \eqref{eq:network_SIS_contact_control}, and for every $x \in \partial\mathcal{M}_{\epsilon}$,
	\begin{equation}\label{eq:vf_SIS_control}
	f(x) = (- D - H(x) + B - X B) x
	\end{equation}
	points inward to $\mathcal{M}$. Similar to the discussion above Theorem~\ref{thm:SIS_ph_unique}, we can obtain from $\mathcal{M}_{\epsilon}$ a smooth and compact manifold $\tilde{\mathcal{M}}_{\epsilon}$, with the property that $f(x)$ in \eqref{eq:vf_SIS_control} also points inward for every $x\in \partial\tilde{\mathcal{M}}_{\epsilon}$. Thus, both $\tilde{\mathcal{M}}_{\epsilon}$ and $\intr(\tilde{\mathcal{M}}_{\epsilon})$ are positive invariant sets of \eqref{eq:network_SIS_contact_control}.  Moreover, there exists a finite $\kappa$ such that for all $x(0) \in \partial \Xi_n\setminus \vect 0_n$, there holds $x(\kappa) \in \tilde{\mathcal{M}_{\epsilon}}$. This implies that any nonzero equilibrium of \eqref{eq:network_SIS_contact_control} must be in $\intr(\tilde{\mathcal{M}}_{\epsilon}) \subset \intr(\Xi_n)$.
	
	Now, suppose that $\tilde x \in \intr(\tilde{\mathcal{M}}_{\epsilon})$ is an equilibrium of \eqref{eq:network_SIS_contact_control}. Then, $\tilde x$ must satisfy $\vect 0_n < \tilde x < \vect 1_n$ and
	\begin{equation}\label{eq:endemic_equib_control}
	\vect 0_n = (- D - H(\tilde x) + (I_n - \tilde X) B)\tilde x.
	\end{equation}
	This implies that $I_n - \tilde X$ is a positive diagonal matrix, and because $B \geq \vect 0_{n\times n}$ is irreducible, $(I_n - \tilde X)B$ is also an irreducible nonnegative matrix. Let us define for convenience $F(x) \triangleq  D + H(x) - (I_n - X)B$. Obviously, $F(x)\,\forall\,x\in \tilde{\mathcal{M}}_{\epsilon}$ has off-diagonal entries that are all nonpositive, and it follows that $-F(\tilde x)$ is a \textit{Metzler} matrix for any equilibrium $\tilde x \in \tilde{\mathcal{M}}_{\epsilon}$. Lemma~\ref{lem:metzler} and \eqref{eq:endemic_equib_control} indicate that $s(-F(\tilde x)) = 0$, and as a consequence, we can use Lemma~\ref{lem:M_matrix_singular} to conclude that $F(\tilde x)$ is a singular irreducible $M$-matrix.
	
	Define 
	\begin{equation}
	\Gamma(x) = \diag(\frac{\partial h_1}{\partial x_1} x_1, \hdots, \frac{\partial h_n}{\partial x_n} x_n),
	\end{equation} 
	and because $h_i$ is monotonically nondecreasing in $x_i$, $\Gamma(x)$ is a nonnegative diagonal matrix for all $x \in \tilde{\mathcal{M}}_{\epsilon}$. The Jacobian of \eqref{eq:network_SIS_contact_control} at a point $x\in\tilde{\mathcal{M}}_{\epsilon}$ is given by 
	\begin{align}
	df_x & = - D - H(x) + B - XB - \Delta(x) - \Gamma(x) \nonumber \\
	& = - \left(F(x) + \Delta(x) +  \Gamma(x) \right) \label{eq:jacobian_control}
	\end{align}
	where $\Delta(x) = \sum_{i=1}^n \big(\sum_{j=1}^n b_{ij} x_j \big) \mathbf{e}_i\mathbf{e}_i^\top$ is a diagonal matrix. Because $B$ is irreducible, there exists for all $i = 1, \hdots, n$, a $k_i$ such that $b_{ik_i} > 0$. This implies that for all $x\in \tilde{\mathcal{M}}_{\epsilon}$ there holds  $\sum_{j=1}^n b_{ij} x_j \geq b_{ik_i} x_{k_i} > 0$. It follows that $\Delta(x)$ is a positive diagonal matrix for all $x\in \tilde{\mathcal{M}}_{\epsilon}$. Lemma~\ref{lem:M_matrix_singular_irr} establishes that $F(\tilde x) + \Delta(\tilde x) + \Gamma(\tilde x)$ is a nonsingular $M$-matrix, with eigenvalues having strictly positive real parts. This implies that $df_{\tilde x}$ is Hurwitz for all $\tilde x \in \tilde{\mathcal{M}}_{\epsilon}$ satisfying \eqref{eq:endemic_equib_control}. Application of Theorem~\ref{thm:unique} establishes that there is in fact a unique equilibrium $x^* \in \intr(\tilde{\mathcal{M}}_{\epsilon}) \subset \intr(\Xi_n)$, and $x^*$ is locally exponentially stable. 
	
	Consider now the healthy equilibrium $\vect 0_n$. Notice that $df_{\vect 0_n} = -F(\vect 0_n) = - D + B$. Since $s(- D + B) > 0$ by hypothesis, the Linearization Theorem \cite[Theorem 5.42]{sastry1999nonlinearbook} yields that $\vect 0_n$ is an unstable equilibrium of \eqref{eq:network_SIS_contact_control}.
	
	\textit{Part 2:} 
	We established above that there exists a finite $\kappa$ such that $x(\kappa) \in \tilde{\mathcal{M}_{\epsilon}}$ for all $x(0) \in \partial \Xi_n\setminus \vect 0_n$. To complete the proof, we only need to show that $\lim_{t\to\infty} x(t) = x^*$ for all $x(0) \in \intr(\tilde{\mathcal{M}}_{\epsilon})$. We shall use key results from the theory of monotone dynamical systems, the details of which are presented Appendix~\ref{app:monotone}.
	
	First, notice that $df_x$ in \eqref{eq:jacobian_control} is an irreducible matrix with all off-diagonal entries nonnegative for all $x \in \intr(\tilde{\mathcal{M}}_{\epsilon})$. Thus, \eqref{eq:network_SIS_contact_control} is a $\mathbb{R}^n_{\geq 0}$ monotone system in $\intr(\tilde{\mathcal{M}}_{\epsilon})$ (see Lemma~\ref{lem:monotone} in Appendix~\ref{app:monotone}, and use $P_m = I_n$). Since $x^*$ is the unique equilibrium of \eqref{eq:network_SIS_contact_control} in the open, bounded and positive invariant set $\intr(\tilde{\mathcal{M}}_{\epsilon}) \subset \mathbb{R}^n_{\geq 0}$, Proposition~\ref{prop:conv} in Appendix~\ref{app:monotone} yields $\lim_{t\to\infty} x(t) = x^*$ asymptotically\footnote{As detailed in Appendix~\ref{app:monotone}, Proposition~\ref{prop:conv} is an extension of a well known result, viz. Lemma~\ref{lem:monotone_converge}, when there is a unique equilibrium.} for all $x(0) \in \tilde{\mathcal{M}}_{\epsilon}$. It remains to prove the convergence is exponentially fast.

	
	Since $df_{x^*}$ is Hurwitz, let $\mathcal{B}$ denote the locally exponentially stable region of attraction of $x^*$. For every $x_0 \in \tilde{\mathcal{M}}_{\epsilon}$, the fact that $\lim_{t\to\infty} x(t) = x^*$ implies that there exists a finite $T_{x_0} \geq 0$ such that $x(0) = x_0$ for \eqref{eq:network_SIS_contact_control} yields $x(t) \in \mathcal{B}$ for all $t\geq T_{x_0}$. Now, $\tilde{\mathcal{M}}_{\epsilon}$ is compact, which implies that there exists a $\bar T \geq \max_{x_0 \in \tilde{\mathcal{M}}_{\epsilon}} T_{x_0}$ such that for all $x(0) \in \tilde{\mathcal{M}}_{\epsilon}$, there holds $x(t) \in \mathcal{B}$ for all $t \geq \bar T$. In other words, there exists a time $\bar T$ independent of $x(0)$, such that any trajectory of \eqref{eq:network_SIS_contact_control} beginning in $\tilde{\mathcal{M}}_{\epsilon}$ enters the region of attraction $\mathcal{B}$ of the locally exponentially stable equilibrium $x^*$. Because $\bar T$ is independent of the initial conditions, there exist positive constants $\alpha_1$ and $\alpha_2$ such that 
		\begin{equation*}
		\Vert x(t) - x^* \Vert \leq \alpha_1 e^{-\alpha_2 t} \Vert x(0) - x^* \Vert
		\end{equation*} 
	for all $x(0) \in \tilde{\mathcal{M}}_{\epsilon}$ and $t\geq 0$. I.e., $\lim_{t\to\infty} x(t) = x^*$ exponentially fast for all $x(0) \in \tilde{\mathcal{M}}_{\epsilon}$.	
\end{proof}

We conclude our analysis of the controlled SIS network model by establishing that decentralised feedback control always pushes the endemic equilibrium closer to the healthy equilibrium (the proof is given in Appendix~\ref{app:lem_pf_eqb}):
\begin{lemma}\label{lem:new_eqb}
	Consider the system \eqref{eq:network_SIS_contact_control}, with $\mathcal{G} = (\mathcal{V}, \mathcal{E}, B)$ strongly connected, and $\Xi_n$ defined in \eqref{eq:Xi}. Suppose that $s(-D + B) > 0$, and that for all $i\in \mathcal{V}$, $h_i : [0,1] \to \mathbb{R}_{\geq 0}$ is bounded, smooth and monotonically nondecreasing, satisfying \mbox{$h_i(0) = 0$} and $\exists j: x_j > 0 \Rightarrow h_j(x_j) > 0$. Let $x^*$ and $\bar x^*$ denote the unique endemic equilibrium of \eqref{eq:SIS_contact_network} and \eqref{eq:network_SIS_contact_control}, respectively. Then, $\bar x^* < x^*$.
\end{lemma}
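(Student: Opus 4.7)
The plan is to exploit the monotone systems structure of \eqref{eq:network_SIS_contact_control} already established in the proof of Theorem~\ref{thm:impossibility}, combined with the observation that $x^*$ behaves like a strict super-solution for the controlled dynamics. First I would evaluate the controlled vector field $\bar f(x) \triangleq (-D - H(x) + B - XB)x$ at the uncontrolled endemic equilibrium $x^*$. Since $f(x^*) = (-D + B - X^* B)x^* = \vect 0_n$ and $\bar f(x) = f(x) - H(x)x$, we immediately get $\bar f(x^*) = -H(x^*)x^*$. Because $x^* \in \intr(\Xi_n)$ implies $x^*_j > 0$ for every $j$, and the hypothesis guarantees at least one index $j_0$ with $h_{j_0}(x^*_{j_0}) > 0$, this yields $\bar f(x^*) \leq \vect 0_n$ with strict inequality in the $j_0$-th component.

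Next I would let $\bar x(t)$ denote the solution of \eqref{eq:network_SIS_contact_control} with $\bar x(0) = x^*$. Since \eqref{eq:network_SIS_contact_control} is a monotone system on $\intr(\tilde{\mathcal{M}}_{\epsilon})$ (as shown in \textit{Part 2} of the proof of Theorem~\ref{thm:impossibility}), a standard consequence of monotonicity is that $\dot{\bar x}(0) \leq \vect 0_n$ implies $\bar x(t)$ is nonincreasing in $t$ componentwise: applying the monotone comparison with the shifted trajectory $y(t) = \bar x(t+h)$ for small $h>0$ shows $\bar x(t+h) \leq \bar x(t)$ for all $t\geq 0$. Combined with $\lim_{t\to\infty} \bar x(t) = \bar x^*$ from Theorem~\ref{thm:impossibility}, I would conclude $\bar x^* \leq x^*$. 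Furthermore, the strict decrease of the $j_0$-th component at $t = 0^+$, together with the nonincreasing property, gives $\bar x^*_{j_0} < x^*_{j_0}$ strictly.

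To upgrade the inequality to strict componentwise $\bar x^* < x^*$, I would proceed by contradiction. Suppose $S \triangleq \{k : \bar x^*_k = x^*_k\}$ is nonempty. Subtracting the scalar equilibrium equations for $x^*$ and $\bar x^*$ at index $k \in S$ and using $\bar x^*_k = x^*_k$ yields
\begin{equation*}
h_k(x^*_k) x^*_k + (1 - x^*_k) \sum_{m \in \mathcal{N}_k} b_{km} (x^*_m - \bar x^*_m) = 0.
\end{equation*}
Since $x^*_k \in (0,1)$, every term on the left is nonnegative (using $\bar x^* \leq x^*$ from the previous paragraph), forcing $h_k(x^*_k) = 0$ and $\bar x^*_m = x^*_m$ for every $m \in \mathcal{N}_k$ with $b_{km} > 0$. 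Hence $S$ is closed under taking in-neighbours in $\mathcal{G}[B]$, and by strong connectivity (irreducibility of $B$), $S = \mathcal{V}$, giving $\bar x^* = x^*$ and $h_i(x^*_i) = 0$ for every $i$. This contradicts the hypothesis that some index $j$ satisfies $x^*_j > 0 \Rightarrow h_j(x^*_j) > 0$, since all components of $x^*$ are strictly positive.

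The main obstacle is the propagation step in the third paragraph: it is tempting to think monotonicity alone delivers strict inequality everywhere, but monotone convergence only gives $\bar x^*_{j_0} < x^*_{j_0}$ at the single actively-controlled index. The cascade of equalities along the strongly connected digraph is the essential ingredient that turns the localised control effect at node $j_0$ into a system-wide strict reduction, and it relies crucially on the irreducibility of $B$ together with the fact that $x^* \in \intr(\Xi_n)$ (so $1 - x^*_k > 0$ never trivialises the coupling).
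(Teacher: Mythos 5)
Your proof is correct, but it follows a genuinely different route from the paper's. The paper proves the lemma by a continuation argument: it freezes the control at its equilibrium value $\bar H = \diag(h_1(\bar x_1^*),\hdots,h_n(\bar x_n^*))$, introduces the interpolated system $\dot x = (-D-\alpha\bar H+(I_n-X)B)x$ with $\alpha\in[0,1]$, uses Perron--Frobenius monotonicity of $s(-D-\alpha\bar H+B)$ to guarantee a unique interior equilibrium $\bar x_\alpha$ joining $\bar x_0=x^*$ to $\bar x_1=\bar x^*$, and then differentiates the equilibrium condition in $\alpha$ to obtain $\frac{d\bar x_\alpha}{d\alpha}=-K_\alpha^{-1}\bar H\bar x_\alpha<\vect 0_n$, where strict negativity in every component comes directly from $K_\alpha^{-1}>\mat 0_{n\times n}$ for an irreducible nonsingular $M$-matrix. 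You instead run a trajectory argument: $x^*$ is a strict super-equilibrium for the controlled field ($\bar f(x^*)=-H(x^*)x^*\leq \vect 0_n$, strict at some $j_0$), so by cooperativity the solution from $x^*$ decreases monotonically to $\bar x^*$, giving $\bar x^*\leq x^*$, and you then upgrade to strict inequality by propagating equalities backwards along the strongly connected graph. What the paper's approach buys is that the componentwise strictness falls out in one stroke from the positivity of the inverse $M$-matrix; what your approach buys is that it reuses the monotone-systems machinery already assembled for Theorem~\ref{thm:impossibility} and avoids the implicit differentiation of the equilibrium branch. One small point to tighten: your justification that the trajectory from $x^*$ is componentwise nonincreasing (``apply the monotone comparison to the shifted trajectory'') presupposes $\bar x(h)\leq \bar x(0)$ for small $h>0$, which is not immediate in the components where $\bar f(x^*)_i=0$; the clean statement is that for a cooperative system the set $\{x:\bar f(x)\leq \vect 0_n\}$ is positively invariant (equivalently, $z(t)=\bar f(\bar x(t))$ satisfies the linear cooperative variational equation and so stays in the nonpositive orthant), which is a standard result you should cite or reproduce rather than the shift heuristic. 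With that reference supplied, the argument is complete.
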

It is worth noting that the presence of a single node $j$ with positive control, i.e. $x_j > 0 \Rightarrow h_j(x_j) > 0$, leads to an improvement for \textit{every} node $i$, i.e., $\bar x_i^* < x_i^*$. This would not be expected if $\mathcal{G}$ was not strongly connected.

\subsection{An Illustrative Simulation Example}
In this subsection, we provide a simple simulation example of a controlled SIS system \eqref{eq:network_SIS_contact_control} with $n = 2$ nodes. The aim is to illustrate the impact on the SIS network dynamics via the introduction of feedback control, to provide an intuitive explanation and discuss the implications of Theorem~\ref{thm:impossibility} and Lemma~\ref{lem:new_eqb}. We therefore choose the parameters and specific control functions $h_i$ arbitrarily; the salient conclusions presented below are unchanged for many other choices of parameters and controllers. We set 
\begin{equation*}
	D = \begin{bmatrix}0.3 & 0 \\ 0 & 0.8\end{bmatrix},\quad B = \begin{bmatrix}0.2 & 0.5 \\ 0.7 & 0.1\end{bmatrix},
\end{equation*}
which yields $s(- D + B) = 0.2633$. 

When there is no control, i.e. $h_1(x_1) \equiv h_2(x_2) \equiv 0$, the vector-valued function 
\begin{equation}\label{eq:vf_SIS_original}
	f(x) = (-D + (I_n-X)B)x,
\end{equation}
defines the dynamics of \eqref{eq:SIS_contact_network}, and represents the vector field shown in Fig.~\ref{fig:SIS_nocontrol}. Since $s(- D + B) > 0$, the endemic equilibrium $x^* = [0.4413, 0.2973]^\top$ (the red dot) is attractive for all $x(0)\in \Xi_n \setminus \vect 0_n$, as per Proposition~\ref{prop:SIS_network_convergence}, Item 2, and Theorem~\ref{thm:impossibility}. 

We then introduce the feedback controllers $h_1(x_1) = 0.5{x_1}^{0.5}$ and $h_2(x_2) = 0.9x_2$ into the SIS network model, given in \eqref{eq:network_SIS_contact_control}. The resulting vector field is shown Fig.~\ref{fig:SIS_control}, represented by the vector-valued function
\begin{equation}\label{eq:vf_SIS_new}
	\bar f(x) = (-D - H(x) + (I_n-X)B)x.
\end{equation}
Although the introduction of the $h_i$ has modified the vector-valued function to become $\bar f$, there remains a unique zero in $\intr(\Xi_n)$. In the context of the SIS model, there is a unique endemic equilibrium $\bar x^* = [0.15, 0.1142]^\top$, and consistent with Theorem~\ref{thm:impossibility}, all trajectories with $x(0)\in \Xi_n \setminus \vect 0_n$ converge to $\bar x^*$.
Comparing Fig.~\ref{fig:SIS_nocontrol} and \ref{fig:SIS_control}, one sees that the feedback control has shifted the endemic equilibrium from $x^*$ to $\bar x^*$, which clearly obeys the inequality $\bar x^* < x^*$ as detailed in Lemma~\ref{lem:new_eqb}. 

\begin{figure}
	\centering
	\includegraphics[width=0.65\linewidth]{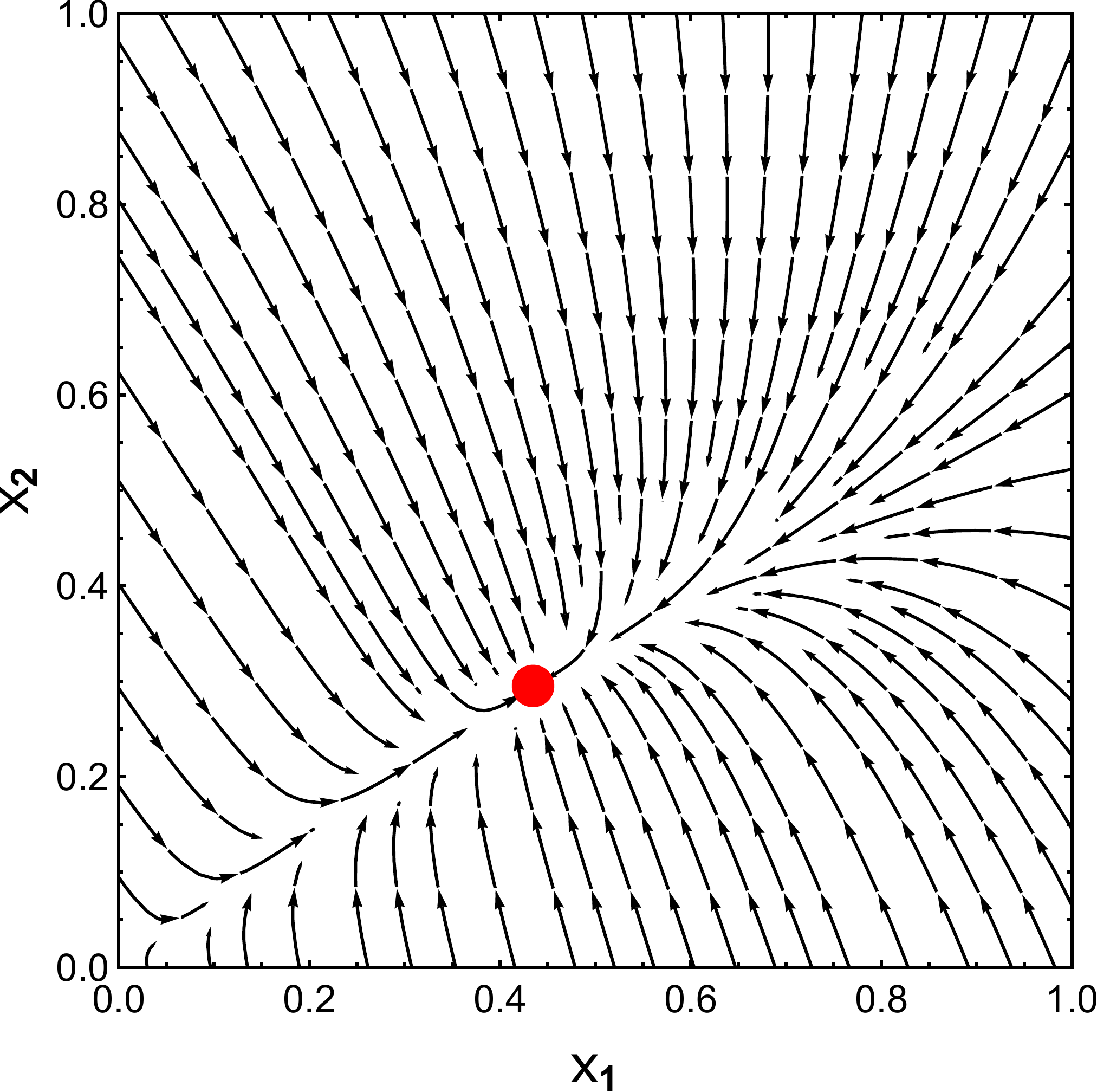}
	\caption{Vector field of an uncontrolled SIS network model with $2$ nodes. The red dot identifies the unique endemic equilibrium $x^* =[0.4413, 0.2973]^\top$. }
	\label{fig:SIS_nocontrol}
\end{figure}

\begin{figure}
	\centering
	\includegraphics[width=0.65\linewidth]{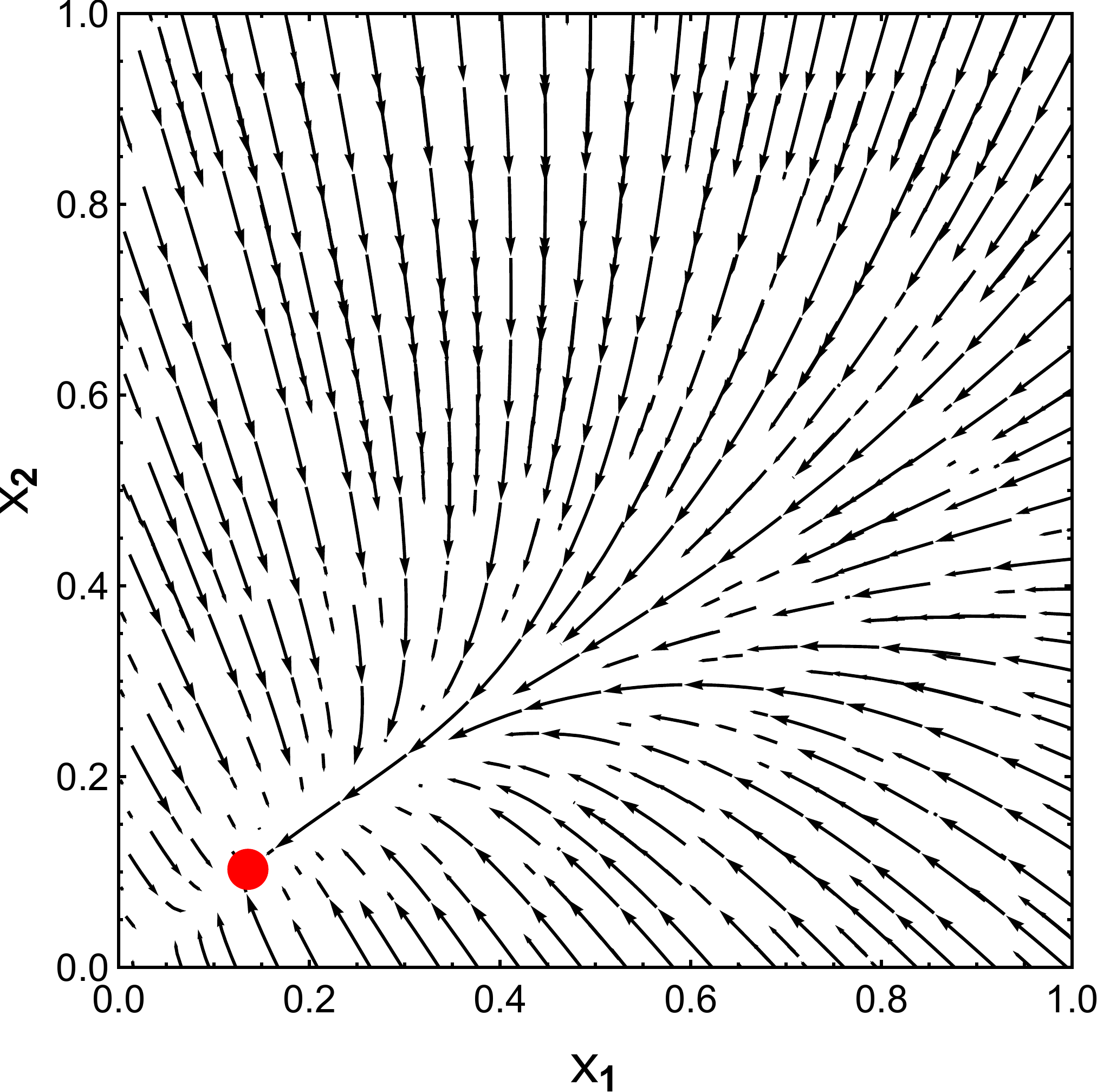}
	\caption{Vector field of a controlled SIS network model with $2$ nodes. The red dot identifies the unique endemic equilibrium $\bar x^* = [0.15, 0.1142]^\top$. Although the feedback control $h_i(x_i(t))$ shifts $\bar x^*$ closer to the origin (the healthy equilibrium) compared to $x^* = [0.4413, 0.2973]^\top$ of the uncontrolled network (see Fig.~\ref{fig:SIS_nocontrol}), all trajectories of the controlled SIS network converge to $\bar x^*$ except $x(0) = \vect 0_2$.}
	\label{fig:SIS_control}
\end{figure}

%


To summarise, Theorem~\ref{thm:impossibility} provides us with conclusions on a broad class of decentralised feedback controllers. Specifically, if the underlying uncontrolled system has a unique endemic equilibrium (that is convergent for all $x(0)\in \Xi_n \setminus \vect 0_n$), then no matter how we design the control functions $h_i(x_i)$, there will always be a unique endemic equilibrium that is convergent for all $x(0)\in \Xi_n \setminus \vect 0_n$. In the language of Theorem~\ref{thm:unique}, introduction of $h_i$ modifies the vector-valued function $f$ in \eqref{eq:vf_SIS_original} to become $\bar f$ in \eqref{eq:vf_SIS_new}, but does not change the stability of the Jacobian at any zero in $\mathcal{M}$, preserving the uniqueness property. This demonstrates the challenge of attempting decentralised control where each node only utilises local state $x_i$ for information. Nonetheless, Lemma~\ref{lem:new_eqb} demonstrates that some health benefits are obtained, since there is a smaller fraction of infected individuals at each population.

From Theorem~\ref{thm:healthy}, the healthy equilibrium is globally asymptotically stable for the controlled network if and only if the \textit{underlying uncontrolled network} itself has the property that $x = \vect 0_n$ is globally asymptotically stable, i.e. $s(-D + B) \leq 0$. 
If $s(- D + B) > 0$, one may wish to consider other decentralised or distributed control methods, including controlling the infection rates as functions of $x$, e.g. $b_{ij}(x_j(t), x_i(t))$. If only local information $x_i(t)$ for population $i$ is available, one might require nonsmooth or time-varying or adaptive controllers; in this case the Poincar\'e-Hopf theorem, and consequently Theorem~\ref{thm:unique}, is not applicable (at least not without significant modifications). 

\subsection{Discussions on the Analysis Framework}
To conclude this section, we use the result of Theorem~\ref{thm:impossibility} to drive our discussion on the strengths and weaknesses of our analysis framework, including especially Theorem~\ref{thm:unique}.
	
Theorem~\ref{thm:unique} establishes local exponential stability of the unique equilibrium $x^*$, and notice that this does not exclude the possibility of limit cycles or chaos. Indeed, in the next section, we consider a different biological system model known to exhibit limit cycles and chaotic behaviour, and derive a sufficient condition for a unique equilibrium $x^*$ without requiring global convergence to $x^*$. However, if one believes there is global convergence, then \textit{proving} the uniqueness of $x^*$ can be a critical step, as it may inform of potential approaches for studying convergence.
In existing work, a sufficient condition is identified using monotone systems theory that guarantees  \eqref{eq:general_system} for all initial conditions either converges to $\vect 0_n$, or converges to a unique equilibrium $x^*$ in $\mathbb{R}^n_{> 0}$~\cite[Theorem 11]{jacquez1993qualitative_compartmental}. However, such a result does not help to establish whether  $x^*$ actually exists. Moreover, \cite[Theorem 11]{jacquez1993qualitative_compartmental} cannot be used to obtain Theorem~\ref{thm:impossibility}, since the required condition is satisfied for the uncontrolled SIS system \eqref{eq:SIS_contact_network}, but is not guaranteed to be satisfied for the controlled SIS system \eqref{eq:network_SIS_contact_control}.

One advantage of Theorem~\ref{thm:unique} is that it enables one to analyse modifications to existing models, without significantly changing the analysis technique. In Section~\ref{ssec:feedback}, we modified the standard SIS network model by adding decentralised feedback control, in the process destroying the quadratic character of the differential equation set. Application of Theorem~\ref{thm:unique} to cover a broad class of controllers involves only minor adjustments between the analysis of the uncontrolled network (Theorem~\ref{thm:SIS_ph_unique}) and the controlled network (Theorem~\ref{thm:impossibility}). The same technique as in Theorem~\ref{thm:impossibility} can also be used to analyse another variant of the SIS model, known as the SIS network model in a ``patchy environment''~\cite{wang2004epidemic_patchy,jin2005effect_patchy}, which aims to capture infection due to individuals travelling between population nodes. 

In contrast, the algebraic-based approach of \cite{khanafer2016SIS_positivesystems,mei2017epidemics_review,fall2007SIS_model} has the advantage of being able to explicitly compute (albeit in a centralised way) the endemic equilibrium for \eqref{eq:SIS_contact_network}, and examine weakly connected networks \cite{khanafer2016SIS_positivesystems} or nodes with zero recovery rates, $d_i = 0$ \cite{liu2019bivirus}. However, such an approach cannot be easily adapted to the patchy environment SIS model or handle broad classes of feedback control as in Section~\ref{ssec:feedback}. The Lyapunov-based approach of \cite{shuai2013epidemic_lyapunov} can be used to study the patchy environment models \cite{li2009patchy_SIR}, and has the advantage of simultaneously establishing the uniqueness of the endemic equilibrium $x^*$ and global convergence. However, the Lyapunov functions of \cite{shuai2013epidemic_lyapunov,li2009patchy_SIR} cannot be extended to consider the model with decentralised feedback control, as we did in Section~\ref{ssec:feedback}. One requires searching for a new Lyapunov function, which may be difficult to identify and also may not cover a broad class of controllers as we have.

One drawback of Theorem~\ref{thm:unique} is the need to identify a manifold $\mathcal{M}$ which has specific properties, such as being contractible and with $f$ pointing inward to $\mathcal{M}$ at every point on the boundary $\partial\mathcal{M}$. This is not always straightforward. Nonetheless, many models in the natural sciences will be well-defined for some positively invariant set of the state space, which may be used as a basis to inform the identification of $\mathcal{M}$. Once $\mathcal{M}$ has been identified, it may be reused for modified variants of the system. In Section~\ref{ssec:SIS_ph}, we identify an appropriate $\mathcal{M}$ for the uncontrolled SIS network model given in \eqref{eq:SIS_contact_network}; a near identical $\mathcal{M}$ can be used for the controlled SIS network model given in \eqref{eq:network_SIS_contact_control}.
 
Last, and as we will next demonstrate, Theorem~\ref{thm:unique} can be applied to other classes of models; one can view Theorem~\ref{thm:unique} as a useful tool in the first step in analysing general systems given by \eqref{eq:general_system}, and not as a stand-alone, all powerful result.  

\section{Lotka--Volterra Systems}\label{sec:other}
We now illustrate the versatility of Theorem~\ref{thm:unique} (from Section~\ref{sec:PH_general}), by applying it to a different model in the natural sciences. While Section~\ref{sec:epidemics} considers the SIS networked epidemic model, including with feedback control, this section considers the generalised nonlinear Lotka--Volterra model. We use Theorem~\ref{thm:unique} to identify conditions for the existence of a unique non-trivial equilibrium, and establish its local stability property. We require some additional linear algebra results. For a matrix $A\in\mathbb R^{n\times n}$, whose diagonal entries satisfy $a_{ii} > 0,\forall\,i = 1, \hdots, n$, consider the following four conditions:
\begin{enumerate}[label=C\arabic*]
	\item \label{C1}
	There exists a positive diagonal $D$ such that $AD$ is strictly diagonally dominant, i.e. there holds
	\begin{equation}\label{eq:ineq_diag_dom}
	d_ia_{ii}>\sum_{j\neq i}^n d_j\vert a_{ij}\vert  \,,\;\forall\, i=1,2,\dots, n.
	\end{equation}
	\item \label{C2}
	There exists a diagonal positive $C$ for which $AC+CA^{\top}$ is positive definite. 
	\item \label{C3}
	All the leading principal minors of $A$ are positive.
	\item \label{C4}
	$A$ is a nonsingular $M$-matrix (see Lemma~\ref{lem:M_matrix_singular}).
\end{enumerate}
Two simple lemmas flow from this.

\begin{lemma}[\hspace{-0.5pt}{\cite[Chapter 6]{berman1979nonnegative_matrices}}]\label{lem:matrix_conditions}
	Consider a matrix $A\in\mathbb R^{n\times n}$, with diagonal entries satisfying $a_{ii} > 0,\forall\,i = 1, \hdots, n$. Then, \ref{C1} $\Rightarrow$ \ref{C2} $\Rightarrow$ \ref{C3}. If further $A$ has all off-diagonal elements nonpositive, then \ref{C1} $\Leftrightarrow$ \ref{C2}  $\Leftrightarrow$ \ref{C3}  $\Leftrightarrow$ \ref{C4}.
\end{lemma}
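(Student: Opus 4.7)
The plan is to split the proof into two stages. First, I would establish the general implications \ref{C1}$\Rightarrow$\ref{C2}$\Rightarrow$\ref{C3}, which hold whenever $a_{ii}>0$. Second, under the additional hypothesis that $A$ has nonpositive off-diagonal entries (so $A$ is a $Z$-matrix), I would close the loop through \ref{C3}$\Leftrightarrow$\ref{C4}$\Rightarrow$\ref{C1}; this stage is entirely in the territory of classical M-matrix characterizations collected in \cite[Chapter 6]{berman1979nonnegative_matrices}, so my focus would be on the two general-case implications.

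For \ref{C2}$\Rightarrow$\ref{C3}, I would use a principal-submatrix argument. If $S\triangleq AC+CA^\top$ is positive definite, then its leading $k\times k$ principal submatrix is $S_k=A_k C_k+C_k A_k^\top$, where $A_k$ and $C_k$ are the corresponding leading principal submatrices of $A$ and $C$. A principal submatrix of a positive definite matrix is itself positive definite, so $S_k$ is positive definite. Reading $S_k$ as a Lyapunov equation certifies that every eigenvalue of $A_k$ lies in the open right half-plane, and therefore $\det(A_k)>0$, since non-real eigenvalues appear in complex-conjugate pairs and contribute positive factors to the determinant.

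The more delicate step is \ref{C1}$\Rightarrow$\ref{C2}. The naive attempt $C=D$ fails in general (it is easy to produce a $2\times 2$ counterexample with mixed-sign off-diagonals), so I would route through the comparison matrix $M(A)$ defined entrywise by $M(A)_{ii}=a_{ii}$ and $M(A)_{ij}=-|a_{ij}|$ for $i\neq j$. Condition \ref{C1} involves only $a_{ii}$ and $|a_{ij}|$, so it is inherited verbatim by $M(A)$. Since $M(A)$ is a $Z$-matrix and $M(A)(D\vect 1_n)>\vect 0_n$, the standard semi-positivity characterization in \cite{berman1979nonnegative_matrices} yields that $M(A)$ is a nonsingular M-matrix, and by the classical diagonal-stability theorem for such matrices there exists a positive diagonal $C$ for which $M(A)C+CM(A)^\top$ is positive definite. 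The pointwise bound $c_j a_{ij}x_ix_j\ge -c_j|a_{ij}|\,|x_i||x_j|$, applied entrywise, then yields for any $x\in\mathbb{R}^n$
\begin{equation*}
x^\top(AC+CA^\top)x\;\ge\;|x|^\top\bigl(M(A)C+CM(A)^\top\bigr)|x|,
\end{equation*}
with $|x|$ the entrywise absolute value; the right-hand side is strictly positive whenever $x\neq \vect 0_n$, so $AC+CA^\top$ is positive definite and \ref{C2} holds for $A$ with the same $C$.

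For the $Z$-matrix case, \ref{C3}$\Leftrightarrow$\ref{C4} is the classical principal-minor characterization of nonsingular M-matrices. For \ref{C4}$\Rightarrow$\ref{C1}, I would exploit the fact that a nonsingular M-matrix has $A^{-1}\ge\mat 0_{n\times n}$ and is invertible, so $e\triangleq A^{-1}\vect 1_n>\vect 0_n$ (no row of $A^{-1}$ can vanish, else $A^{-1}$ would be singular); setting $D=\diag(e)$ gives $AD\vect 1_n=\vect 1_n>\vect 0_n$, which in view of the sign constraint on the off-diagonals of $A$ unfolds row by row into precisely \eqref{eq:ineq_diag_dom}. The main obstacle throughout is the diagonal-stability step invoked in \ref{C1}$\Rightarrow$\ref{C2}: its proof is itself nontrivial, but as a well-established result it can be cited rather than reproved.
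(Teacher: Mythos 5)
The paper does not prove this lemma at all --- it is imported wholesale from \cite[Chapter 6]{berman1979nonnegative_matrices} --- so there is no in-paper argument to compare yours against, only the textbook. Your proof is correct and follows the standard textbook route. For \ref{C2} $\Rightarrow$ \ref{C3}, the observation that the leading $k\times k$ block of $AC+CA^\top$ is $A_kC_k+C_kA_k^\top$, combined with the Lyapunov inequality, places the spectrum of each $A_k$ in the open right half-plane and hence gives $\det(A_k)>0$; this is sound. For \ref{C1} $\Rightarrow$ \ref{C2}, routing through the comparison matrix $M(A)$ is the right move: \ref{C1} transfers to $M(A)$ verbatim, semipositivity makes $M(A)$ a nonsingular $M$-matrix, diagonal Lyapunov stability supplies $C$, and the quadratic-form bound $x^\top(AC+CA^\top)x\geq |x|^\top\bigl(M(A)C+CM(A)^\top\bigr)|x|$ follows from the entrywise estimate you state, since the diagonal contributions coincide and each off-diagonal term satisfies $c_ja_{ij}x_ix_j\geq -c_j|a_{ij}|\,|x_i|\,|x_j|$. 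Closing the cycle for $Z$-matrices via \ref{C3} $\Leftrightarrow$ \ref{C4} and \ref{C4} $\Rightarrow$ \ref{C1} with $D=\diag(A^{-1}\vect 1_n)$ is also correct ($A^{-1}\geq \mat 0_{n\times n}$ and nonsingularity force $A^{-1}\vect 1_n>\vect 0_n$). Two minor remarks. First, your argument leans on three nontrivial cited facts (the semipositivity characterization, diagonal stability of nonsingular $M$-matrices, and the principal-minor characterization), all of which sit in the very chapter the paper cites, so this is an entirely reasonable division of labour rather than a gap. Second, your parenthetical that a $2\times 2$ counterexample to the naive choice $C=D$ needs mixed-sign off-diagonals is not quite right: asymmetry of magnitudes already suffices (e.g.\ $a_{11}=1$, $a_{12}=0$, $a_{21}=-10$, $a_{22}=20$ with $D=I_2$ satisfies \ref{C1} yet $A+A^\top$ has negative determinant); this does not affect the proof.
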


\begin{lemma}\label{lem:1}
	Let $A \in\mathbb R^{n\times n}$ be a matrix such that \ref{C1} holds, with diagonal entries satisfying $a_{ii} > 0,\forall\,i = 1, \hdots, n$. Suppose $B\in\mathbb R^{n\times n}$ is a matrix related to $A$ by
	\begin{eqnarray*}
	b_{ii}&\geq&a_{ii}\\\nonumber
	|b_{ij}|&\leq&|a_{ij}|\,,\;i\neq j
	\end{eqnarray*}
	Then $B$ satisfies \ref{C1} with the same matrix $D$ as used in the defining strict diagonal dominance inequalities for $A$.
\end{lemma}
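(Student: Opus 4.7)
The plan is simple and direct: since \ref{C1} is an inequality that is entry-wise monotone in the natural way (increasing the diagonal entries makes the left-hand side larger, decreasing the magnitudes of off-diagonal entries makes the right-hand side smaller), the diagonal-dominance inequalities that hold for $A$ with the weights $D = \diag(d_1, \dots, d_n)$ must still hold for $B$ with the same weights. There is no need to modify or re-choose $D$.

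Concretely, the steps I would carry out are as follows. First, write down the hypothesis that \ref{C1} holds for $A$, namely $d_i a_{ii} > \sum_{j\neq i} d_j |a_{ij}|$ for each $i$, with $d_j > 0$. Second, invoke the entrywise comparison: since $d_i > 0$ and $b_{ii} \geq a_{ii}$, one has $d_i b_{ii} \geq d_i a_{ii}$; similarly, since $d_j > 0$ and $|b_{ij}| \leq |a_{ij}|$ for $j \neq i$, one has $\sum_{j\neq i} d_j |b_{ij}| \leq \sum_{j\neq i} d_j |a_{ij}|$. Third, chain the inequalities:
\begin{equation*}
d_i b_{ii} \;\geq\; d_i a_{ii} \;>\; \sum_{j\neq i} d_j |a_{ij}| \;\geq\; \sum_{j\neq i} d_j |b_{ij}|,
\end{equation*}
which is precisely the defining inequality \eqref{eq:ineq_diag_dom} for $B$ with the same diagonal matrix $D$. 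Finally, note that $b_{ii} \geq a_{ii} > 0$ ensures the diagonal entries of $B$ are positive, as required in the hypotheses of \ref{C1}. This completes the verification that $B$ satisfies \ref{C1} with the same $D$.

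There is essentially no obstacle here: the lemma is really an observation that \ref{C1} is preserved under the coordinatewise ordering that increases diagonals and decreases off-diagonal magnitudes, and the same $D$ works because $D$ enters linearly on both sides of the inequality with positive weights. The only small subtlety worth stating explicitly in the written proof is that strict inequality is preserved in the chain because the middle step is already strict, while the outer steps are only weak inequalities.
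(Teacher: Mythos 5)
Your proof is correct and takes essentially the same approach as the paper, which simply observes that each diagonal-dominance inequality for $A$ carries over to $B$ with the same $D$ because $b_{ii}\geq a_{ii}$ and $|b_{ij}|\leq|a_{ij}|$; your explicit chain of inequalities is just a more detailed write-up of that one-line observation.
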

The proof is straightforward, and follows by observing that each of the inequalities in \eqref{eq:ineq_diag_dom} holds with $a_{ii}$ and $a_{ij}$ replaced by $b_{ii}$ and $b_{ij}$ respectively, in view of the inequalities in the lemma statement.

\subsection{Generalised Nonlinear Lotka--Volterra Models}\label{ssec:lotka_volterra}

The basic Lotka--Volterra models consider a population of $n$ biological species, with the variable $x_i$ associated to species $i \in \{1, \hdots, n\}$. Typically, $x_i \geq 0$ denotes the population size of species $i$ and has dynamics
\begin{equation}\label{eq:genLVxi}
\dot x_i(t)=d_ix_i(t)+\Big(\sum_{j=1}^n a_{ij}x_j(t)\Big)x_i(t)
\end{equation}
With $x = [x_1, \hdots, x_n]^\top$, the matrix form is given by
\begin{equation}\label{eq:genLV}
\dot x(t)=(D+X(t)A)x(t)
\end{equation}
where $D = \diag(d_1, \hdots, d_n)$ and $X= \diag (x_1, \hdots, x_n)$. For the matrix $A$, there are no a priori restrictions on the signs of the $a_{ij}$, though generally diagonal terms are taken as negative. It is understood that almost exclusively, interest is restricted to systems in \eqref{eq:genLV} with $D$ and $A$ such that $x(0) \in \mathbb{R}^n_{\geq 0}$ implies $x(t) \in \mathbb{R}^n_{\geq 0}$ for all $t\geq 0$. That is, the positive orthant $\mathbb{R}^n_{\geq 0}$ is a positive invariant set of \eqref{eq:genLV}. 

The literature on Lotka-Volterra systems is vast. We note a small number of key aspects. For an introduction, one can consult \cite{takeuchi1996global}. Many behaviours can be exhibited; indeed, \cite{smale1976differential} establishes that an $n$-dimensional Lotka-Volterra system can be constructed with the property that trajectories converge to an $(n-1)$-dimensional linear subspace in which the motion can follow that of \textit{any} $(n-1)$-dimensional system. Since a second-order system can exhibit limit cycles and a third-order system can exhibit limit cycles, strange attractors or chaos, these behaviours can be found in third or fourth (or higher) order Lotka-Volterra systems. The original prey-predator system associated with the names Lotka and Volterra is second order, and can display nonattracting limit cycles, as well as having a saddle point equilibrium and a nonhyperbolic equilibrium, see \cite{vidyasagar2002nonlinear}. 

The original prey-predator system assumes that $a_{12}$ and $a_{21}$ have different signs. Many higher-dimensional Lotka-Volterra systems have mixed signs for the $a_{ij}$ in fact, due to the applications relevance. Nevertheless those for which all $a_{ij}$ are positive (cooperative systems) and all are negative (competitive systems) have enjoyed significant attention. 

A generalization of the Lotka-Volterra system in \eqref{eq:genLVxi} is proposed in \cite{goh1978sector} as
\begin{equation}\label{eq:gohsystem}
\dot x_i(t)=F_i(x_1(t),x_2(t),\dots,x_n(t))x_i(t)
\end{equation}
for $i=1,2,\dots,n$. The $F_i$ are assumed to be at least two times continuously differentiable, and \eqref{eq:genLVxi} is obtained with the identification
\begin{equation*}
F_i(x_1,x_2,\dots,x_n)=d_i+\sum_{j=1}^n a_{ij}x_j.
\end{equation*}
One can write the vector form of \eqref{eq:gohsystem} as
\begin{equation}\label{eq:gohsystem_vector}
\dot{x}(t) = F(x)x,
\end{equation}
where $F = \diag(F_1(x), \hdots, F_n(x))\in \mathbb{R}^{n\times n}$, and we assume \eqref{eq:gohsystem_vector} is such that $x(0) \in \mathbb{R}^n_{\geq 0}$ implies $x(t) \in \mathbb{R}^n_{\geq 0}$ for all $t\geq 0$. It is obvious that \eqref{eq:gohsystem_vector} has the trivial equilibrium $x = \vect 0_n$. We comment now on other equilibria in the positive orthant. A non-trivial equilibrium $\bar x \in \mathbb{R}^n_{> 0}$ is termed feasible. An equilibrium $\bar x \in \mathbb{R}^n_{\geq 0}$ is termed partially feasible if there exist $i, j \in \{1, \hdots, n\}$ such that $\bar x_i > 0$ and $\bar x_j = 0$. We are interested in establishing a condition for the existence and uniqueness of a feasible equilibrium for \eqref{eq:gohsystem_vector}.

If each $F_i$ in \eqref{eq:gohsystem} has the property that it is positive everywhere on the boundary of the positive orthant where $x_i=0$, there can be no stable equilibria on the boundary, and just inside such a boundary, motions will have a component along the inwardly directed normal to the boundary. If the $F_i$ have the further property that whenever $x\in \mathbb R^n_{\geq 0}$ is such that $\Vert x\Vert > R$ for some constant $R$, there holds $\sum_{i=1}^n F_i(x_1,x_2,\dots,x_n)x_i^2<0$, then the motions of \eqref{eq:gohsystem_vector} will be pointed inwards into $\mathbb R^{n}_{\geq 0}\cap \{x: \Vert x\Vert \leq R\}$ when $\Vert x\Vert=R$.  Note that such an assumption on $F_i$ ensures that the population size $x_i(t)$ of species $i$ does not tend to infinity for any $t \leq \infty$, ensuring the population dynamics are well-defined for all time. By using Proposition~\ref{prop:nagumo} it follows that the interior of the set $\mathbb R^{n}_{> 0}\cap \{x: \Vert x\Vert \leq R\}$ is an invariant of the motion. We now use Theorem~\ref{thm:unique} to derive a sufficient condition for \eqref{eq:gohsystem_vector} to have a unique feasible equilibrium.

\begin{theorem}\label{thm:LV_unique}
	Consider the system \eqref{eq:gohsystem_vector} with $F_i\in\mathcal C^\infty$ for all $i = 1, \hdots, n$. Suppose that there exist constants $R, \epsilon > 0$ such that $\mathcal{W} \triangleq \{x: \Vert x\Vert \leq R, x_i \geq\epsilon\,\forall\,i = 1,\hdots, n\}$ is a positive invariant set and $F(x)$ points inward at every $x\in \partial{\mathcal{W}}$. Then, there exists a feasible equilibrium in $\intr(\mathcal{W})$. Suppose further that for any equilibrium point  $\bar x \in \mathcal{W}$:
	\begin{eqnarray}\label{eq:ineq_jacob_goh}
	\frac{\partial F_i(\bar x)}{\partial x_i}&\leq &a_{ii}<0\\\nonumber
	\left|\frac{\partial F_i(\bar x)}{\partial x_j}\right|&\leq &a_{ij}\,,\forall\,i\neq j
	\end{eqnarray}
	for some constant matrix $A$ for which $-A$ satisfies \ref{C3}. Then, there is a unique feasible equilibrium $x^* \in \intr(\mathcal{W})$, and $x^*$ is locally exponentially stable.  
\end{theorem}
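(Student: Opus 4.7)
The plan is to apply Theorem~\ref{thm:unique} on a suitable smoothing of $\mathcal{W}$. Since $\mathcal{W}$ is the intersection of a closed ball with the half-spaces $\{x_i \geq \epsilon\}$, it fails to be smooth where these boundary pieces meet, so I would first round these edges and corners by an arbitrarily small amount exactly as in Section~\ref{ssec:SIS_ph}, producing a smooth, compact, contractible manifold $\tilde{\mathcal{W}} \subset \mathcal{W}$. By continuity, $f(x) = F(x)x$ still points inward on $\partial \tilde{\mathcal{W}}$, and $\tilde{\mathcal{W}}$ contains every equilibrium of interest. The crucial structural simplification is that at any equilibrium $\bar x \in \mathcal{W}$, the lower bound $\bar x_i \geq \epsilon > 0$ forces $F_i(\bar x) = 0$ for every $i$. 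Writing $f_i(x) = F_i(x) x_i$ and differentiating, the Jacobian collapses to $df_{\bar x} = \bar X \, M(\bar x)$, where $\bar X = \diag(\bar x_1, \ldots, \bar x_n)$ is positive diagonal and $M(\bar x)$ is the matrix with entries $\partial F_i / \partial x_j$ evaluated at $\bar x$.

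Next, I would argue that $M(\bar x)$ satisfies condition \ref{C2}. Since the off-diagonal entries of $-A$ are nonpositive (because $a_{ij} \geq |\partial F_i/\partial x_j| \geq 0$ for $i \neq j$), the second half of Lemma~\ref{lem:matrix_conditions} upgrades the hypothesis that $-A$ satisfies \ref{C3} to the fact that $-A$ also satisfies \ref{C1}. I would then apply Lemma~\ref{lem:1} to the pair $(-A, -M(\bar x))$: the given inequalities on $\partial F_i/\partial x_j$ assert precisely that the diagonal entries of $-M(\bar x)$ dominate those of $-A$ and the off-diagonals of $-M(\bar x)$ are bounded in absolute value by those of $-A$. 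Hence $-M(\bar x)$ also satisfies \ref{C1} with the same diagonal scaling, and the first half of Lemma~\ref{lem:matrix_conditions} yields \ref{C2}: there exists a positive diagonal matrix $C$ such that $M(\bar x) C + C M(\bar x)^\top$ is negative definite.

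The crux of the argument is to lift this Lyapunov property of $M(\bar x)$ to a Lyapunov property of $\bar X M(\bar x)$, i.e., to establish D-stability. I would try the positive diagonal Lyapunov matrix $P = \bar X^{-1} C$. Because $\bar X$ and $C$ commute (they are diagonal), a direct computation yields
\begin{equation*}
P (\bar X M) + (\bar X M)^\top P = C M + M^\top C,
\end{equation*}
and an elementwise check gives $C M + M^\top C = M C + C M^\top$, which is negative definite by construction. Hence $df_{\bar x}$ is Hurwitz at every equilibrium in $\tilde{\mathcal{W}}$, and Theorem~\ref{thm:unique} delivers a unique feasible equilibrium $x^* \in \intr(\tilde{\mathcal{W}}) \subset \intr(\mathcal{W})$ together with its local exponential stability. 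For the earlier existence-only claim (which does not assume the Jacobian bound), I would appeal to a topological degree argument: a smooth vector field pointing inward on the boundary of the contractible $\tilde{\mathcal{W}}$ has degree $\chi(\tilde{\mathcal{W}}) = 1$ on the boundary, forcing a zero in the interior without any need for nondegeneracy. The main obstacle I anticipate is precisely the D-stability step: translating the diagonal-Lyapunov condition \ref{C2} for $M(\bar x)$ into the Hurwitz property for $\bar X M(\bar x)$ is not automatic for general $M$, and succeeds here only because the choice $P = \bar X^{-1} C$ exploits the commutativity of diagonal matrices.
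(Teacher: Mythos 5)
Your overall strategy mirrors the paper's: smooth the corners of $\mathcal{W}$, observe that every equilibrium $\bar x\in\mathcal{W}$ has $\bar x>\vect 0_n$ and hence $F_i(\bar x)=0$ so that $df_{\bar x}=\bar X J_F(\bar x)$, use Lemmas~\ref{lem:matrix_conditions} and \ref{lem:1} to obtain a diagonal Lyapunov certificate, and invoke Theorem~\ref{thm:unique}. (Your explicit smoothing of $\mathcal{W}$ and the degree argument for the bare existence claim are fine; the paper uses Brouwer for the latter.) However, the step you yourself flag as the crux contains a genuine error: the ``elementwise check'' $CM+M^{\top}C=MC+CM^{\top}$ is false for a general matrix $M$ and positive diagonal $C$. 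The $(i,j)$ entry of the left side is $c_i m_{ij}+c_j m_{ji}$ while that of the right side is $c_j m_{ij}+c_i m_{ji}$, and these differ whenever $c_i\neq c_j$ and $m_{ij}\neq m_{ji}$ (e.g.\ $C=\diag(1,2)$, $m_{12}=1$, $m_{21}=0$ gives off-diagonal entries $1$ and $2$ respectively). Consequently, with your choice $P=\bar X^{-1}C$ the Lyapunov expression reduces to $CM+M^{\top}C$, whose negative definiteness does not follow from what you actually established, namely $MC+CM^{\top}\prec 0$; the two conditions correspond to diagonal scalings that are inverses of one another.

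The conclusion is nevertheless correct and the repair is minimal: take $P=(\bar X C)^{-1}=\bar X^{-1}C^{-1}$ instead. Then, using commutativity of diagonal matrices, $P(\bar XM)+(\bar XM)^{\top}P=C^{-1}M+M^{\top}C^{-1}=C^{-1}\bigl(MC+CM^{\top}\bigr)C^{-1}\prec 0$ by congruence, so $df_{\bar x}=\bar X M(\bar x)$ is Hurwitz and the rest of your argument goes through. For comparison, the paper sidesteps the D-stability step entirely by folding $\bar X$ in \emph{before} invoking \ref{C1}~$\Rightarrow$~\ref{C2}: since strict diagonal dominance in \eqref{eq:ineq_diag_dom} is a row-wise condition, left-multiplying the strictly diagonally dominant $-J_F(\bar x)D$ by the positive diagonal $\bar X$ preserves it, so \ref{C2} is applied directly to $-\bar X J_F(\bar x)$, whose condition \ref{C2} certificate immediately yields the Hurwitz property. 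Either route works; yours just needs $C^{-1}$ where you wrote $C$.
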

\begin{proof}
	Now because of the $\mathcal{C}^\infty$ assumption on the $F_i$, an argument set out in \cite[Lemma 4.1]{lajmanovich1976SISnetwork} and appealing to Brouwer's Fixed Point Theorem establishes that there is at least one equilibrium point in the convex and compact set $\mathcal{W}$. Notice that all equilibria $\bar x \in \mathcal{W}$ are feasible, satisfying $\bar x > \vect 0_n$. It follows from \eqref{eq:gohsystem_vector} that $F_i(\bar x_1,\bar x_2,\dots, \bar x_n)=0\;\forall i = 1, \hdots, n$.
	
	Let $J_F(x)$ denote the Jacobian of the vector-valued function $\tilde F = [F_1(x), \hdots, F_n(x)]^\top$ evaluated at $x$. The Jacobian of the system \eqref{eq:gohsystem_vector}, denoted $df_x$ to be consistent with the notation in Section~\ref{sec:PH_general}, is computed to be $$df_x = XJ_F(x)+F(x),$$ which at an equilibrium $\bar x \in \mathcal{W}$ is simply 
	\begin{equation}\label{eq:gohsystem_jacob}
	df_{\bar x} = \bar XJ_F(\bar x).
	\end{equation}
	
	First observe that the inequalities in \eqref{eq:ineq_jacob_goh} imply that $a_{ij}\geq 0$ for $i\neq j$ and this means that $-A$ has all off-diagonal entries nonpositive. Lemma~\ref{lem:matrix_conditions} establishes that $-A$ satisfying \ref{C3} (as per the theorem hypothesis) is equivalent to $-A$ being a nonsingular $M$-matrix, and this is in turn equivalent to the existence of a positive diagonal $D$ for which $-AD$ is strictly diagonally dominant. Using Lemma \ref{lem:1} and \eqref{eq:ineq_jacob_goh}, it is then evident that $-J_F(\bar x)D$ is strictly diagonally dominant, and it follows that $-\bar XJ_F(\bar x)D$ is also strictly diagonally dominant. That is, $-\bar XJ_F(\bar x)D$ satisfies \ref{C1}. Lemma~\ref{lem:matrix_conditions} implies that there exists a diagonal positive $C$ for which $$-\left(\bar XJ_F(\bar x)DC+CDJ_F^{\top}(\bar x)\bar X\right)$$ is positive definite (Condition~\ref{C2}). Since $DC=CD$ is a positive definite  matrix, this implies $df_{\bar x} = \bar XJ_F(\bar x)$ has all eigenvalues in the left half plane. The inequality \eqref{eq:ineq_jacob_goh} is assumed to hold for all equilibria $\bar x \in \mathcal{W}$, which implies that $df_{\bar x}$ is Hurwitz for all equilibria $\bar x \in \mathcal{W}$. Theorem~\ref{thm:unique} then establishes that there is in fact a unique feasible equilibrium $x^* \in \intr(\mathcal{W})$, and $x^*$ is locally exponentially stable.
\end{proof}
If in fact \eqref{eq:ineq_jacob_goh} holds for all $x \in \mathcal{W}$, then one has global convergence: $\lim_{t\to\infty} x(t) = x^*$ for all $x(0) \in \mathcal{W}$ exponentially fast. To establish this, first recall that $F_i(x^*) = 0$ for each $i$. Observe that for each $i$ one has
\begin{equation}\label{eq:comparison}
\dot x_i=F_i(x)x_i=\Big(\sum_{j=1}^n\frac{\partial F_i(\tilde x_j)}{\partial x_j}(x_j-x_j^*)\Big)x_i,
\end{equation}
with $\tilde x_j$ taking some value existing by the mean value theorem between $x_j$ and $x_j^*$. Now set $y_i=|x_i-x_i^*|$ for each $i$. It is not hard to verify using \eqref{eq:ineq_jacob_goh} and \eqref{eq:comparison} that 
\begin{align}
\dot y_i & \leq \epsilon\Big(\frac{\partial F_i(\tilde x_i)}{\partial x_i}y_i+\sum_{j\neq i}^n\left|\frac{\partial F_i(\tilde x_j)}{\partial x_j}\right|y_j\Big)\nonumber \\
& \leq \epsilon\Big(a_{ii}y_i+\sum_{j\neq i}^na_{ij}y_j\Big), \label{eq:ineq_y}
\end{align}
where $\epsilon > 0$ is defined in Theorem~\ref{thm:LV_unique}. Let $z = [z_1, \hdots, z_n]^\top$ and $y= [y_1, \hdots, y_n]^\top$. Consider the system
$$ \dot{z}(t) = \epsilon Az(t) \;,\;\;z(0)=y(0).$$
From \eqref{eq:ineq_y}, we obtain that $\dot{y}(t) - \epsilon Ay(t) \leq \dot{z}(t) - \epsilon Az(t)$ for all $t$. The properties of $A$ detailed in Theorem~\ref{thm:LV_unique} allows the main theorem of \cite{walter1971ordinary} to be applied, which establishes that $y(t) \leq z(t)$ for all $t$. Because $A$ is Hurwitz, it follows that $\lim_{t\to\infty} z(t) = \vect 0_n$, and therefore $y_i(t) =|x_i(t)-x_i^*|$ converges to zero, as required. 

\begin{remark}
	It was first established in \cite{goh1978sector} that if \eqref{eq:ineq_jacob_goh} holds for all $x \in \mathcal{W}$, then $\lim_{t\to\infty} x(t) = x^*$ for all $x(0) \in \mathbb{R}^n_{>0} \cap \mathcal{W}$, where $x^*$ is a feasible equilibrium. The uniqueness of $x^*$ was never explicitly proved in \cite{goh1978sector}, but rather implicitly by constructing a complex Lyapunov-like function which simultaneously yielded uniqueness and convergence. Theorem~\ref{thm:LV_unique} relaxes the result of \cite{goh1978sector} in the sense that the inequalities in \eqref{eq:ineq_jacob_goh} are only required to hold when $F_i(x) = 0$ for all $i$ and $x \in \mathcal{W}$, and we explicitly prove the uniqueness property. Moreover, it is conceivable that some forms of \eqref{eq:gohsystem_vector} have a unique feasible equilibrium while still exhibiting chaotic behaviour, limit cycles and other dynamical behaviour associated with Lotka--Volterra systems. We then, separately, recover the global convergence result of \cite{goh1978sector} by a simple argument without requiring Lyapunov-like functions.
\end{remark}



\section{A Discrete-Time Counterpart}\label{sec:DT}

For many processes in the natural and social sciences, both continuous- and discrete-time models exist.  For example, various works have studied discrete-time epidemic models~\cite{allen1994DT_epidemics,prem2020covid} and Lotka--Volterra models~\cite{din2013DT_lotka}. As a consequence, it is natural for systems and control engineers to consider the same set of questions for both continuous- and discrete-time models, such as the uniqueness of the equilibrium. (Whether continuous- or discrete-time models are more appropriate is beyond the scope of this paper, and often depends on the problem context and other factors). In this section, we present a discrete-time counterpart to Theorem~\ref{thm:unique} for the nonlinear system
\begin{equation}\label{eq:general_system_DT}
x(k+1) = G(x(k)), 
\end{equation}
and show an example application on the DeGroot--Friedkin model of a social network \cite{jia2015opinion_SIAM,ye2019DF_journal}. This counterpart result first appeared in \cite[Theorem 3]{anderson2017_lefschetzECC_arxiv}. Note that a point $\bar x$ satisfying $G(\bar x) = \bar x$ is said to be a fixed point of the nonlinear mapping $G$, and $\bar x$ is \textit{an equilibrium} of \eqref{eq:general_system_DT}.  


\begin{theorem}\label{thm:main_lefschetz_result}
	Consider a smooth map $G :X\rightarrow X$ where $X$ is a compact and contractible manifold of finite dimension. Suppose that for all fixed points $\bar x \in X$ of $G$, the eigenvalues\footnote{As defined in Section~\ref{ssec:diff_top}, $dG_x$ is the Jacobian of $G$ in the local coordinates of $x \in X$.} of $dG_{\bar x}$ have magnitude less than 1. Then, $G$ has a unique fixed point $x^* \in X$, and in a local neighbourhood about $x^*$, \eqref{eq:general_system_DT} converges to $x^*$ exponentially fast.
\end{theorem}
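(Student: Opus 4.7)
The plan is to invoke the Lefschetz--Hopf Theorem, which provides the discrete-time analogue of the Poincar\'e--Hopf argument driving Theorem~\ref{thm:unique}. Recall that for a smooth self-map $G : X \to X$ on a compact manifold with isolated, nondegenerate fixed points (in the sense that $I - dG_{\bar x}$ is nonsingular at every fixed point $\bar x$), the Lefschetz--Hopf formula reads
$$L(G) \;=\; \sum_{\bar x : G(\bar x) = \bar x}\, \mathrm{sign}\det(I - dG_{\bar x}),$$
where $L(G)$ is the Lefschetz number of $G$. The overall strategy mirrors that of Theorem~\ref{thm:unique}: convert the spectral hypothesis into a sign-of-determinant statement at every fixed point, evaluate $L(G)$ from the topology of $X$, and equate the two sides to count fixed points.

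First I would translate the spectral assumption into an index calculation. At any fixed point $\bar x$, let $\lambda_1,\dots,\lambda_m$ denote the eigenvalues of $dG_{\bar x}$, each with $|\lambda_i|<1$. Then $\lambda_i \neq 1$ for every $i$, so $I - dG_{\bar x}$ is invertible and $\bar x$ is nondegenerate. Its determinant equals $\prod_i (1-\lambda_i)$: real eigenvalues lie in $(-1,1)$ and contribute a positive factor $1-\lambda_i>0$, while complex eigenvalues come in conjugate pairs contributing $|1-\lambda_i|^2 > 0$. Hence $\det(I - dG_{\bar x}) > 0$, and every fixed point has local index $+1$.

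Next I would compute $L(G)$ using contractibility. A contractible $X$ has the rational homology of a point, so $G_*$ acts as the identity on $H_0(X;\mathbb{Q})\cong\mathbb{Q}$ and trivially on all higher homology. Therefore $L(G) = 1$. Equating this with the sum of $+1$'s in Lefschetz--Hopf forces exactly one fixed point $x^* \in X$, simultaneously yielding existence and uniqueness (existence because an empty sum would give $0 \neq 1$, uniqueness because each contribution is $+1$). Local exponential convergence then follows from the standard discrete-time Linearization Theorem: since $\rho(dG_{x^*}) < 1$, iterates of $G$ starting in a sufficiently small neighbourhood of $x^*$ converge to $x^*$ geometrically.

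The main obstacle I anticipate is the possibility that $X$ has a nonempty boundary, since the clean form of Lefschetz--Hopf assumes fixed points are isolated and interior. This parallels the role played in Theorem~\ref{thm:unique} by the requirement that $f$ point inward on $\partial\mathcal{M}$; here one would either need to supplement the hypotheses so that no fixed point of $G$ lies on $\partial X$ (for example by arranging $G(\partial X)\subset \intr(X)$), or to invoke a version of Lefschetz--Hopf for manifolds with boundary. Once this technicality is settled, the remainder of the argument is a direct discrete-time mirror of the Poincar\'e--Hopf-based proof of Theorem~\ref{thm:unique}.
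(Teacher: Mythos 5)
Your argument is correct and coincides with the proof the paper relies on: the paper defers to \cite{anderson2017_lefschetzECC_arxiv}, where the Lefschetz--Hopf Theorem is applied exactly as you describe --- each nondegenerate fixed point with $\rho(dG_{\bar x})<1$ has index $\sign\det(I-dG_{\bar x})=+1$, while contractibility of $X$ (equivalently, the homotopy between $G$ and the identity map) forces $L(G)=\chi(X)=1$, so there is exactly one fixed point, which is locally exponentially attractive. Your closing remark about the boundary is also apt, since a compact contractible manifold of positive dimension necessarily has nonempty boundary; this is precisely the technicality the cited reference must (and does) address.
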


Rather than present the proof of the theorem, which can be found in \cite{anderson2017_lefschetzECC_arxiv} and requires some additional knowledge and results on the Lefschetz--Hopf Theorem \cite{hirsch2012differential,armstrong2013basic}, we instead provide some comments relating Theorems~\ref{thm:unique} and \ref{thm:main_lefschetz_result}. First, we note that the existence of a homotopy between $G$ and the identity map is central to the proof of Theorem~\ref{thm:main_lefschetz_result}, as detailed in \cite[Theorem 3]{anderson2017_lefschetzECC_arxiv}. Consequently, \cite{anderson2017_lefschetzECC_arxiv} required $X$ to be a compact, oriented and convex manifold or a convex triangulable space of finite dimension so that a specific homotopy between $G$ and the identity map could be constructed.  However, \cite[Theorem 5.19]{armstrong2013basic} identifies that for any compact and contractible $X$, there exists a homotopy between any map $G : X \to X$ and the identity map. Thus, we can relax the hypothesis in Theorem~\ref{thm:main_lefschetz_result} from \cite[Theorem~3]{anderson2017_lefschetzECC_arxiv} to allow for $X$ to be compact and contractible.

\textit{Mutatis mutandis}, Theorems~\ref{thm:unique} and \ref{thm:main_lefschetz_result} are therefore \textit{equivalent}. The requirement in Theorem~\ref{thm:unique} that $df_{\bar x}$ is Hurwitz for all zeroes $\bar x$ of $f(\cdot)$ in \eqref{eq:general_system} is equivalent to the requirement in Theorem~\ref{thm:main_lefschetz_result} that for all fixed points $\bar x$ of $G$ in \eqref{eq:general_system_DT}, $dG_{\bar x}$ has eigenvalues all with magnitude less than 1. 

\subsection*{Application to the DeGroot--Friedkin Model}

In \cite{anderson2017_lefschetzECC_arxiv}, Theorem~\ref{thm:main_lefschetz_result} is applied to the DeGroot--Friedkin model \cite{jia2015opinion_SIAM}, which describes the evolution of individual self-confidence, $x_i(k)$, as a social network of $n \geq 3$ individuals discusses a sequence of issues, $k = 0, 1, 2, \hdots$. We provide a summary of the application here. The map $G$ in question is: 
\begin{align}\label{eq:map_F_DF}
G(  x(k) ) =   \frac{1}{\sum_{i=1}^n \frac{\gamma_i}{1- x_i(k)}} \begin{bmatrix} \frac{\gamma_1}{1-x_1(k)} \\ \vdots \\ \frac{\gamma_n}{1-x_n(k)} \end{bmatrix}      
\end{align} 
where $\gamma_i \in (0, 0.5)$, and $\sum_{i=1}^n \gamma_i = 1$. The compact, convex and oriented manifold of interest is $$\wt{\Delta}_n = \{x_i : \sum_{i=1}^n x_i = 1, 0 < \delta \leq x_i \leq 1-\delta\},$$ where $\delta>0$ is arbitrarily small. One can regard $\wt{\Delta}_n$ as a compact subset in the interior of the $n-1$-dimensional unit simplex, and it can be shown that $G : \wt{\Delta}_n \to \wt{\Delta}_n$ for sufficiently small $\delta$ \cite{jia2015opinion_SIAM,ye2019DF_journal}.

Now, the $G$ in \eqref{eq:map_F_DF} is given with coordinates in $\mathbb{R}^n$, whereas $\tilde{\Delta}_n$ is a manifold of dimension $n-1$. Thus, an appropriate $\mathbb{R}^{n-1}$ coordinate basis is proposed in \cite{anderson2017_lefschetzECC_arxiv}, with an associated map $\tilde G$ \textit{on the manifold} $\wt{\Delta}_n$. Then, \cite{anderson2017_lefschetzECC_arxiv} establishes that the eigenvalues of $d\tilde G_{\bar x}$ at every fixed point $\bar x \in \wt{\Delta}_n$ are all of magnitude less than 1. This is done by showing that the eigenvalues of $d\tilde G_{\bar x}$ are a subset of the eigenvalues of a Laplacian matrix $\mathcal{L}$ associated with a strongly connected graph. It is well known that such a Laplacian has a single zero eigenvalue and all other eigenvalues have positive real part. In fact, \cite{anderson2017_lefschetzECC_arxiv} shows the particular $\mathcal{L}$ has all real eigenvalues, and its trace is 1. Since $n \geq 3$, it immediately follows that all eigenvalues of $\mathcal{L}$, and by implication all eigenvalues of $d\tilde G_{\bar x}$ are less than 1 in magnitude. 

One can then apply Theorem~\ref{thm:main_lefschetz_result} to establish that $\tilde G$ has a unique fixed point $x^*$ in $\wt{\Delta}_n$ (and consequently the $G$ in \eqref{eq:map_F_DF}), and $x^*$ is locally exponentially stable for the system \eqref{eq:general_system_DT}. We refer the reader to \cite[Theorem 4]{anderson2017_lefschetzECC_arxiv} for the details. We conclude by remarking that the first proof of the uniqueness of $x^*$ in $\wt{\Delta}_n$ for $G$ in \eqref{eq:map_F_DF} required extensive and complex algebraic manipulations, see \cite[Appendix F]{jia2015opinion_SIAM}. In comparison, the calculations required to establish the uniqueness of $x^*$ in $\wt{\Delta}_n$ for $G$ in \eqref{eq:map_F_DF} using Theorem~\ref{thm:main_lefschetz_result} are greatly simplified, and may continue to hold for generalisations of \eqref{eq:map_F_DF} as studied in \cite{ye2019_ECC_DF_Distort}.

\section{Conclusions}\label{sec:conclusion}
We have used the Poincar\'e--Hopf Theorem to prove that a nonlinear dynamical system has a unique equilibrium (that is actually locally exponentially stable) if inside a compact and contractible manifold, its Jacobian at every possible equilibrium is Hurwitz. We illustrated the method by applying it to analyse the established deterministic SIS networked model, and an extension that introduces decentralised controllers, rendering the system no longer quadratic. We proved a general impossibility result: if the uncontrolled system has a unique endemic equilibrium, then the controlled system also has a unique endemic equilibrium, which is locally exponentially stable. I.e., the controllers can never globally drive the networked system to the healthy equilibrium. A stronger, almost global convergence result was obtained by extending a result from monotone dynamical systems theory, with the extension relying on the fact that the endemic equilibrium was unique. A generalised nonlinear Lotka--Volterra model was also analysed. Last, a counterpart sufficient condition was presented for a nonlinear discrete-time system to have a unique equilibrium in a compact and contractible manifold.
For future work, we hope expand the analysis framework presented in this paper, and identify further applications, especially focusing on various models within the natural sciences. This includes the introduction of control to other existing epidemic models.

\appendices

\section{Monotone Systems}\label{app:monotone}
A simple introduction to monotone systems is provided, sufficient for the purposes of this paper. A general convergence result is then developed, to be used in Section~\ref{ssec:feedback}. For details, the reader is referred to \cite{smith2008monotone_book,smith1988monotone_survey}. We impose slightly more restrictive conditions than in \cite{smith2008monotone_book,smith1988monotone_survey} for the purposes of maintaining the clarity and simplicity of this section.

To begin, let $m = [m_1, \hdots, m_n]^\top \in \mathbb{R}^n$, with $m_i \in \{0, 1\}$ for $i = 1, \hdots, n$. Then, an orthant of $\mathbb R^n$ can be defined as 
\begin{equation}K_m = \{x\in \mathbb{R}^n : (-1)^{m_i} x_i \geq 0,\,\forall\,i\in \{1, \hdots, n\}\}.
\end{equation}
For a given orthant $K_m \in \mathbb{R}^n$, we write $x \leq_{K_m} y$ and $x <_{K_m} y$ if $y-x \in K_m$ and $y - x \in \intr(K_m)$, respectively.

We consider the system \eqref{eq:general_system} on a convex, open set $U \subseteq \mathbb{R}^n$, and assume that $f$ is sufficiently smooth such that $df_x$ exists for all $x\in U$, and the solution $x(t)$ is unique for every initial condition in $U$. We use $\phi_t(x_0)$ to denote the solution $x(t)$ of \eqref{eq:general_system} with $x(0) = x_0$. If whenever $x_0 ,y_0 \in U$, satisfying $x_0 \leq_{K_m} y_0$, implies $\phi_t(x_0) \leq_{K_m} \phi_t(y_0)$ for all $t\geq 0$ for which both $\phi_t(x_0)$ and $\phi_t(y_0)$ are defined, then the system \eqref{eq:general_system} is said to be a type $K_m$ monotone system and the solution operator $\phi_t(x_0)$ of \eqref{eq:general_system} is said to preserve the partial ordering $\leq_{K_m}$ for $t\geq 0$. The following is a necessary and sufficient condition for \eqref{eq:general_system} to be type $K_m$ monotone, and focuses on the Jacobian $df_x$ of $f(\cdot)$ in \eqref{eq:general_system}.

\begin{lemma}[{Kamke--M\"{u}ller Condition \cite[Lemma 2.1]{smith1988monotone_survey}}]\label{lem:monotone}
	Suppose that $f$ is of class $C^1$ in $U$, where $U$ is open and convex in $\mathbb{R}^n$. Then, $\phi_t(x_0)$ of \eqref{eq:general_system} preserves the partial ordering $\leq_{K_m}$ for $t \geq 0$ if and only if $P_m df_x P_m$ has all off-diagonal entries nonnegative  for every $x \in U$, where $P_m = \diag((-1)^{m_1}, \hdots, (-1)^{m_n})$.
\end{lemma}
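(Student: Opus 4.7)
The plan is to first reduce the general orthant case to the standard positive orthant via a change of variables, and then prove sufficiency and necessity separately. Since $P_m$ has diagonal entries $\pm 1$, it satisfies $P_m^2 = I_n$, so setting $z = P_m x$ transforms $\dot x = f(x)$ into $\dot z = g(z) \triangleq P_m f(P_m z)$. Under this change, $x \leq_{K_m} y$ becomes $P_m x \leq P_m y$ in the standard componentwise order, and the Jacobian $dg_z = P_m\, df_{P_m z}\, P_m$ is exactly the object that the hypothesis asks to be \emph{Metzler} (all off-diagonal entries nonnegative). Thus the problem reduces to showing that the flow of $\dot z = g(z)$ preserves the standard ordering on $\mathbb{R}^n$ if and only if $dg_z$ is Metzler for every $z$ in the transformed domain.

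For sufficiency, I would pick $z_0 \leq w_0$ in $U$ and define $v(t) \triangleq \phi_t(w_0) - \phi_t(z_0)$. Applying the mean value identity $g(w) - g(z) = \bigl(\int_0^1 dg_{z + s(w-z)}\,ds\bigr)(w-z)$ coordinatewise, $v(t)$ satisfies a linear time-varying system $\dot v = A(t) v$ with $A(t)$ the integral average of Metzler matrices, hence Metzler. The heart of the proof is then that any linear Metzler time-varying system leaves $\mathbb{R}^n_{\geq 0}$ positively invariant. My preferred route is a perturbation argument: define $v_\epsilon$ by $\dot v_\epsilon = A(t) v_\epsilon + \epsilon \vect 1_n$ with $v_\epsilon(0) = v(0) + \epsilon \vect 1_n > \vect 0_n$, and suppose for contradiction that $t^* > 0$ is the first time at which some coordinate vanishes, say $v_{\epsilon,i}(t^*) = 0$ with $v_{\epsilon,j}(t^*) \geq 0$ for all $j$. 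Then the Metzler property of $A(t^*)$ gives $\dot v_{\epsilon,i}(t^*) = \sum_{j \neq i} A_{ij}(t^*) v_{\epsilon,j}(t^*) + \epsilon \geq \epsilon > 0$, contradicting that $t^*$ is the first crossing. Sending $\epsilon \downarrow 0$ and using continuous dependence on initial data yields $v(t) \geq \vect 0_n$ for all $t\geq 0$.

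For necessity I would argue by contraposition: suppose $(dg_{z_0})_{ij} < 0$ for some $z_0 \in U$ and some $i \neq j$. Initialising at $z(0) = z_0$ and $w(0) = z_0 + \delta \mathbf{e}_j$ with $\delta > 0$ small gives $z(0) \leq w(0)$. Taylor expanding the difference $w(t) - z(t)$ jointly in $t$ and $\delta$, its $i$-th coordinate equals $\delta t (dg_{z_0})_{ij} + o(\delta t)$, which is strictly negative for sufficiently small positive $t$ and $\delta$. Hence the flow violates the preservation of the ordering near $z_0$, and the Jacobian must be Metzler everywhere in $U$.

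The main obstacle I expect is making the sufficiency direction fully rigorous at the boundary of $\mathbb{R}^n_{\geq 0}$; the first-crossing argument requires some care to ensure $v_\epsilon$ is defined on the same interval as $v$ and that the infimum of crossing times is actually attained. A cleaner alternative I would keep in reserve is to apply Nagumo's theorem (Proposition~\ref{prop:nagumo}) directly to $\dot v = A(t) v$: this reduces to checking that whenever $v \in \partial \mathbb{R}^n_{\geq 0}$ with $v_i = 0$, the $i$-th component of $A(t) v$ is nonnegative, which is immediate from the Metzler property. The change of variables and the linearisation-based contradiction for necessity are essentially mechanical once this core fact is established.
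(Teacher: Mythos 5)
The paper does not prove this lemma at all: it is imported verbatim from Smith's survey (cited as Lemma~2.1 there), so there is no in-paper argument to compare against. Judged on its own, your proof is correct and follows the standard route to the Kamke--M\"{u}ller characterisation. The reduction $z = P_m x$ is clean (using $P_m^2 = I_n$ and the fact that $P_m U$ is again open and convex), the integral mean-value representation $g(w)-g(z) = \bigl(\int_0^1 dg_{z+s(w-z)}\,ds\bigr)(w-z)$ is legitimate precisely because of convexity, and the resulting $A(t)$ is Metzler since the Metzler matrices form a convex cone. Your worry about the interval of definition of $v_\epsilon$ is actually moot: $A(t)$ is determined by the \emph{unperturbed} trajectories $\phi_t(z_0)$, $\phi_t(w_0)$, so $\dot v_\epsilon = A(t)v_\epsilon + \epsilon\vect 1_n$ is a linear ODE with continuous coefficients and its solution exists on the whole interval on which $A$ is defined; the first-crossing time is attained because $\{t : \min_i v_{\epsilon,i}(t)\le 0\}$ is closed and excludes $t=0$, and Gronwall gives $v_\epsilon \to v$ uniformly on compacts. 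The only step I would tighten is the necessity direction: the expression ``$\delta t\,(dg_{z_0})_{ij} + o(\delta t)$'' conflates two limits. Cleaner is to write $w(t)-z(t) = \delta\,\Phi(t)\mathbf{e}_j + o(\delta)$ with $\Phi$ the state-transition matrix of the variational equation, note $\Phi(t) = I_n + t\,dg_{z_0} + o(t)$ so that $(\Phi(t_1))_{ij} < 0$ for some fixed small $t_1 > 0$ (using $i\neq j$), and only then send $\delta \downarrow 0$ to make the $i$-th coordinate negative. With that ordering of quantifiers the contrapositive is airtight. Your fallback via Nagumo would need a time-varying, noncompact version of Proposition~2 as stated in the paper, so the perturbation argument you lead with is the better choice.
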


Many results exist establishing convergence of type $K_m$ monotone systems, with various additional assumptions imposed. Here, we state one which has some stricter assumptions, and then extend it for use in our analysis in Section~\ref{ssec:feedback}. Let $E$ denote the set of equilibria of \eqref{eq:general_system}, and for an equilibrium $e \in E$, the basin of attraction of $e$ is denoted by $B(e)$. We say \eqref{eq:general_system} is an \textit{irreducible} type $K_m$ monotone system if $df_x$ is irreducible for all $x \in U$. 

\begin{lemma}[\hspace{-0.6pt}{\cite[Theorem 2.6]{smith1988monotone_survey}}]\label{lem:monotone_converge}
	Let $\mathcal{M}$ be an open, bounded, and positively invariant set for an irreducible type $K_m$ monotone system \eqref{eq:general_system}. Suppose the closure of $\mathcal{M}$, denoted by $\overline{\mathcal{M}}$, contains a finite number of equilibria.
	Then,
	\begin{equation}
	\bigcup_{e\in E\cap \overline{\mathcal{M}}} \intr(B(e)) \cap \overline{\mathcal{M}}
	\end{equation}
	is open and dense in $\mathcal{M}$.
\end{lemma}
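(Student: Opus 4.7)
The plan is to invoke the classical theory of strongly monotone semiflows. The first step is to observe that the irreducibility hypothesis on $df_x$ upgrades the type $K_m$ monotonicity of Lemma~\ref{lem:monotone} to \emph{strong} monotonicity: whenever $x_0 \leq_{K_m} y_0$ with $x_0 \neq y_0$, one has $\phi_t(x_0) <_{K_m} \phi_t(y_0)$ for all $t > 0$. This follows by applying a Perron--Frobenius-type argument to the variational equation $\dot z = df_{\phi_t(x_0)} z$: the Kamke--M\"uller condition ensures the variational flow preserves the cone $K_m$, and irreducibility then forces the flow to send nonzero ordered vectors strictly into $\intr(K_m)$ after any positive time.

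With strong monotonicity established, I would invoke two workhorses of the subject. The \emph{Convergence Criterion} states that every bounded trajectory that is eventually monotone in $t$ with respect to $\leq_{K_m}$ converges to an equilibrium; boundedness here is automatic from the positive invariance of $\mathcal{M}$ (and hence of $\overline{\mathcal{M}}$, which is compact). The \emph{Limit Set Dichotomy} states that for any two initial conditions $x_0 \leq_{K_m} y_0$ with bounded forward trajectories, either the $\omega$-limits coincide and are a common single equilibrium, or else they are strictly ordered pointwise. Combined with the finiteness of equilibria in $\overline{\mathcal{M}}$, these two results severely constrain the admissible long-term behaviour of trajectories inside $\mathcal{M}$.

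To establish density, given any nonempty open $V \subseteq \mathcal{M}$, I would pick $x_0, y_0 \in V$ with $x_0 <_{K_m} y_0$, which is possible because $\intr(K_m)$ is a nonempty open cone and $V$ is open. A Hirsch-type argument applied to the order interval between $x_0$ and $y_0$ (intersected with $V$) then produces at least one point $z_0$ whose trajectory is eventually monotone, and hence by the Convergence Criterion converges to some equilibrium $e^* \in \overline{\mathcal{M}}$. Strong monotonicity allows us to sandwich a whole neighbourhood of $z_0$ between two trajectories which, via the Limit Set Dichotomy and the finiteness of equilibria, must share the same $\omega$-limit $e^*$; this places $z_0$ in $\intr(B(e^*))$. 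Since $V$ was arbitrary, the stated union meets every open subset of $\mathcal{M}$, giving density. Openness is immediate as a finite union of open sets intersected with $\overline{\mathcal{M}}$.

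The hard part, and the technical heart of the argument, is the Hirsch-type step: showing that inside \emph{every} order interval, a dense subset of initial conditions has eventually monotone trajectories. This is what rules out persistently oscillatory or chaotic $\omega$-limit behaviour on open sets. The standard route is a Baire-category argument exploiting the facts that (i) two strictly ordered strongly monotone trajectories cannot accumulate on a common nonequilibrium orbit, and (ii) the finiteness of equilibria in $\overline{\mathcal{M}}$ prevents the second alternative of the Limit Set Dichotomy from persisting across an open set of initial conditions. Handling this step carefully is where most of the real work lies; all other ingredients above are essentially bookkeeping once strong monotonicity and the two classical principles are in hand.
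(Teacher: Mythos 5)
The paper does not prove this lemma at all: it is quoted verbatim (with citation) as Theorem~2.6 of Smith's 1988 survey, and the appendix only builds on it to prove Proposition~\ref{prop:conv}. So there is no ``paper proof'' to compare against; what you have written is a reconstruction of the classical Hirsch--Smith generic-convergence argument, and as a plan it names the right ingredients: irreducibility of $P_m df_x P_m$ plus the Kamke--M\"uller condition upgrades order preservation to strong monotonicity, and the Convergence Criterion together with the Limit Set Dichotomy, applied along totally ordered arcs, yields density of the union of basin interiors once $E\cap\overline{\mathcal{M}}$ is finite. Two remarks on accuracy. First, the mechanism in your density step is slightly off from the standard route: one does not usually ``produce a point with an eventually monotone trajectory'' inside an arbitrary open $V$; rather, one shows that on a totally ordered compact arc $J\subset V$ the set of non-quasiconvergent points is at most countable (the limit sets of distinct arc points are either equal equilibria or strictly ordered and disjoint, so only countably many can fail to be singleton equilibria), whence convergent points are dense in $J$ because $E$ is finite; then two nearby convergent points of $J$ with the same limit $e$ (pigeonhole over the finitely many equilibria) bound an open order interval contained in $B(e)$, which meets $V$ and certifies $\intr(B(e))\cap V\neq\emptyset$. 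Your ``sandwich'' sentence is the right idea for that last step, but the first half of the paragraph should be phrased in terms of quasiconvergence rather than eventual monotonicity. Second, you correctly flag that the countability/Baire step is the real work and leave it unexecuted; since that step is precisely the content of the cited theorem, your proposal is an outline of the known proof rather than a complete one --- which is consistent with how the paper itself treats the statement.
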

A set $S\subset A$ is dense in $A$ if every point $x \in A$ is either in $S$ or in the closure of $S$. Thus, Lemma~\ref{lem:monotone_converge} states that for an irreducible type $K_m$ monotone system \eqref{eq:general_system}, the system converges to an equilibrium $e \in E\cap \overline{\mathcal{M}}$ for almost all initial conditions in $\mathcal{M}$. There are at most a finite number of nonattractive limit cycles. A stronger result is available, appearing in \cite[Theorem D]{ji1994global_cooperative}, and presented below with a different, simpler proof. The result will be used in Section~\ref{sec:epidemics}.
\begin{proposition}[{c.f. \cite[Theorem D]{ji1994global_cooperative}}]\label{prop:conv}
	Let $\mathcal{M}$ be an open, bounded, convex, and positively invariant set for an irreducible type $K_m$ monotone system \eqref{eq:general_system}. Suppose there is a unique equilibrium $e^* \in  \mathcal{M}$ and no equilibrium in $\overline{\mathcal{M}}\setminus \mathcal{M}$. Then, convergence to $e^*$ occurs for every initial condition in $\mathcal{M}$.
\end{proposition}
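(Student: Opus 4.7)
The plan is to use Lemma~\ref{lem:monotone_converge} to obtain density of the basin of attraction of $e^*$ in $\mathcal{M}$, and then to upgrade this to convergence for \emph{every} initial condition by sandwiching each trajectory between two trajectories that already converge to $e^*$, exploiting that the flow preserves $\leq_{K_m}$.

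First, I would observe that the hypotheses make $E \cap \overline{\mathcal{M}} = \{e^*\}$, which is trivially finite. Lemma~\ref{lem:monotone_converge} therefore applies and gives that $\intr(B(e^*)) \cap \overline{\mathcal{M}}$ is open and dense in $\mathcal{M}$. So ``almost every'' initial condition in $\mathcal{M}$ already lies in the basin of $e^*$; only the exceptional (meagre) set remains to be handled.

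Second, fix an arbitrary $x_0 \in \mathcal{M}$. Since $\mathcal{M}$ is open, a Euclidean ball of radius $r > 0$ around $x_0$ is contained in $\mathcal{M}$. With $P_m = \diag((-1)^{m_1}, \hdots, (-1)^{m_n})$ as in Lemma~\ref{lem:monotone}, the vector $\delta P_m \vect{1}_n$ lies in $\intr(K_m)$ for every $\delta > 0$, because its $i$th component times $(-1)^{m_i}$ equals $\delta > 0$. Picking $\delta > 0$ small enough that both perturbations stay inside the ball yields
\begin{equation*}
x_0 - \delta P_m \vect{1}_n \;<_{K_m}\; x_0 \;<_{K_m}\; x_0 + \delta P_m \vect{1}_n,
\end{equation*}
and both perturbed points lie in $\mathcal{M}$. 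In particular, the open subsets $\{y \in \mathcal{M} : y <_{K_m} x_0\}$ and $\{z \in \mathcal{M} : z >_{K_m} x_0\}$ of $\mathcal{M}$ are nonempty. By the density established in the first step, each must intersect $\intr(B(e^*))$, so I can choose $y_0, z_0 \in \intr(B(e^*)) \cap \mathcal{M}$ satisfying $y_0 <_{K_m} x_0 <_{K_m} z_0$.

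Third, since \eqref{eq:general_system} is type $K_m$ monotone, its solution operator preserves $\leq_{K_m}$, giving
\begin{equation*}
\phi_t(y_0) \;\leq_{K_m}\; \phi_t(x_0) \;\leq_{K_m}\; \phi_t(z_0), \qquad \forall\, t \geq 0.
\end{equation*}
By construction, $\phi_t(y_0) \to e^*$ and $\phi_t(z_0) \to e^*$. Conjugating by $P_m$ turns $\leq_{K_m}$ into the componentwise order on $\mathbb{R}^n_{\geq 0}$, so the standard sandwich theorem applied coordinatewise yields $\phi_t(x_0) \to e^*$. Since $x_0 \in \mathcal{M}$ was arbitrary, this proves the claim.

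I expect the main obstacle to be the construction of the order sandwich, specifically verifying that the order half-neighbourhoods $\{y \in \mathcal{M} : y <_{K_m} x_0\}$ and $\{z \in \mathcal{M} : z >_{K_m} x_0\}$ are nonempty open subsets of $\mathcal{M}$; openness of $\mathcal{M}$ is what secures this. The squeeze argument itself, the appeal to monotonicity, and the direct application of Lemma~\ref{lem:monotone_converge} are all routine once the sandwich is in place, and no invocation of the convexity of $\mathcal{M}$ beyond what is used implicitly in the hypotheses of Lemma~\ref{lem:monotone_converge} appears to be required.
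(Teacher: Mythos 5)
Your proof is correct, and it uses the same key device as the paper's proof: a $K_m$-order sandwich of a trajectory between two trajectories that converge to $e^*$, with the sandwiching points supplied by the density conclusion of Lemma~\ref{lem:monotone_converge} and the openness of $\mathcal{M}$. The difference is purely in the logical framing. The paper argues by contradiction: it invokes the (somewhat informal) observation that the exceptional set left over by Lemma~\ref{lem:monotone_converge} consists of nonattractive limit cycles, takes a point $a$ on such a cycle, sandwiches it between two converging points, and derives a contradiction. You instead apply the sandwich directly to an \emph{arbitrary} $x_0 \in \mathcal{M}$, which makes the argument slightly cleaner and more self-contained, since you never need to characterise what the exceptional set looks like; the explicit verification that the order half-neighbourhoods $\mathcal{M} \cap (x_0 \pm \intr(K_m))$ are nonempty open sets (via $x_0 \pm \delta P_m \vect{1}_n$) is exactly the detail the paper handles with its two small balls $\mathcal{B}_1, \mathcal{B}_2$. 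Both versions rely on positive invariance and boundedness of $\mathcal{M}$ so that $\phi_t(x_0)$ is defined for all $t \geq 0$, and both use only the order-preservation property of the flow in the squeeze step, so your proof is a valid (and arguably preferable) variant of the paper's.
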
	
\begin{proof}
	 First, we remark that an irreducible type $K_m$ monotone system \eqref{eq:general_system} enjoys a stronger monotonicity property; for any $x_1, x_2 \in \mathcal{M}$, one has that \mbox{$x_1 <_{K_m} x_2 \Rightarrow \phi_t(x_1) <_{K_m} \phi_t(x_2)$} for all $t > 0$ \cite{smith2008monotone_book}.
	 
	 In light of Lemma~\ref{lem:monotone_converge}, the proposition is proved if we establish that there does not exist a limit cycle. We argue by contradiction.
	 Let $a$ be a point on such a limit cycle of \eqref{eq:general_system}. Pick two points $\underline{a} \in \mathcal{M}$ and $\bar a \in \mathcal{M}$ satisfying $\underline{a} <_{K_m} a <_{K_m} \bar a$, and observe that there exist two sufficiently small balls $\mathcal{B}_1$ and $\mathcal{B}_2$ surrounding $\underline{a}$ and $\bar a$, respectively, which neither intersect the boundary of $\mathcal{M}$ nor contain $a$, and every point $x \in \mathcal{B}_1$ and $y \in \mathcal{B}_2$ obey $x <_{K_m} a <_{K_m} y$. Since almost every point in $\mathcal{B}_1$ is not in a nonattractive limit cycle, there exists an $x_1 \in \mathcal{B}_1$ such that $\lim_{t\to\infty} \phi_t (x_1) = e^*$. Similarly, there exists a $y_2 \in \mathcal{B}_2$ such that $\lim_{t\to\infty} \phi_t (y_2) = e^*$.  Because $x_1 <_{K_m} a <_{K_m} y_2$, it follows that  $\phi_t(x_1) <_{K_m} \phi_t(a) <_{K_m} \phi_t(y_2)$. Recalling that $\lim_{t\to\infty} \phi_t (x_1) = e^*$ and  $\lim_{t\to\infty} \phi_t (y_2) = e^*$ yields $\lim_{t\to\infty} \phi_t(a) = e^*$. However, this contradicts the assumption that $a$ is a point on a nonattractive limit cycle. 
\end{proof} 


\section{Proof of Lemma~\ref{lem:positive_invariance}}\label{app:lem_pf}
To begin, we prove the first part of the lemma statement. Fixing $i \in \{1, \hdots, n\}$, we need to show that $-\mathbf{e}_i^\top \dot{x} = -\dot{x}_i < 0$ for $x \in P_i$. Now, $x$ satisfies $x_i = \epsilon y_i$ and for $j\neq i$, we have $x_j = \epsilon y_j + z_j$ for some $z_j \geq 0$. From \eqref{eq:SIS_contact_node}, we find
\begin{align}
	\dot{x}_i  
	= & \epsilon\Big(-d_iy_i + \sum_{j=1}^n b_{ij} y_j\Big) - \epsilon^2 y_i \sum_{j=1}^n b_{ij} y_j \nonumber \\
	& \qquad+ (1-\epsilon y_i)\sum_{j=1}^n b_{ij} z_j.
\end{align}
The identity $(-D + B)y = \phi y$ implies $\phi y_i = - d_iy_i + \sum_{j=1}^n b_{ij} y_j$, and substituting this into the above and rearranging yields
\begin{align}\label{eq:dot_x_lemma}
	\dot{x}_i = \epsilon y_i\Big(\phi - \epsilon \sum_{j=1}^n b_{ij} y_j\Big) + (1-\epsilon y_i)\sum_{j=1}^n b_{ij} z_j.
\end{align}
Since $\phi> 0$ is constant and $y_k \leq 1$ for all $k$, there exists a sufficiently small $\epsilon_i > 0$ such that for all $\epsilon \leq \epsilon_i$, the first and second summand on the right of \eqref{eq:dot_x_lemma} are positive and nonnegative, respectively. It follows from \eqref{eq:dot_x_lemma} that  $-\mathbf{e}_i^\top \dot{x} < 0$ for all $x \in P_i$. Repeating the analysis for $i = 1, \hdots n$, it is clear that \eqref{eq:boundary_point1} holds for all $i = 1, \hdots n$, for any positive $\epsilon \leq \min_{i} \epsilon_i$.

Next, fix $i \in \{1, \hdots, n\}$, and consider a point $x \in Q_i$. Now, $\mathbf{e}_i^\top \dot{x} = \dot{x}_i$, and \eqref{eq:SIS_contact_node} yields $	\dot{x}_i = -d_i < 0$, and this holds for all $i = 1, \hdots n$, and thus \eqref{eq:boundary_point2} holds. It follows from Proposition~\ref{prop:nagumo} that for $0 < \epsilon \leq \min_{i} \epsilon_i$, $\mathcal{M}_\epsilon$ is a positive invariant set of \eqref{eq:SIS_contact_network}. \eqref{eq:boundary_point} shows that $\partial\mathcal{M}_{\epsilon}$ is not an invariant set of \eqref{eq:SIS_contact_network}, which implies that $\intr(\mathcal{M}_\epsilon)$ is also a positive invariant set of \eqref{eq:SIS_contact_network}.

To prove the second part of the lemma, consider a point $x(t) \in \partial\Xi_n\setminus \vect 0_n$, at some time $t\geq 0$. If $x_i(t) = 1$ for some $i \in \{1, \hdots, n\}$, then \eqref{eq:SIS_contact_node} yields $\dot{x}_i = -d_i < 0$. Thus, if $x(t) > \vect 0_n$, then obviously $x(t+\kappa_1) \in \mathcal{M}_{\epsilon_{1,x}}$ for some sufficiently small positive $\kappa_1$ and $\epsilon_{1,x}$.

Let us suppose then, that $x(t) \in \partial\Xi_n\setminus \vect 0_n$ has at least one zero entry. Define the set $\mathcal{U}_t \triangleq \{i : x_i(t) = 0, i \in \{1, \hdots, n\}\}$. The lemma hypothesises that $\mathcal{G}$ is strongly connected, which implies that there exists a $k \in \mathcal{U}_t$ such that $x_j(t) > 0$ for some $j \in \mathcal{N}_k$. \eqref{eq:SIS_contact_node} yields $\dot{x}_k = \sum_{l\in \mathcal{N}_k} b_{lk} x_l(t) \geq b_{jk} x_j(t) > 0$. This analysis can be repeated to show that there exists a finite $\kappa_2$ such that $\mathcal{U}_{t+\kappa_2}$ is empty.  It follows that $x(t+\kappa_2) \in \mathcal{M}_{\epsilon_{2,x}}$ for some sufficiently small $\epsilon_{2,x}$.

Since $\partial\Xi_n\setminus \vect 0_n$ is bounded, there exists a finite $\bar \kappa$ and sufficiently small $\epsilon \in (0, \min_{x}\{\epsilon_{1, x}, \epsilon_{2, x}\}]$ such that $x(\bar\kappa) \in \mathcal{M}_{\epsilon}$ for all $x(0) \in \partial\Xi_n\setminus \vect 0_n$. Since $\epsilon$ can be taken to be arbitrarily small, it is also clear that any nonzero equilibrium $x$ must satisfy $x\in \intr(\Xi_n)$. \hfill $\qed$

\section{Proof of Lemma~\ref{lem:new_eqb}}\label{app:lem_pf_eqb}
Let $\bar x^* \in \intr(\Xi_n)$ denote the endemic equilibrium of \eqref{eq:network_SIS_contact_control}, and $\bar H = \diag(h_1(\bar x_1^*), \hdots, h_n(\bar x_n^*))$. Note the assumption that there exists a $j$ such that $x_j > 0 \Rightarrow h_j(x_j) > 0$ implies $\bar H$ is not only a nonnegative diagonal matrix, but has at least one positive entry. Observe that \eqref{eq:network_SIS_contact_control} and 
\begin{equation}\label{eq:pf_1}
\dot x=(-D-\bar H+(I_n-X)B)x
\end{equation}
have the same positive equilibrium $\bar x^*$. By arguments introduced previously, we know that $\bar x^*$ is the only positive equilibrium for \eqref{eq:pf_1}. Hence $s(-D-\bar H+B)>0$ by Proposition~\ref{prop:SIS_network_convergence}. Let $c > 0$ be a sufficiently large constant such that $P(\alpha) = cI_n -D-\alpha\bar H+B$ is nonnegative for all $\alpha \in [0,1]$ (note that $P(\alpha)$ is irreducible). The Perron--Frobenius Theorem \cite{berman1979nonnegative_matrices} yields $s(P(\alpha)) = c + s(-D-\alpha\bar H+B)$. For $\alpha_1 > \alpha_2$, one concludes that $P(\alpha_2)$ is equal to $P(\alpha_1)$ plus some nonnegative diagonal entries (of which at least one is positive), and \cite[Corollary 2.1.5]{berman1979nonnegative_matrices} yields that $s(P(\alpha_1)) < s(P(\alpha_2))$. This implies that $s(-D-\alpha_1\bar H+B) < s(-D-\alpha_2\bar H+B)$. It follows that $s(-D-\alpha\bar H+B)>0$ for all $\alpha\in[0,1]$. Hence the system
\begin{equation}\label{eq:pf_2}
\dot x=(-D-\alpha\bar H+(I_n-X)B)x
\end{equation}
has for all $\alpha\in [0,1]$ a unique equilibrium in $\intr(\Xi_n)$, call it $\bar x_{\alpha}$. Notice that $\bar x_0= x^*$ (the endemic equilibrium of the uncontrolled system \eqref{eq:SIS_contact_network}) and $\bar x_1=\bar x^*$. 

Since $\bar X_{\alpha} = \diag\big((\bar x_\alpha)_1, \hdots, (\bar x_\alpha)_n\big)$ is diagonal, there holds $\big(\frac{d}{d\alpha}\bar X_{\alpha}\big)B\bar x_\alpha = \tilde{B}_\alpha\frac{d\bar x_{\alpha}}{d\alpha}$, where $\tilde{B}_\alpha = \diag\big((B\bar x_\alpha)_1, \hdots, (B\bar x_\alpha)_n\big)$. Then, differentiating $$(-D-\alpha\bar H+(I_n-\bar X_{\alpha})B)\bar x_{\alpha}=0$$ with respect to $\alpha$ yields after some rearranging:
\begin{equation}
-\underbrace{(D+\alpha\bar H-(I_n-\bar X_\alpha)B+\tilde B_{\alpha}))}_{K_{\alpha}}\frac{d\bar x_{\alpha}}{d\alpha}=\bar H\bar x_{\alpha}.
\end{equation}
Using arguments similar to those laid out in the proof of Theorem~\ref{thm:SIS_ph_unique} (see Eq.~\ref{eq:endemic_equib} and below) it can be shown that  $K_\alpha$ is an irreducible, nonsingular $M$-matrix. \cite[Theorem 2.7]{berman1979nonnegative_matrices} yields that $K_{\alpha}^{-1} > \mat 0_{n\times n}$. Next, one can verify that $\bar H\bar x_{\alpha} \geq \vect 0_{n\times n}$ has at least one positive entry since $\bar H$ has at least one positive entry and $\bar x_{\alpha} > \vect 0_n$. This means that $$\frac{d\bar x_{\alpha}}{d\alpha} = -K_{\alpha}^{-1}\bar H\bar x_{\alpha} <\vect 0_n.$$ Integration yields $\bar x_0 = x^*>\bar x^* = \bar x_1$, as claimed.  \hfill $\qed$

\section{Properties of the Tangent Cone}\label{app:tangent_cone}
We now prove the claims below \eqref{eq:tangent_cone}, that identify the relationship between $\mathcal{Z}_{\mathcal{M}}(x)$ and $T_x\mathcal{M}$, for a smooth and compact $m$-dimensional manifold $\mathcal{M}$ embedded in $\mathbb{R}^n$, with $m \leq n$.

To begin, we state the following well known result, which can be established from the material in \cite[pg.~8--10]{guillemin2010differential}.

\begin{proposition}
	The tangent space at a point $x$ in a $m$-dimensional manifold $\mathcal{M}$ embedded in $\mathbb R^n$ is an $m$-dimensional Euclidean space (``hyperplane'' in the case of $m<n$)  in $\mathbb R^n$. 
\end{proposition}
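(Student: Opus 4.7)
The plan is to unpack the standard definition of a smooth embedded manifold and verify that the resulting tangent space is a linear subspace of $\mathbb{R}^n$ of the correct dimension. By definition, since $\mathcal{M}$ is an $m$-dimensional smooth manifold embedded in $\mathbb{R}^n$, for each $x \in \mathcal{M}$ there exists an open set $U \subseteq \mathbb{R}^m$, a point $u_0 \in U$, and a smooth parameterization $\varphi : U \to \mathbb{R}^n$ with $\varphi(u_0) = x$, such that $\varphi(U) \subseteq \mathcal{M}$ is an open neighborhood of $x$ in $\mathcal{M}$ and the Jacobian $d\varphi_{u_0} : \mathbb{R}^m \to \mathbb{R}^n$ has full rank $m$ (the immersion property). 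If $x \in \partial \mathcal{M}$, one replaces $U$ by an open subset of the closed half-space in $\mathbb{R}^m$, but the derivative at $u_0$ still extends to a linear map on all of $\mathbb{R}^m$.

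With this setup in place, I would define $T_x \mathcal{M}$ to be the image of $d\varphi_{u_0}$ in $\mathbb{R}^n$. Because $d\varphi_{u_0}$ is a linear map of rank $m$ between Euclidean spaces, its image is an $m$-dimensional linear subspace of $\mathbb{R}^n$. When $m = n$, this image is all of $\mathbb{R}^n$; when $m < n$, it is a genuine $m$-dimensional hyperplane. That gives the dimensional conclusion directly from elementary linear algebra; no further calculation is required.

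The one nontrivial step, and the principal obstacle, is to show that this construction is independent of the choice of parameterization, so that $T_x \mathcal{M}$ is a well-defined geometric object attached to $x$. For a second parameterization $\tilde\varphi : \tilde U \to \mathbb{R}^n$ with $\tilde\varphi(\tilde u_0) = x$, the transition map $\tilde\varphi^{-1} \circ \varphi$ (defined on a neighborhood of $u_0$ after shrinking $U$ if necessary) is a smooth diffeomorphism between open subsets of $\mathbb{R}^m$, so its Jacobian at $u_0$ is a linear automorphism of $\mathbb{R}^m$. The chain rule then gives $d\tilde\varphi_{\tilde u_0} \circ d(\tilde\varphi^{-1} \circ \varphi)_{u_0} = d\varphi_{u_0}$, from which the images of $d\varphi_{u_0}$ and $d\tilde\varphi_{\tilde u_0}$ coincide as subspaces of $\mathbb{R}^n$. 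Thus $T_x \mathcal{M}$ is an intrinsically defined $m$-dimensional Euclidean subspace (hyperplane if $m < n$) of the ambient $\mathbb{R}^n$, as claimed. All steps follow routinely from the implicit/inverse function theorems and the definition of a smooth embedded manifold in \cite{guillemin2010differential}.
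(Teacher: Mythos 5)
Your proof is correct; the paper does not actually prove this proposition but simply cites \cite[pg.~8--10]{guillemin2010differential}, and your argument (tangent space as the image of the rank-$m$ derivative of a local parameterization, with well-definedness via the chain rule applied to transition maps) is precisely the standard construction found there. Nothing further is needed.
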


The following lemma, which is intuitively obvious, is also required.

\begin{lemma}\label{lem:dist}
	Let $y_1,y_2$ be two points outside a compact set $X$. Then there holds
	\[
	{\rm{dist}}(y_1,X)\leq\|y_1-y_2\| +{\rm{dist}}(y_2,X)
	\]
\end{lemma}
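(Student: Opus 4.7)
The plan is to reduce this to the ordinary triangle inequality in $\mathbb{R}^n$ by exploiting compactness of $X$ to produce an actual nearest point, rather than working with the infimum abstractly.

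First I would invoke compactness of $X$ together with the continuity of the map $z \mapsto \|y_2 - z\|$ to conclude that the infimum defining $\mathrm{dist}(y_2, X)$ is attained: there exists $z^\star \in X$ with $\|y_2 - z^\star\| = \mathrm{dist}(y_2, X)$. This is the one place where the hypothesis that $X$ is compact gets used in an essential way; without it, one would have to work with an $\varepsilon$-approximate minimizer and pass to the limit, but the final inequality would still go through.

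Next I would apply the standard triangle inequality in $\mathbb{R}^n$ to the three points $y_1, y_2, z^\star$, yielding
\[
\|y_1 - z^\star\| \leq \|y_1 - y_2\| + \|y_2 - z^\star\| = \|y_1 - y_2\| + \mathrm{dist}(y_2, X).
\]
Since $z^\star \in X$, the definition of $\mathrm{dist}(y_1, X)$ as an infimum over $X$ immediately gives $\mathrm{dist}(y_1, X) \leq \|y_1 - z^\star\|$, and chaining these two inequalities proves the claim.

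There is essentially no obstacle here; the lemma is a routine ``reverse triangle inequality'' for the set-distance function, and the only subtlety worth flagging in the write-up is that the hypothesis ``$y_1, y_2$ outside $X$'' is not actually needed: the same argument shows the inequality for arbitrary $y_1, y_2 \in \mathbb{R}^n$, since if either point lies in $X$ the corresponding distance is zero and the inequality becomes either the ordinary triangle inequality or trivially true. I would simply note this in a parenthetical remark rather than weaken the hypothesis, to keep the statement aligned with how it is invoked in Appendix~\ref{app:tangent_cone}.
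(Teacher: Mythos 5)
Your proof is correct and is essentially the paper's argument made rigorous: the paper phrases it informally as ``the shortest path from $y_1$ to $X$ constrained to pass through $y_2$ is never shorter than the unconstrained one,'' which is exactly your triangle inequality through the minimizer $z^\star$. Your observations that compactness is only needed to attain the infimum (an $\varepsilon$-argument would do) and that the ``outside $X$'' hypothesis is superfluous are both accurate.
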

\begin{proof}Consider two paths and the associated distances from $y_1$ to $X$. The first is that via the shortest distance path, which is ${\rm{dist}}(y_1,X)$, and the second is that via the shortest path which is constrained to pass through $y_2$. The second is never shorter than the first, which is the content of the inequality.
\end{proof}

\subsection{Manifolds Without Boundary}
In this subsection, we shall assume that the manifold $\mathcal{M}$ does \textit{not} have a boundary. We shall treat the case of manifolds with a boundary in the next subsection. The tangent cone of $\mathcal{M}$ at $x$ is defined in \eqref{eq:tangent_cone}. We shall distinguish  $x \notin \mathcal{M}$ and $x\in \mathcal{M}$, treating these in order. The first result is asserted in \cite{blanchini1999set_invariance} for $m=n$, but that reference does not consider $m < n$.

\begin{proposition}\label{prop:notin}
	With notation as above, suppose $x\notin \mathcal{M}$. Then $\mathcal Z_X(x)= \emptyset$.
\end{proposition}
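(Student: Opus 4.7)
The plan is to exploit compactness of $\mathcal{M}$ to obtain a strictly positive ``escape distance'' from $x$, then use Lemma~\ref{lem:dist} to show this forces the liminf in the definition of the tangent cone to blow up rather than vanish. Specifically, since $\mathcal{M}$ is compact (hence closed) and $x\notin\mathcal{M}$, I would first observe that $d \triangleq \mathrm{dist}(x,\mathcal{M}) > 0$, because the infimum in the definition of distance is attained at some point of $\mathcal{M}$ by continuity of the norm and compactness of $\mathcal{M}$.

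Next, fix an arbitrary $z\in\mathbb{R}^n$; the aim is to show $z\notin\mathcal{Z}_{\mathcal{M}}(x)$. The case $z=\vect 0_n$ is immediate, since $\mathrm{dist}(x+hz,\mathcal{M})/h = d/|h|\to\infty$ as $h\to 0$. For $z\neq\vect 0_n$, Lemma~\ref{lem:dist} applied to $y_1 = x$ and $y_2 = x+hz$ (both of which lie outside $\mathcal{M}$ for $|h|$ small, say $|h|<d/(2\Vert z\Vert)$) yields
\begin{equation*}
d = \mathrm{dist}(x,\mathcal{M}) \;\leq\; \Vert x-(x+hz)\Vert + \mathrm{dist}(x+hz,\mathcal{M}) \;=\; |h|\Vert z\Vert + \mathrm{dist}(x+hz,\mathcal{M}).
\end{equation*}
Rearranging gives $\mathrm{dist}(x+hz,\mathcal{M}) \geq d - |h|\Vert z\Vert \geq d/2$ whenever $|h|<d/(2\Vert z\Vert)$.

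Dividing by $h$, we obtain $\mathrm{dist}(x+hz,\mathcal{M})/|h| \geq d/(2|h|)$, which tends to $+\infty$ as $h\to 0$. In particular the liminf appearing in \eqref{eq:tangent_cone} is $+\infty$, hence strictly positive, so $z\notin\mathcal{Z}_{\mathcal{M}}(x)$. Since $z\in\mathbb{R}^n$ was arbitrary, $\mathcal{Z}_{\mathcal{M}}(x) = \emptyset$.

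There is no real obstacle here: the only subtlety is the justification that $d>0$ (which uses compactness, not just closedness, strictly speaking closedness would suffice, but $\mathcal{M}$ is given as compact) and a brief remark that Lemma~\ref{lem:dist} applies because $x+hz\notin\mathcal{M}$ for all sufficiently small $|h|$, a fact that itself follows from the lower bound on $\mathrm{dist}(x+hz,\mathcal{M})$ just derived.
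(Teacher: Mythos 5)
Your proposal is correct and follows essentially the same route as the paper: establish $d=\mathrm{dist}(x,\mathcal{M})>0$ via compactness, then apply Lemma~\ref{lem:dist} with $y_1=x$ and $y_2=x+hz$ to get $\mathrm{dist}(x+hz,\mathcal{M})\geq d-|h|\Vert z\Vert$, so the quotient in \eqref{eq:tangent_cone} cannot tend to zero. Your version is slightly more careful than the paper's (treating $z=\vect 0_n$ separately and checking that $x+hz\notin\mathcal{M}$ for small $|h|$ so the lemma applies), but the argument is the same.
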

\begin{proof}Observe first that by the compactness of $\mathcal{M}$, $\inf_{y\in \mathcal{M}}\|x-y\|$ is attained at some $\bar y\in \mathcal{M}$. Moreover $\|x-\bar y\|$ is necessarily nonzero, assuming a value $d$ say, since otherwise we would have $x\in \mathcal{M}$, a contradiction. Now use Lemma~\ref{lem:dist}, identifying $y_1$ and $y_2$ with $x$ and $x+hz$ respectively. We have
	\[
	{\rm{dist}}(x+hz,\mathcal{M})\geq{\rm{dist}}(x,\mathcal{M})-|h|\|z\|=d-|h|\|z\|.
	\] 
	It follows that for no $z$, can we have
	
	\[
	\lim_{h\rightarrow 0}\inf\frac{\text{dist}(x+hz,\mathcal{M})}{h}=0.
	\]
	This completes the proof.
\end{proof}

We now turn our focus onto the case $x\in \mathcal{M}$. Our aim is to show that $\mathcal Z_\mathcal{M}(x)=T_x\mathcal{M}$ for all $x\in X$, thus linking the concepts of a tangent cone and a tangent space. We first of all establish a one-way inclusion.

\begin{proposition}\label{prop:oneway}
	With notation as above and $x \in \mathcal{M}$, there holds $T_x\mathcal{M}\subset\mathcal Z_X(x)$.
\end{proposition}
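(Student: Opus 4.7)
The plan is to take any $v \in T_x\mathcal{M}$ and produce a sequence of points in $\mathcal{M}$ that approximate $x + hv$ to second order in $h$, which will force the defining limit in \eqref{eq:tangent_cone} to vanish.

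First, I would invoke the standard characterisation of the tangent space: $v \in T_x\mathcal{M}$ if and only if there exists a smooth curve $\gamma : (-\epsilon_0,\epsilon_0) \to \mathcal{M}$ with $\gamma(0) = x$ and $\dot{\gamma}(0) = v$. Since $\mathcal{M}$ is smooth and has no boundary, such a curve always exists; this is, for example, an immediate consequence of the definition of a tangent vector via equivalence classes of curves, as presented in \cite{guillemin2010differential}. In particular, $\gamma$ is of class $C^1$ (even $C^\infty$ if we use a coordinate chart around $x$ together with a straight line in the chart), and $\gamma(h) \in \mathcal{M}$ for all sufficiently small $|h|$.

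Next, I would apply a first order Taylor expansion of $\gamma$ at $0$. By the smoothness of $\gamma$, there is a constant $C > 0$ and a neighbourhood of $0$ in which
\begin{equation}
\|\gamma(h) - x - h v\| = \|\gamma(h) - \gamma(0) - h\dot{\gamma}(0)\| \leq C h^2.
\end{equation}
Because $\gamma(h) \in \mathcal{M}$, the definition of the distance function yields
\begin{equation}
\mathrm{dist}(x + hv, \mathcal{M}) \;\leq\; \|x + hv - \gamma(h)\| \;\leq\; C h^2
\end{equation}
for all sufficiently small $h > 0$. Dividing by $h$ and passing to the limit gives $\lim_{h\to 0} \inf \tfrac{\mathrm{dist}(x+hv,\mathcal{M})}{h} = 0$, so $v \in \mathcal{Z}_{\mathcal{M}}(x)$ as required.

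I do not expect any real obstacle here: the argument is essentially that tangent vectors are, by construction, realised as velocity vectors of curves in $\mathcal{M}$, and any such curve provides a second-order approximation to the straight line $x + hv$. The only subtle point worth stating explicitly is the existence of the curve $\gamma$ in the absence of a boundary (so that $\gamma(h)$ stays in $\mathcal{M}$ for both positive and negative small $h$); this is precisely why the complementary inclusion $\mathcal{Z}_{\mathcal{M}}(x) \subset T_x\mathcal{M}$, and the boundary case, are treated separately in the paper.
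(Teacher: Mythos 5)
Your proposal is correct and follows essentially the same route as the paper: the paper works with a local parametrisation $\phi$ and the point $\phi(hu)$ where $d\phi_0(u)=z$, which is exactly the curve $\gamma(h)=\phi(hu)$ you construct (a straight line in the chart pushed forward), and both arguments conclude by bounding $\mathrm{dist}(x+hz,\mathcal{M})$ by the distance to that nearby point of $\mathcal{M}$. The only cosmetic difference is that the paper uses the weaker $o(h)$ remainder from the best linear approximation where you invoke a $Ch^2$ Taylor bound; both suffice given smoothness.
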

\begin{proof}Suppose $z \in T_x\mathcal{M}$ but is otherwise arbitrary. Let the smooth function $\phi:U\rightarrow V$ be a local parametrisation around $x$ with $\phi(0)=x$. 
	Then there exists a $u \in \mathbb R^m$ such that $d\phi_0(u)=z$. 
	(In a particular coordinate basis, $d\phi_0(u)$ is simply equal to $Ju$, where $J = d\phi_0$ is the Jacobian matrix of $\phi$ evaluated at $0$, and $u$ is a vector.)  Observe then that 
	\[
	hz=hd\phi_0(u)=d\phi_0(hu).
	\]
	Then for $h$ sufficiently small, 
	\[
	\phi(hu)=\phi(0)+d\phi_0(hu)+o(h)
	\]
	by the ``best linear approximation'' interpretation of $d\phi_0(u)$, c.f.~\cite[pg. 9]{guillemin2010differential}. 
	This means that
	\[
	\phi(hu)=x+d\phi_0(hu)+o(h)
	= x+hz+o(h)
	\]
	Observe that $\phi(hu)$ lies in $\mathcal{M}$. So the distance from $x+hz$ to $\mathcal{M}$ is overbounded by $o(h)$
	This distance divided by $h$ is therefore overbounded by $h^{-1}o(h)$. Hence as $h$ goes to zero, this bound goes to zero. This implies that $z$ is in $\mathcal Z_\mathcal{M}(x)$ by \eqref{eq:tangent_cone}. But $z$ was an arbitrary vector in $T_x\mathcal{M}$. This establishes that $T_x\mathcal{M}$ is contained in $\mathcal Z_\mathcal{M}(x)$.
\end{proof}

It remains to prove the reverse inclusion. In the case of $m=n$, the tangent space corresponds to the whole space in which $\mathcal{M}$ is embedded, and so $\mathbb R^n = T_x\mathcal{M} \subset \mathcal Z_\mathcal{M}(x)\subset \mathbb R^n$, which establishes the claim that the tangent cone and the tangent space coincide. Thus, we now need to focus just on the case $m<n$.

\begin{proposition}\label{prop:otherway}
	With notation as above, and $x\in\mathcal{M}$, there holds $\mathcal Z_\mathcal{M}(x)\subset T_x\mathcal{M}$
\end{proposition}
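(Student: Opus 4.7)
The plan is to unpack the definition of $\mathcal{Z}_\mathcal{M}(x)$ into a convergent sequence of points in $\mathcal{M}$, pull that sequence back through a local parametrization of $\mathcal{M}$ around $x$, and then show by an orthogonal-decomposition argument that $z$ must lie in the range of the linearised parametrization at $0$, which is precisely $T_x\mathcal{M}$. Concretely, fix $z \in \mathcal{Z}_\mathcal{M}(x)$. Compactness of $\mathcal{M}$ lets one attain the infimum defining $\mathrm{dist}(x+hz,\mathcal{M})$ for each $h>0$, so there is a sequence $h_k \downarrow 0$ together with points $y_k \in \mathcal{M}$ with $y_k = x + h_k z + \epsilon_k$ and $\Vert \epsilon_k \Vert = o(h_k)$. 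Since $y_k \to x$, for large $k$ one may write $y_k = \phi(u_k)$ via a smooth local parametrization $\phi : U \to V \subset \mathcal{M}$ with $\phi(0)=x$, $U \subset \mathbb{R}^m$ open, and $u_k \to 0$. The best-linear-approximation expansion (as already used in the proof of Proposition~\ref{prop:oneway}) gives $\phi(u_k)-x = d\phi_0(u_k) + \eta_k$ with $\Vert \eta_k \Vert = o(\Vert u_k \Vert)$, yielding the key identity
\[
d\phi_0(u_k) \;=\; h_k z + \epsilon_k - \eta_k .
\]

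Next I would prove that $\Vert u_k \Vert / h_k$ is bounded along the sequence. Because $\phi$ is an immersion, the linear map $d\phi_0 : \mathbb{R}^m \to \mathbb{R}^n$ is injective, so there is a constant $c > 0$ with $\Vert d\phi_0(u) \Vert \geq c \Vert u \Vert$ for all $u \in \mathbb{R}^m$. Applying this lower bound to the key identity, and using $\Vert \eta_k \Vert \leq \frac{c}{2}\Vert u_k\Vert$ for $k$ large enough to absorb the $\eta_k$-term on the left, gives $\Vert u_k \Vert \leq C\bigl(h_k \Vert z \Vert + o(h_k)\bigr)$ for some constant $C$, as required.

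Finally, decompose $z = z_\parallel + z_\perp$ with $z_\parallel \in T_x\mathcal{M}$ and $z_\perp \in (T_x\mathcal{M})^\perp$ (the orthogonal complement taken inside $\mathbb{R}^n$), and project the key identity onto $(T_x\mathcal{M})^\perp$. Since $d\phi_0(u_k) \in T_x\mathcal{M}$, the left-hand side vanishes, leaving $h_k z_\perp = \epsilon_k^\perp - \eta_k^\perp$. Dividing by $h_k$ and letting $k \to \infty$, both terms on the right tend to zero: the first because $\Vert \epsilon_k \Vert / h_k \to 0$ by construction, and the second because $\Vert \eta_k \Vert / h_k = (\Vert \eta_k \Vert / \Vert u_k \Vert)(\Vert u_k \Vert / h_k) \to 0$ via the boundedness just established. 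Hence $z_\perp = 0$, so $z \in T_x\mathcal{M}$, completing the proof.

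The main obstacle I anticipate is the error-term bookkeeping in the boundedness step for $\Vert u_k \Vert / h_k$: without this control, the remainder $\eta_k = o(\Vert u_k \Vert)$ need not be $o(h_k)$, and the projection argument would collapse. Everything else reduces to standard smooth-submanifold calculus, and the argument works uniformly for any $m \leq n$, so no separate treatment of the equidimensional case $m = n$ is needed.
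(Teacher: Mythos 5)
Your proof is correct and follows essentially the same route as the paper's: a local parametrization $\phi$ with $\phi(0)=x$, the best-linear-approximation expansion, and the orthogonal decomposition of $z$ against $T_x\mathcal{M}$ to force the normal component to vanish (the paper runs this as a contradiction from $z_2\neq 0$, you run it directly on a minimizing sequence). The one genuine improvement is your explicit boundedness step $\Vert u_k\Vert = O(h_k)$, which is what legitimises converting the remainder $o(\Vert u_k\Vert)$ into $o(h_k)$ --- a point the paper's proof writes simply as ``$+\,o(h)$'' without justification.
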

\begin{proof}
	Suppose, in order to obtain a contradiction, that $ z \in \mathcal Z_\mathcal{M}(x)$, while also $z\notin T_x\mathcal{M}$. Set $z=z_1+z_2$, where $z_1\in T_x\mathcal{M}$ and $z_2\neq 0$ is orthogonal to $z_1$. Suppose that $\inf_{y\in X}\|x+hz-y\|$ is attained at $\bar y (h,z)=\phi(\bar u)$ for some $\bar u\in U$, depending on $h$ and $z$. Such a $\bar u$ exists if $|h|$ is sufficiently small. Now, observe that
	\begin{align*}
		\|x+hz-\phi(\bar u)\|& =\|x+hz-(\phi(0)+d\phi_0(\bar u))\|+o(h)\\
		& =\|hz-d\phi_0(\bar u)\|+o(h).
	\end{align*}
	Note that the second equality was obtained by using the `best linear approximation', c.f.~\cite[pg.~9]{guillemin2010differential}. Now, since \mbox{$d\phi_0(\bar u)\in T_x\mathcal{M}$}, it follows that, irrespective of the particular $h$ and thus the particular $\bar u$, there holds
	\[
	\|hz-d\phi_0(\bar u)\|\geq \inf_{\bar z\in T_x\mathcal{M}}\|hz-\bar z\|=\|hz_2\|.
	\]
	Thus, 
	\begin{align*}
		\text{dist}(x+hz,\mathcal{M})& =\inf_{y\in \mathcal{M}}\|x+hz-y\|\\
		& =\|x+hz-\phi(\bar u)\|\geq |h|\|z_2\|+o(h)
	\end{align*}
	From this, it follows that $z\notin \mathcal Z_\mathcal{M}(x)$ because $z_2\neq 0$, which is a contradiction.
\end{proof}

\subsection{Manifolds With Boundary}\label{app:boundary}
We now assume that $\mathcal{M}$, in addition to the properties detailed and the start of Appendix~\ref{app:tangent_cone}, also has a boundary $\partial\mathcal{M}$, and that boundary is also smooth. It is necessary at the outset to modify the definition of the tangent cone in \eqref{eq:tangent_cone} to distinguish directions associated with any value of $x$ on $\partial M$, so that it is defined using one-sided limits:
\begin{equation}\label{eq:tangent_cone_2}
	\mathcal Z_{\mathcal Q}(x)=\left\{z \in \mathbb R^n:\lim_{h\to 0^+}\inf\frac{\text{dist}(x+hz,\mathcal Q)}{h}=0\right\}
\end{equation}
For manifolds without a boundary, the two definitions are evidently equivalent. 

It is straightforward to see that key results of the previous subsection remain valid, with essentially unchanged proofs. In fact:
\begin{enumerate}
	\item 
	Proposition \ref{prop:notin} remains valid;
	\item 
	When $x\in \mathcal M \setminus \partial \mathcal M$, then $T_x\mathcal M=\mathcal T_\mathcal M(x)$; equivalently Propositions \ref{prop:oneway} and \ref{prop:otherway} continue to hold.  
\end{enumerate}

The remaining interest is in the case when $x\in\partial \mathcal M$. In the case where $m=n$ and assuming the boundary is smooth at $x$, the tangent cone is just the tangent half-space but shifted to the origin, as observed explicitly in the reference \cite{blanchini1999set_invariance}. This reference does not actually prove the result, but it is easily obtained using the methods of the previous section, using the adjusted notion of the tangent cone in \eqref{eq:tangent_cone_2}. And indeed, this continues to hold for the case $m\leq n$, rather than just $m=n$.

\ifCLASSOPTIONcaptionsoff
  \newpage
\fi



%
%
%

\bibliographystyle{IEEEtran}
\bibliography{MYE_ANU}

\begin{thebibliography}{10}
\providecommand{\url}[1]{#1}
\csname url@samestyle\endcsname
\providecommand{\newblock}{\relax}
\providecommand{\bibinfo}[2]{#2}
\providecommand{\BIBentrySTDinterwordspacing}{\spaceskip=0pt\relax}
\providecommand{\BIBentryALTinterwordstretchfactor}{4}
\providecommand{\BIBentryALTinterwordspacing}{\spaceskip=\fontdimen2\font plus
\BIBentryALTinterwordstretchfactor\fontdimen3\font minus
  \fontdimen4\font\relax}
\providecommand{\BIBforeignlanguage}[2]{{%
\expandafter\ifx\csname l@#1\endcsname\relax
\typeout{** WARNING: IEEEtran.bst: No hyphenation pattern has been}%
\typeout{** loaded for the language `#1'. Using the pattern for}%
\typeout{** the default language instead.}%
\else
\language=\csname l@#1\endcsname
\fi
#2}}
\providecommand{\BIBdecl}{\relax}
\BIBdecl

\bibitem{shuai2013epidemic_lyapunov}
Z.~Shuai and P.~van~den Driessche, ``Global stability of infectious disease
  models using lyapunov functions,'' \emph{SIAM Journal on Applied
  Mathematics}, vol.~73, no.~4, pp. 1513--1532, 2013.

\bibitem{li2010global_network}
M.~Y. Li and Z.~Shuai, ``Global-stability problem for coupled systems of
  differential equations on networks,'' \emph{Journal of Differential
  Equations}, vol. 248, no.~1, pp. 1--20, 2010.

\bibitem{takeuchi1996global}
Y.~Takeuchi, \emph{{Global Dynamical Properties of Lotka--Volterra
  Systems}}.\hskip 1em plus 0.5em minus 0.4em\relax World Scientific, 1996.

\bibitem{smale1976differential}
S.~Smale, ``{On the Differential Equations of Species in Competition},''
  \emph{Journal of Mathematical Biology}, vol.~3, no.~1, pp. 5--7, 1976.

\bibitem{milnor1997topology}
J.~W. Milnor, \emph{{Topology from the Differentiable Viewpoint}}.\hskip 1em
  plus 0.5em minus 0.4em\relax Princeton University Press, 1997.

\bibitem{belabbas2013_formationstable}
M.~A. {Belabbas}, ``{On Global Stability of Planar Formations},'' \emph{IEEE
  Transactions on Automatic Control}, vol.~58, no.~8, pp. 2148--2153, Aug 2013.

\bibitem{moghadas2004_PHepidemics}
S.~M. Moghadas, ``Analysis of an epidemic model with bistable equilibria using
  the poincar{\'e} index,'' \emph{Applied Mathematics and Computation}, vol.
  149, no.~3, pp. 689--702, 2004.

\bibitem{simsek2007generalized_PH}
A.~Simsek, A.~Ozdaglar, and D.~Acemoglu, ``{Generalized Poincare-Hopf Theorem
  for compact Nonsmooth Regions},'' \emph{Mathematics of Operations Research},
  vol.~32, no.~1, pp. 193--214, 2007.

\bibitem{tang2007_PHcongestion}
A.~{Tang}, J.~{Wang}, S.~H. {Low}, and M.~{Chiang}, ``{Equilibrium of
  Heterogeneous Congestion Control: Existence and Uniqueness},'' \emph{IEEE/ACM
  Transactions on Networking}, vol.~15, no.~4, pp. 824--837, Aug 2007.

\bibitem{christensen2017PH_potentialgames}
F.~Christensen, ``A necessary and sufficient condition for a unique maximum
  with an application to potential games,'' \emph{Economics Letters}, vol. 161,
  pp. 120--123, 2017.

\bibitem{henshaw2015_SVIR_PoincareHopf}
S.~Henshaw and C.~C. McCluskey, ``Global stability of a vaccination model with
  immigration,'' \emph{Electronic Journal of Dierential Equations}, vol.~92,
  pp. 1--10, 2015.

\bibitem{konovalov2010price_PH}
A.~Konovalov and Z.~S{\'a}ndor, ``On price equilibrium with multi-product
  firms,'' \emph{Economic Theory}, vol.~44, no.~2, pp. 271--292, 2010.

\bibitem{rijk1983equilibrium}
F.~J.~A. Rijk and A.~C.~F. Vorst, ``Equilibrium points in an urban retail model
  and their connection with dynamical systems,'' \emph{Regional Science and
  Urban Economics}, vol.~13, no.~3, pp. 383--399, 1983.

\bibitem{varian1975ph_unique}
H.~R. Varian, ``A third remark on the number of equilibria of an economy,''
  \emph{Econometrica}, vol.~43, no. 5-6, p. 985, 1975.

\bibitem{lajmanovich1976SISnetwork}
A.~Lajmanovich and J.~A. Yorke, ``{A Deterministic Model for Gonorrhea in a
  Nonhomogeneous Population},'' \emph{Mathematical Biosciences}, vol.~28, no.
  3-4, pp. 221--236, 1976.

\bibitem{fall2007SIS_model}
A.~Fall, A.~Iggidr, G.~Sallet, and J.-J. Tewa, ``{Epidemiological Models and
  Lyapunov Functions},'' \emph{Mathematical Modelling of Natural Phenomena},
  vol.~2, no.~1, pp. 62--83, 2007.

\bibitem{khanafer2016SIS_positivesystems}
A.~Khanafer, T.~Ba{\c{s}}ar, and B.~Gharesifard, ``Stability of epidemic models
  over directed graphs: A positive systems approach,'' \emph{Automatica},
  vol.~74, pp. 126--134, 2016.

\bibitem{mei2017epidemics_review}
W.~Mei, S.~Mohagheghi, S.~Zampieri, and F.~Bullo, ``On the dynamics of
  deterministic epidemic propagation over networks,'' \emph{Annual Reviews in
  Control}, vol.~44, pp. 116--128, 2017.

\bibitem{vanMeighem2009_virus}
P.~V. Mieghem, J.~Omic, and R.~Kooij, ``Virus spread in networks,''
  \emph{IEEE/ACM Transactions on Networking}, vol.~17, no.~1, pp. 1--14, 2009.

\bibitem{smith1988monotone_survey}
H.~L. Smith, ``{Systems of Ordinary Differential Equations Which Generate an
  Order Preserving Flow. A Survey of Results},'' \emph{SIAM Review}, vol.~30,
  no.~1, pp. 87--113, 1988.

\bibitem{smith2008monotone_book}
------, \emph{{{Monotone Dynamical Systems: An Introduction to the Theory of
  Competitive and Cooperative Systems}}}.\hskip 1em plus 0.5em minus
  0.4em\relax American Mathematical Society: Providence, Rhode Island, 2008,
  vol.~41.

\bibitem{liu2019bivirus}
J.~Liu, P.~E. Par{\'e}, A.~Nedich, C.~Y. Tang, C.~L. Beck, and T.~Ba\c{s}ar,
  ``{Analysis and Control of a Continuous-Time Bi-Virus Model},'' \emph{IEEE
  Transactions on Automatic Control}, vol.~64, no.~12, pp. 4891--4906, Dec.
  2019.

\bibitem{goh1978sector}
B.~S. Goh, ``Sector stability of a complex ecosystem model,''
  \emph{Mathematical Biosciences}, vol.~40, no. 1-2, pp. 157--166, 1978.

\bibitem{anderson2017_lefschetzECC_arxiv}
B.~D.~O. Anderson and M.~Ye, ``{Nonlinear Mapping Convergence and Application
  to Social Networks},'' in \emph{European Control Conference, Limassol,
  Cyprus}, Jun. 2018, pp. 557--562.

\bibitem{hirsch2012differential}
M.~W. Hirsch, \emph{{Differential Topology}}.\hskip 1em plus 0.5em minus
  0.4em\relax Springer Science \& Business Media, 2012, vol.~33.

\bibitem{jia2015opinion_SIAM}
P.~Jia, A.~MirTabatabaei, N.~E. Friedkin, and F.~Bullo, ``{Opinion Dynamics and
  the Evolution of Social Power in Influence Networks},'' \emph{SIAM Review},
  vol.~57, no.~3, pp. 367--397, 2015.

\bibitem{ye2020_IFAC_impossible}
M.~Ye, J.~Liu, B.~D.~O. Anderson, and M.~Cao, ``{Distributed Feedback Control
  on the SIS Network Model: An Impossibility Result},'' in \emph{21st IFAC
  World Congress, Berlin}, 2020.

\bibitem{berman1979nonnegative_matrices}
A.~Berman and R.~J. Plemmons, \emph{{Nonnegative Matrices in the Mathematical
  Sciences}}, ser. Computer Science and Applied Mathematics.\hskip 1em plus
  0.5em minus 0.4em\relax Academic Press: London, 1979.

\bibitem{nowzari2016epidemics}
C.~Nowzari, V.~M. Preciado, and G.~J. Pappas, ``{Analysis and Control of
  Epidemics: A Survey of Spreading Processes on Complex Networks},'' \emph{IEEE
  Control Systems}, vol.~36, no.~1, pp. 26--46, 2016.

\bibitem{blanchini1999set_invariance}
F.~Blanchini, ``Set invariance in control,'' \emph{Automatica}, vol.~35,
  no.~11, pp. 1747--1767, 1999.

\bibitem{ye2019_PH_submit}
\BIBentryALTinterwordspacing
M.~Ye, J.~Liu, B.~D.~O. Anderson, and M.~Cao, ``{Applications of the
  Poincar\'e--Hopf Theorem: Epidemic Models and Lotka--Volterra Systems},''
  provisionally accepted in \emph{IEEE Transactions on Automatic Control},
  2020. [Online]. Available: \url{https://arxiv.org/abs/1911.12985}
\BIBentrySTDinterwordspacing

\bibitem{guillemin2010differential}
V.~Guillemin and A.~Pollack, \emph{{Differential Topology}}.\hskip 1em plus
  0.5em minus 0.4em\relax American Mathematical Soc., 2010, vol. 370.

\bibitem{sastry1999nonlinearbook}
S.~Sastry, \emph{Nonlinear systems: analysis, stability, and control}.\hskip
  1em plus 0.5em minus 0.4em\relax Springer New York, 1999, vol.~10.

\bibitem{khamsi2011metric_space_book}
M.~A. Khamsi and W.~A. Kirk, \emph{{An Introduction to Metric Spaces and Fixed
  Point Theory}}.\hskip 1em plus 0.5em minus 0.4em\relax John Wiley \& Sons,
  2011.

\bibitem{jacquez1993qualitative_compartmental}
J.~A. Jacquez and C.~P. Simon, ``{Qualitative Theory of Compartmental
  Systems},'' \emph{SIAM Review}, vol.~35, no.~1, pp. 43--79, 1993.

\bibitem{qu2009cooperative_book}
Z.~Qu, \emph{{Cooperative Control of Dynamical Systems: Applications to
  Autonomous Vehicles}}.\hskip 1em plus 0.5em minus 0.4em\relax Springer
  Science \& Business Media, 2009.

\bibitem{preciado2014epidemic_optimal}
V.~M. Preciado, M.~Zargham, C.~Enyioha, A.~Jadbabaie, and G.~Pappas, ``{Optimal
  Resource Allocation for Network Protection: A Geometric Programming
  Approach},'' \emph{IEEE Transactions on Control of Network Systems}, vol.~1,
  no.~1, pp. 99--108, 2014.

\bibitem{watkins2016optimal_virus}
N.~J. Watkins, C.~Nowzari, V.~M. Preciado, and G.~J. Pappas, ``{Optimal
  Resource Allocation for Competitive Spreading Processes on Bilayer
  Networks},'' \emph{IEEE Transactions on Control of Network Systems}, vol.~5,
  no.~1, pp. 298--307, 2018.

\bibitem{wan2008control_virus}
Y.~Wan, S.~Roy, and A.~Saberi, ``Designing spatially heterogeneous strategies
  for control of virus spread,'' \emph{IET Systems Biology}, vol.~2, no.~4, pp.
  184--201, 2008.

\bibitem{torres2016sparse_spreading}
J.~A. Torres, S.~Roy, and Y.~Wan, ``{Sparse Resource Allocation for Linear
  Network Spread Dynamics},'' \emph{IEEE Transactions on Automatic Control},
  vol.~62, no.~4, pp. 1714--1728, 2016.

\bibitem{mai2018_supress_epidemic}
V.~S. Mai, A.~Battou, and K.~Mills, ``{Distributed Algorithm for Suppressing
  Epidemic Spread in Networks},'' \emph{IEEE Control Systems Letters}, vol.~2,
  no.~3, pp. 555--560, 2018.

\bibitem{hemila2017zinc}
H.~Hemil{\"a}, ``Zinc lozenges and the common cold: a meta-analysis comparing
  zinc acetate and zinc gluconate, and the role of zinc dosage,'' \emph{Journal
  of Royal Society of Medicine Open}, vol.~8, no.~5, p. 2054270417694291, 2017.

\bibitem{wang2004epidemic_patchy}
W.~Wang and X.-Q. Zhao, ``An epidemic model in a patchy environment,''
  \emph{Mathematical Biosciences}, vol. 190, no.~1, pp. 97--112, 2004.

\bibitem{jin2005effect_patchy}
Y.~Jin and W.~Wang, ``The effect of population dispersal on the spread of a
  disease,'' \emph{Journal of Mathematical Analysis and Applications}, vol.
  308, no.~1, pp. 343--364, 2005.

\bibitem{li2009patchy_SIR}
M.~Y. Li and Z.~Shuai, ``Global stability of an epidemic model in a patchy
  environment,'' \emph{Canadian Applied Mathematics Quarterly}, vol.~17, no.~1,
  pp. 175--187, 2009.

\bibitem{vidyasagar2002nonlinear}
M.~Vidyasagar, \emph{{Nonlinear Systems Analysis}}.\hskip 1em plus 0.5em minus
  0.4em\relax SIAM, 2002, vol.~42.

\bibitem{walter1971ordinary}
W.~Walter, ``{Ordinary Differential Inequalities in Ordered Banach Spaces},''
  \emph{Journal of Differential Equations}, vol.~9, no.~2, pp. 253--261, 1971.

\bibitem{allen1994DT_epidemics}
L.~J.~S. Allen, ``{Some Discrete-Time $SI$, $SIR$, and $SIS$ Eepidemic
  Models},'' \emph{Mathematical biosciences}, vol. 124, no.~1, pp. 83--105,
  1994.

\bibitem{prem2020covid}
K.~Prem, Y.~Liu, T.~W. Russell, A.~J. Kucharski, R.~M. Eggo, N.~Davies,
  S.~Flasche, S.~Clifford, C.~A. Pearson, J.~D. Munday \emph{et~al.}, ``{The
  effect of control strategies to reduce social mixing on outcomes of the
  COVID-19 epidemic in Wuhan, China: a modelling study},'' \emph{The Lancet
  Public Health}, vol.~5, no.~5, pp. 261--270, May 2020.

\bibitem{din2013DT_lotka}
Q.~Din, ``{Dynamics of a discrete Lotka-Volterra model},'' \emph{Advances in
  Difference Equations}, vol. 2013, no.~1, p.~95, 2013.

\bibitem{ye2019DF_journal}
M.~Ye, J.~Liu, B.~D.~O. Anderson, C.~Yu, and T.~Ba{\c{s}}ar, ``{Evolution of
  Social Power in Social Networks with Dynamic Topology},'' \emph{IEEE
  Transaction on Automatic Control}, vol.~63, no.~11, pp. 3793--3808, Nov.
  2018.

\bibitem{armstrong2013basic}
M.~A. Armstrong, \emph{{Basic Topology}}.\hskip 1em plus 0.5em minus
  0.4em\relax Springer Science \& Business Media, 2013.

\bibitem{ye2019_ECC_DF_Distort}
M.~Ye and B.~D.~O. Anderson, ``{Modelling of Individual Behaviour in the
  DeGroot--Friedkin Self-Appraisal Dynamics on Social Networks},'' in
  \emph{European Control Conference, Naples, Italy}, Jun. 2019, pp. 2011--2017.

\bibitem{ji1994global_cooperative}
J.~Ji-Fa, ``On the global stability of cooperative systems,'' \emph{Bulletin of
  the London Mathematical Society}, vol.~26, no.~5, pp. 455--458, 1994.

\end{thebibliography}
%


\begin{IEEEbiography}
[{\includegraphics[width=1in,height=1.25in,clip,keepaspectratio]{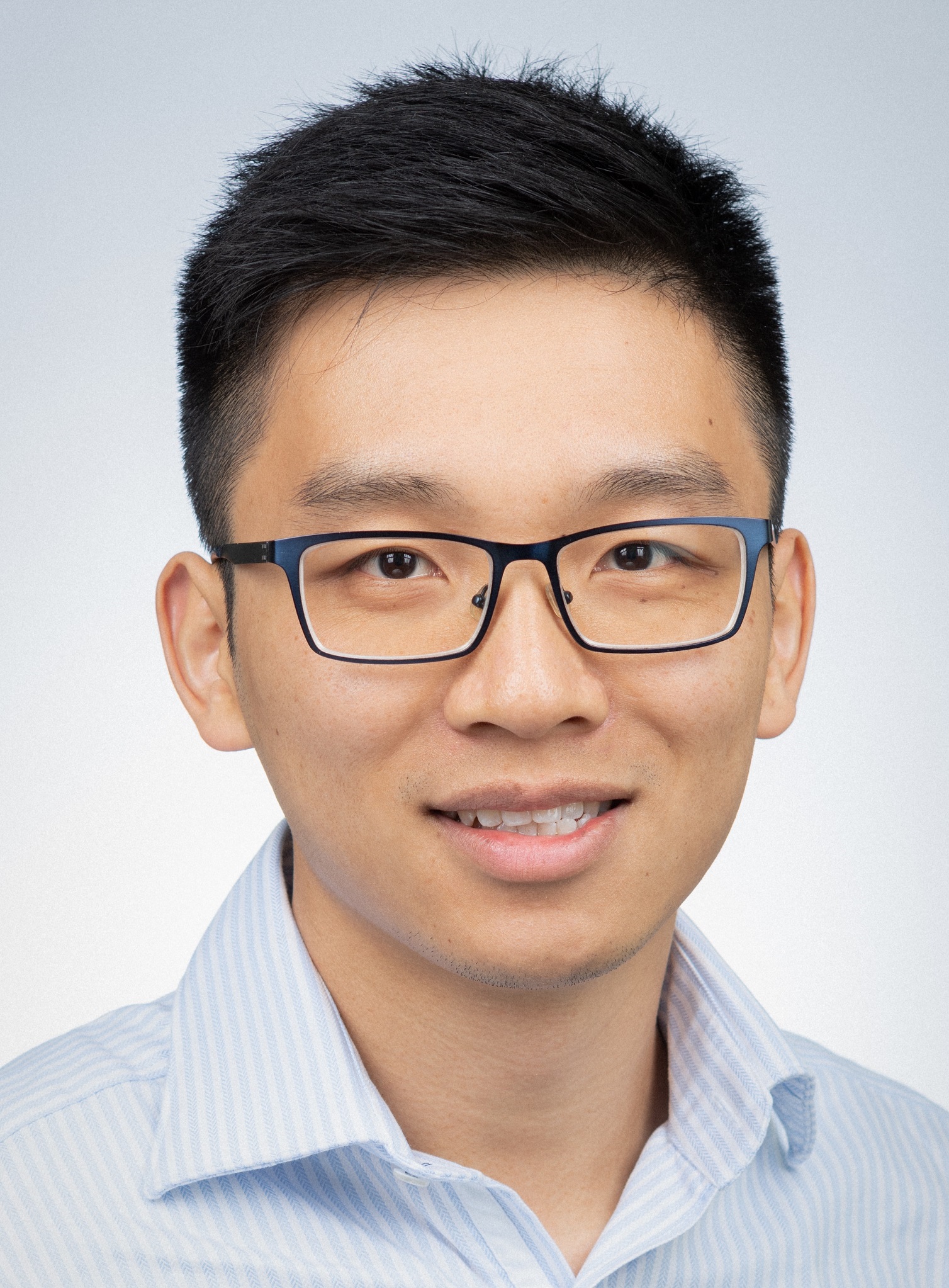}}]{Mengbin Ye} (S'13-M'18)
was born in Guangzhou, China. He received the B.E. degree (with First Class Honours) in mechanical engineering from the University of Auckland, Auckland, New Zealand in 2013, and the Ph.D. degree in engineering at the Australian National University, Canberra, Australia in 2018. From 2018-2020, he was a postdoctoral researcher with the Faculty of Science and Engineering, University of Groningen, Netherlands. Since 2020, he is an Optus Fellow at the Optus--Curtin Centre of Excellence in Artificial Intelligence, Curtin University, Perth, Australia.
	
He has been awarded the 2018 J.G. Crawford Prize (Interdisciplinary), ANU's premier award recognising graduate research excellence. He has also received the 2018 Springer PhD Thesis Prize, and was Highly Commended in the Best Student Paper Award at the 2016 Australian Control Conference. His current research interests include opinion dynamics and decision making in complex social networks, epidemic modelling, coordination of multi-agent systems, and localisation using bearing measurements.
\end{IEEEbiography}

\begin{IEEEbiography}
[{\includegraphics[width=1in,height=1.25in,clip,keepaspectratio]{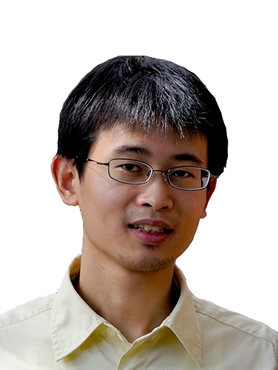}}]{Ji Liu} (S'09-M'13) received the B.S. degree in information engineering from Shanghai Jiao Tong University, Shanghai, China, in 2006, 
and the Ph.D. degree in electrical engineering from Yale University, New Haven, CT, USA, in 2013. He is currently an 
Assistant Professor in the Department of Electrical and Computer Engineering at Stony Brook University, Stony Brook, NY, 
USA. Prior to joining Stony Brook University, he was a Postdoctoral Research Associate at the Coordinated Science 
Laboratory, University of Illinois at Urbana-Champaign, Urbana, IL, USA, and the School of Electrical, Computer and Energy 
Engineering, Arizona State University, Tempe, AZ, USA. His current research interests include distributed control and
computation, distributed optimization and learning, multi-agent systems, social networks, epidemic networks, and 
cyber-physical systems.
\end{IEEEbiography}

\begin{IEEEbiography}
[{\includegraphics[width=1in,height=1.25in,clip,keepaspectratio]{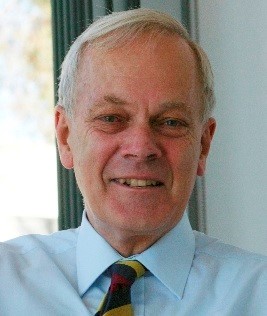}}]{Brian D.O. Anderson} (M'66-SM'74-F'75-LF'07) was born in Sydney, Australia. He received the B.Sc. degree in pure mathematics in 1962, and B.E. in electrical engineering in 1964, from the Sydney University, Sydney, Australia, and the Ph.D. degree in electrical engineering from Stanford University, Stanford, CA, USA, in 1966.

He is an Emeritus Professor at the Australian National University, and a Distinguished Researcher in Data61-CSIRO (previously NICTA) and a Distinguished Professor at Hangzhou Dianzi University. His awards include the IEEE Control Systems Award of 1997, the 2001 IEEE James H Mulligan, Jr Education Medal, and the Bode Prize of the IEEE Control System Society in 1992, as well as several IEEE and other best paper prizes. He is a Fellow of the Australian Academy of Science, the Australian Academy of Technological Sciences and Engineering, the Royal Society, and a foreign member of the US National Academy of Engineering. He holds honorary doctorates from a number of universities, including Universit\'{e} Catholique de Louvain, Belgium, and ETH, Z\"{u}rich. He is a past president of the International Federation of Automatic Control and the Australian Academy of Science. His current research interests are in distributed control, sensor networks and econometric modelling. 
\end{IEEEbiography}

\begin{IEEEbiography}
[{\includegraphics[width=1in,height=1.25in,clip,keepaspectratio]{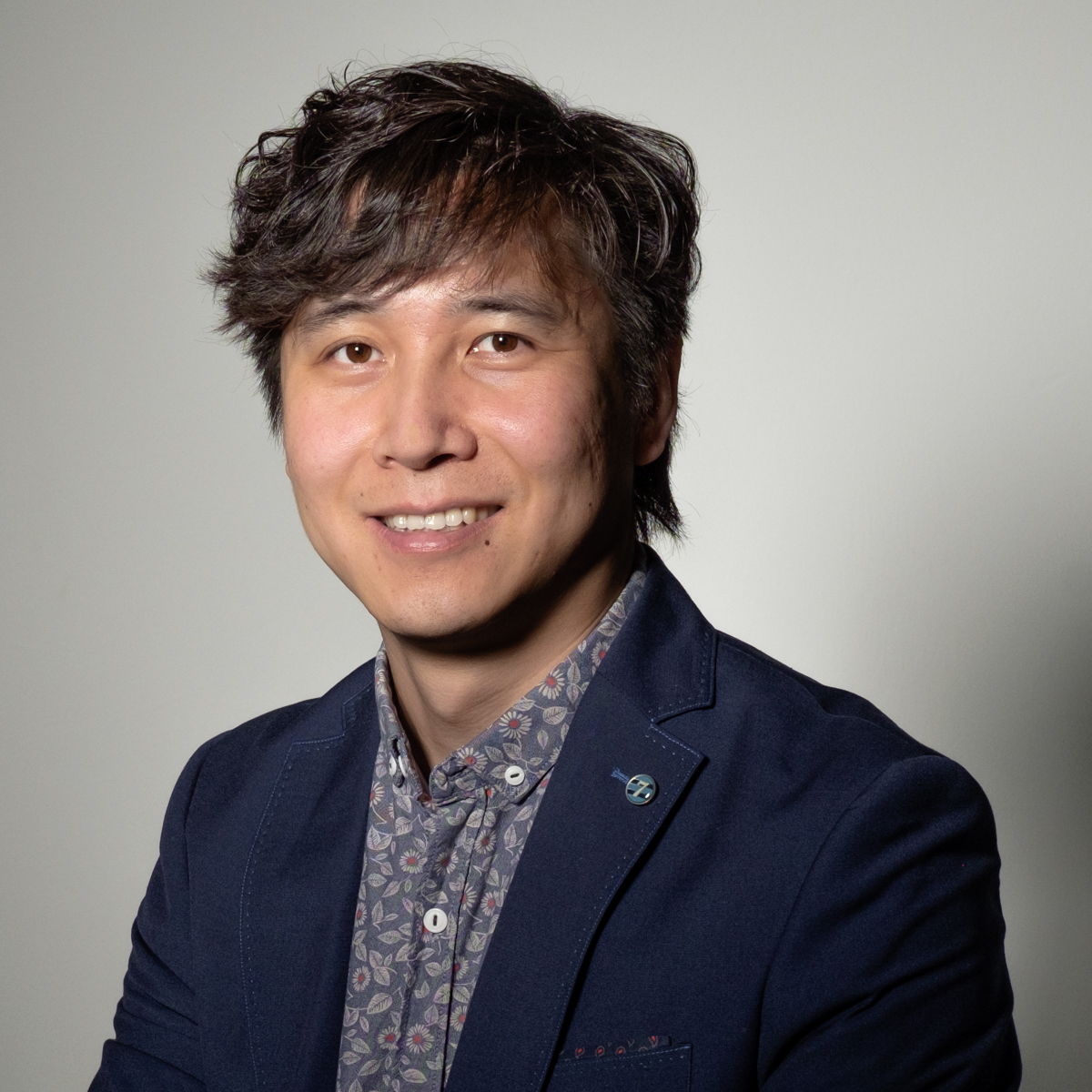}}]{Ming Cao} is  currently  Professor  of  systems  and control  with  the  Engineering  and  Technology  Institute  (ENTEG)  at  the  University  of  Groningen, the  Netherlands,  where  he  started  as  a  tenure-track Assistant Professor in 2008. He received the Bachelor degree in 1999 and the Master degree in 2002
from  Tsinghua  University,  Beijing,  China,  and  the
Ph.D.  degree  in  2007  from  Yale  University,  New
Haven, CT, USA, all in Electrical Engineering. From
September  2007  to  August  2008,  he  was  a  Postdoctoral Research Associate with the Department of Mechanical  and  Aerospace  Engineering  at  Princeton  University,  Princeton, NJ,  USA.  He  worked  as  a  research  intern  during  the  summer  of  2006  with the  Mathematical  Sciences  Department  at  the  IBM  T.  J.  Watson  Research Center,  NY,  USA. 

He  is  the  2017  and  inaugural  recipient  of  the  Manfred Thoma medal from the International Federation of Automatic Control (IFAC) and  the  2016  recipient  of  the  European  Control  Award  sponsored  by  the European  Control Association  (EUCA).  He  is  an  Associate  Editor  for  IEEE
Transactions   on   Automatic   Control,   IEEE   Transactions   on   Circuits   and Systems  and  Systems  and  Control  Letters,  and  for  the  Conference  Editorial Board  of  the  IEEE  Control  Systems  Society.  He  is  also  a  member  of  the IFAC  Technical  Committee  on  Networked  Systems.  His  research  interests include autonomous agents and multi-agent systems, mobile sensor networks and complex networks.
\end{IEEEbiography}




\end{document}